\newtheorem{theorem}{Theorem}[section]
\newtheorem{proposition}{Proposition}[section]
\newtheorem{corollary}{Corollary}[section]
\newtheorem{lemma}{Lemma}[section]
\newtheorem{definition}{Definition}[section]
\newtheorem{remark}{Remark}[section]
\numberwithin{equation}{section}
\numberwithin{lemma}{section}
\numberwithin{theorem}{section}
\numberwithin{definition}{section}
\numberwithin{proposition}{section}
\newcommand{\RR}{\mathbb{R}}
\newcommand{\HH}{\mathbb{H}}
\newcommand{\e}{\epsilon}
\newcommand{\del}{\partial}
\newcommand{\ol}{\overline}
\newcommand{\g}{{\bf g}}
\newcommand{\calC}{{\mathcal C}}
\newcommand{\calD}{{\mathcal D}}
\newcommand{\calE}{{\mathcal E}}
\newcommand{\calM}{{\mathcal M}}
\newcommand{\calO}{{\mathcal O}}
\newcommand{\calR}{{\mathcal R}}
\newcommand{\calS}{{\mathcal S}}
\newcommand{\calU}{{\mathcal U}}
\newcommand{\olg}{\overline{g}}
\newcommand{\rena}{\mathrm{RenA}}
\newcommand{\LipRad}{\mathrm{LipRad}}
\newcommand{\Aring}{\mathring{A}}
\newcommand{\Abring}{\mathring{\bar{A}}}
\newcommand{\Abar}{\bar{A}}
\newcommand{\barg}{\bar{g}}
\newcommand{\EE}{{\mathbb{E}^2}}
\newcounter{mnotecount}[section]
\title{Complete Willmore surfaces in $\HH^3$ with bounded energy:\\ 
boundary regularity and bubbling.}
\author{Spyros Alexakis \thanks{Email: alexakis@math.toronto.edu. 
}\\ University of Toronto\and Rafe Mazzeo \thanks{Email: mazzeo@math.stanford.edu} \\ Stanford University}
\date{}
\begin{document}
\maketitle
\abstract{We study various aspects related to boundary regularity of complete properly embedded Willmore surfaces in $\HH^3$,
particularly those related to assumptions on boundedness or smallness of a certain weighted version of the 
Willmore energy.  We prove, in particular, that small energy 
controls $\calC^1$ boundary regularity.  We examine the possible lack of $\calC^1$ convergence for sequences of surfaces
with bounded Willmore energy and find that the mechanism responsible for this is a bubbling phenomenon, where 
energy escapes to infinity.} 

\section{Introduction}
In our previous paper \cite{AM} we studied the renormalized area, $\rena(Y)$, as a functional on the space of all 
properly embedded minimal surfaces $Y$ in $\HH^3$ with a sufficiently smooth boundary curve at infinity. 
Area or volume renormalization of a properly embedded {\it minimal} submanifold of arbitrary dimension or codimension in
hyperbolic space was introduced by Graham and Witten \cite{GW}; the renormalization is accomplished by 
an Hadamard regularization of the asymptotic expansion of areas (or volumes) of a family of compact truncations
of the submanifold. The renormalized area of such a minimal surface in $\HH^3$ turns out to be a classical quantity. The first
result in \cite{AM} is that 
\begin{equation}
\rena(Y) = -2\pi \chi(Y) - \frac12 \int_Y |\Aring|^2\, d\mu,
\label{defrena}
\end{equation}
where $\chi(Y)$ is the Euler characteristic and $\Aring$ the trace-free second fundamental form of $Y$.  Since $Y$ is minimal, 
$\Aring$ equals the full second fundamental form, and so $\int_Y |\Aring|^2\, d\mu$ is the same as the total curvature 
$\int_Y |A|^2\, d\mu$ of the surface $Y$. In other words, up to the topological contribution, $\rena(Y)$ is essentially
the same as the Willmore energy $\calE(Y):=\int_Y|A|^2\, d\mu$ of $Y$.

We can also relate $\calE(Y)$ to the Willmore energy of $\overline{Y}$, regarded as a surface with boundary in the upper 
half-space $\RR^3_+$ with the Euclidean metric. Indeed, the density $|\Aring|^2\, d\mu$ is well-known to be invariant 
with respect to conformal changes of the ambient space. If $\overline{Y}$ is $\calC^2$ up to its boundary, then it 
meets $\del \RR^3_+$ orthogonally and one may form a closed surface 
by doubling $\overline{Y}$ across this boundary. Decorating all quantities computed with respect to the 
Euclidean metric with bars, then a short calculation using the Gauss-Bonnet theorem yields the relationship
\begin{equation}
\label{gauss-bonnet.appln} 
\int_Y|\Abar|^2 \, d\bar{\mu}=\int_Y |\Abring|^2\, d\bar{\mu} +4\pi\chi(Y)=\int_Y |\Aring|^2\, d\mu +4\pi\chi(Y).
\end{equation}

Based on these observations, it is natural to regard this Willmore energy $\calE$ as a functional on the space of complete minimal surfaces 
in $\HH^3$ (with $\calC^2$ asymptotic boundary); henceforth we refer to this simply as the energy functional. Although minimal 
surfaces are always critical points of $\calE$, it has many other critical points as well. These are the so-called Willmore 
surfaces. The Willmore equation, i.e.\ the Euler-Lagrange equation for $\calE$, is conformally invariant, hence
we may simply talk about Willmore surfaces for either the hyperbolic or the Euclidean metric on the upper half-space.
In any case, the objects of study in this paper are the Willmore surfaces in the upper half-space which meet
$\del \HH^3$ orthogonally, and the energy $\calE$ as a functional on this class of surfaces. 
It is important to note that the Willmore {\it energy} is not conformally invariant. As we prove below,
the finiteness of the (hyperbolic) Willmore energy of a Willmore surface $Y$ necessitates that $Y \perp \del \HH^3$,
so this condition can be dropped. Although we primarily work using
the ambient hyperbolic metric, it is useful to recall the Euclidean perspective occasionally too. 

Our aim is to study sequences $Y_j$ of Willmore surfaces with fixed genus and number of ends, and in particular, to examine 
how the boundedness of $\calE(Y_j)$ regulates convergence of the $Y_j$ at their boundaries. It turns out that the natural energy
functional $\calE$ does not seem to control boundary regularity in a sufficiently strong manner, but that one does obtain
such control using an appropriately weighted version $\calE_p$ of this energy. This new functional is defined by
inserting an extra weight factor in the integral of $|\Aring|^2$:
\[
\calE_p(Y):=\int_Y|\Aring|^2 f^{2p}\, d\mu,
\]
where $p > 1$ is fixed and $f:Y\to \RR_+$ is the intrinsic distance in $Y$ to a given finite collection of points in $Y$, 
called {\it poles}. These poles are in the interior of $Y$ so that near $\del_\infty\HH^3=\{x=0\}$ (in the upper half-space 
model), $f\sim |\log x|$. For brevity we refer to ${\cal E}_p(Y)$ as the {\it weighted energy} of $Y$.

We shall study the following problem: If $Y_j$ is a sequence of Willmore surfaces 
with $\calE_p(Y_j)\le C <\infty$, then does some subsequence of the $Y_j$ converge 
in $\calC^1$ up to the boundary?  In fact, we show that $\calC^1$ convergence may fail at a finite set of points
at the boundary, but we are able to understand this phenomenon via the loss of energy in the limit. Since convergence 
of Willmore surfaces in any compact set of $\HH^3$ is well understood, we focus almost entirely on the behavior of these 
surfaces near and at their asymptotic boundaries. 

Before stating our results, we put this into a broader context.  The study of failure of compactness for variational problems 
goes back at least to \cite{SU} and has now been explored in a wide variety of settings; we refer to \cite{rivICM} for 
a good overview of results and methods. Particularly relevant to our problem are the many deep advances in
understanding the analytic aspects of the Willmore functional; we refer in particular to the fundamental paper of L.\ Simon
\cite{Si}, the more recent work by Kuwert and Sch\"atzle \cite{KuS} and the powerful new approach developed by
Rivi\`ere \cite{riv1}, see also \cite{MR}. However, none of these papers (on Willmore or otherwise), to our knowledge, deal 
with this loss of compactness due to bubbling at the boundary. Often this failure of compactness at the boundary
is excluded by imposing apriori bounds on boundary regularity. Our particular geometric problem presents a natural situation 
where it is unnatural to impose such boundary control, and where this bubbling phenomenon occurs. We note,
however, that regularity at a free boundary for submanifolds with prescribed mean curvature has been studied in \cite{KS}. 

The second context in which to view our work is slightly more tenuous. To explain it we first recall the computation from 
\cite{AM} which gives the first variation of $\calE$ at a minimal surface $Y$. If $\gamma = \del_\infty Y$ is the boundary curve 
at infinity, then there is function $u_3$ associated to $Y$ such that
\begin{equation}
\left.D\, \calE\right|_Y(\psi) = 6 \int_\gamma u_3 \psi_0 \, ds.
\end{equation} 
Here $\psi$ is a Jacobi field along $Y$, i.e.\ an infinitesimal variation of $Y$ amongst minimal surfaces and $\psi_0$ 
its boundary value at $\gamma$, and $s$ is the arclength parameter along $\gamma$.  A case was made in \cite{AM}
that the pair $(\gamma, u_3)$ should be regarded as the Cauchy data of $Y$.  It follows from the basic regularity theory for
such surfaces, due to Tonegawa \cite{To}, that if the `Dirichlet data' $\gamma$ is $\calC^\infty$, then
$\overline{Y}$ is $\calC^\infty$ up to the boundary. Based on classical elliptic theory, one might also expect that
control on the Neumann data, $u_3$, should also yield regularity of $Y$ up to the boundary. In particular, 
if $Y_j$ is a Palais-Smale sequence for $\calE$ (or $\calE_p$), then the functions $u_3^{(j)}$ converge to zero in some 
weak sense, and the question then becomes whether quantitative measures of smallness on these functions yield greater 
control on the boundary curves $\gamma_j$.  We do not emphasize this point of view, however, since it is difficult to make precise. 

\medskip

{\bf Results:}  We first prove an $\e$-regularity result: if the weighted energy of a Willmore surface in a Euclidean half-ball in the upper 
half-space model around some point $P\in\del_\infty Y$ is small, then  the $\calC^1$ norm of the surface is controlled 
uniformly up to the boundary. This has the following analytic content: regarding $Y$ as a horizontal 
graph over a vertical half-plane, then finiteness of the weighted energy is slightly weaker than bounding the graph function 
in $W^{2,2}$, with a logarithmic weight. Hence $\calC^1$ regularity shows 
that this graph function exhibits {\it better} behavior near the boundary than 
would follow from the Sobolev embedding theorem. 
This $\calC^1$ regularity is nearly optimal. Indeed, the energy ${\calE}_p$ is dilation-invariant, since dilations are hyperbolic isometries,
but if we take a blow-down 
limit of a given surface, then the $\calC^{1,\alpha}$ norm of the boundary curve diverges, so we could not expect that
norm to be controlled by the weighted energy.  It is not clear how to characterize the optimal regularity associated 
with finiteness of weighted energy, nor is is it obvious whether there is an optimal weight function that guarantees $\calC^1$ regularity.

One application of this first result is that if $Y_j$ is a sequence of Willmore surfaces with $\calE_p$ bounded, and with well-separated
boundary components, then some subsequence converges  to a Willmore surface $Y_*$, the boundary at infinity of which is a priori 
Lipschitz except at a finite number of bad points. We then show that except possibly at these exceptional points, the limit curve is 
$\calC^1$. This is a gain of regularity compared to Sobolev embedding. 
We note that the convergence of $\gamma_j = \del_\infty Y_j$ to $\gamma_* = \del_\infty Y_*$ need not be $\calC^1$;
in fact we construct counterexamples to this at the end of this paper: Using fairly simple gluing arguments, we obtain
a sequence $Y_j$ with energy $\calE_p(Y_j) \leq C$ which converges to a totally geodesic hemisphere, but where 
the convergence is not $\calC^1$ at a finite number of boundary points.  At each of those points, one sees a 
sequence of increasingly strong blow-downs of a fixed Willmore surface, which carries a fixed positive amount of 
 energy, shrink to a point;  we regard this as a type of bubbling. However, unlike the various `interior' bubbling phenomena 
mentioned earlier  which only occur when the energy is above  a certain threshold, 
in this setting  {\it arbitrarily small} amounts of energy can disappear in these limits. 

Our final result is that the phenomenon exhibited by these examples above is the only mechanism 
through which the convergence $Y_j\to Y_*$ can fail to be $\calC^1$ near the boundary, at least in regions 
of small energy.  In such regions we show that if $P_j\in \gamma_j$, $P_j \to P_* \in \gamma_*$, but the tangent lines
$T_{P_j} \gamma_j$ fail to converge to $T_{P_*} \gamma_*$, then there exist a sequence of hyperbolic
isometries $\varphi_j$ which dilate away from $P_j$ and are such that $\varphi_j(Y_j) \to \tilde{Y}_*$, where 
$\calE(\tilde{Y}_*) > 0$.  Finally, we show that such a bubble of energy (which is already receding to infinity before
applying the dilations $\varphi_j$) carries with it one of the poles used to define the weight function $f$. 
The investigation of regularity gain and bubbling in regions of large energy presents various technical  difficulties 
(some of which are already apparent in \cite{Li}) which are beyond the scope of this paper. 
We intend to return to this in the future. 

We now provide a brief outline of some of the key ideas and arguments in this paper. The preamble to each section 
contains more extensive discussion of the main idea in that section. 

\medskip

{\bf Outline:}   The argument commences in \S 2, where we
prove two ``soft'' results about boundary regularity for Willmore surfaces with finite energy. Together, these 
show that any such surface must meet $\del_\infty \HH^3$ orthogonally and have a good local graphical
representation over a vertical plane provided the boundary curve has a corresponding graphical representation
over a line. This relies only on interior regularity results for Willmore surfaces and simple Morse-theoretic arguments.

\medskip

\noindent {\it $\e$-regularity:} In our first ``hard'' result, we prove that for  (local) Willmore surfaces with sufficiently
small weighted energy,
 one obtains $\calC^1$ control on the boundary curve.  Indeed, we argue that if this were to
fail, then one could construct a sequence of Willmore surfaces, the energies of which vanish in the limit, but such
that there is a jump in the tangent lines in the limit. To reach a contradiction, we wish to relate the slope of the tangent line
at the boundary to information on a parallel curve in the interior of the surface and then use the known $\calC^\infty$
convergence in the interior. 

The relationship between derivative information in the interior and at the boundary, i.e.\ the difference
between the `horizontal' derivatives at height $0$ and $1$, say, is given by integrating the mixed second 
derivative of the graph function along a vertical line and showing that this is controlled by the energy.
To do this we must use a choice of `gauge', which is a special isothermal coordinate system for
which we have explicit pointwise control of the conformal factor. Using some deep results in harmonic 
analysis, such coordinate systems have been obtained for related problems, e.g.\ for embedded spheres
by De Lellis and M\"uller \cite{DLM}, following an earlier and very influential paper by M\"uller and Sverak \cite{MS}, 
see also H\'elein \cite{H};  We must modify those arguments to our setting, which requires a `preparation' of our 
surface in a couple of ways. We first locally graft  our surfaces into a round sphere so that the resulting non-round sphere 
 has two reflection symmetries.  We then apply an appropriate 
M\"obius transformation to normalize the positioning of this surface so as to be in a position to apply
the results in \cite{DLM}, \cite{MS}. Throughout this whole procedure we must be careful that none
of these alterations change the fact that there is a jump in first derivatives at the 
origin. Note that this requires the finiteness of the unweighted energy only.
This boundedness of the conformal factor, along with the boundedness of the weighted energy 
allow us to obtain pointwise control of  the mixed component of the 
2nd derivative of the graph function and bounding its line integral.
This pointwise bound relies on a realization of Willmore surfaces as harmonic maps into the deSitter space; 
a mean value inequality for this map yields a bound on a specific component of the second fundamental form,  
which in turn implies our desired pointwise bounds. It is at this point that the finiteness of the weighted 
Willmore energy is essential. 
In this argument there is a second line integral which it is necessary to control in terms of 
the energy of $Y_j$ in a half-ball. This second line integral (which can be controlled by the regular rather than the weighted energy) 
 plays a crucial role in the later analysis of bubbling.

These arguments occupy \S 3-6. In \S 7 we use the techniques developed up to that point to derive 
the regularity gain for the limit surface $Y_*$ in regions of small energy. 

\medskip

{\it Bubbling:}  Section 8 contains the argument that if the convergence $Y_j\to Y_*$ is not $\calC^1$ at 
some sequence of boundary points $P_j\to P_*\in \del_\infty Y_*$, then we can perform a sequence of 
blowups near those boundary points to produce a sequence of Willmore surfaces $\tilde{Y}_j$ 
which converge to a limit surface $\tilde{Y}_*$ of non-zero energy; prior to the blow-up the surfaces $\tilde{Y}_j$ are
disappearing in the limit towards $P_*$. 
In other words, the $\calC^1$ loss of compactness is due to portions of $Y_j$ with fixed (but arbitrary)
nonzero energy disappearing at infinity.   Unlike similar arguments for bubbling in the interior, 
since our surfaces have infinite area, it is not initially clear that we can find points $Y_j \ni Q_j \to \del_\infty\HH^3$ 
on which $|\Aring|_g$ is bounded below; the rescalings we wish to perform should be centered at such points.
Their existence is proved indirectly, by arguing that it is impossible for all possible blowups near the points 
$P_j\in Y_j$ to converge to surfaces of zero energy. This argument makes essential use of the second line integral 
mentioned above. The key point is to show that this line integral can be controlled by the energy of $Y_j$
in a conical region emanating from (rather than a half-ball containing) $P_j$. 

\medskip
{\bf Further questions:}  There are several questions and problems which are closely related to the themes in
this paper and which seem particularly interesting. We hope to return to some of these soon.

Despite the fact that the problems which led us to the current investigations involve Willmore surfaces of finite weighted 
energy in $\mathbb{H}^3$, one could equaly study Willmore surfaces in the Euclidean ball, 
with boundaries lying on the boundary $\mathbb{S}^2$. In fact, the present work makes clear that
only a weighted version of the traceless part 
$\int_Y|\Aring|^2f^{2p}d\mu$ of the total curvature is needed for our results; 
in view of the conformal invariance of the form $|\Aring|^2d\mu$, 
this suggests that the results here may also hold in a Euclidean ball, assuming an upper bound  on the weighted energy
$\int_Y|\Abring|^2f^{2p}d\ol{\mu}$ and imposing bounds on the angle of intersection between $Y$ and $\mathbb{S}^2 = 
\del B^3$. Indeed, many of the methods developed here transfer to that more general setting with no difficulty. 

Another question, which was a motivation for this work but not studied explicitly here,
concerns the analysis of sequences of Willmore surfaces $Y_j$ which are Palais-Smale for the functional $\calE$. 
Recall that this means that $\calE(Y_j)$ tends to a critical value and $\left. D\calE\right|_{Y_j}$ converges to $0$. 
The goal would be to find critical points for $\calE$. Our results show that critical sequences 
may converge to surfaces with strictly lower genus, and this convergence often occurs only in a weak norm 
at the boundary, but it may still be possible to produce $\calE$-critical surfaces this way.

Finally, one other set of problems we wish to mention involve an analogous though more complicated
problem of studying sequences of Poincar\'e-Einstein metrics in four dimensions. Recall that $(M,g)$ is said
to be Poincar\'e-Einstein if $M$ is a compact manifold with boundary, and $g$ is conformally compact (hence
is complete on the interior of $M$) and Einstein, see \cite{MP, An2} for more details and further references. 
These objects can be studied in any dimension, but it is known that dimension $4$ is critical in the same way that 
dimension $2$ is critical for Willmore surfaces. This is reflected in two formul\ae\ due to Anderson
\cite{An1}: the first is an explicit local integral expression for the renormalized volume of a 
four-dimensional Poincar\'e-Einstein space as the sum of an Euler characteristic and the squared 
$L^2$-norm of the Weyl curvature, while the second describes the differential of renormalized volume with respect
to Poincar\'e-Einstein deformations. These are entirely analogous to (and indeed were the motivatations for) 
the corresponding formul\ae\ here. It is therefore reasonable to ask whether results like the ones here can be proved 
in that Poincar\'e-Einstein setting.  Slightly more generally, reflecting the passage from minimal to Willmore,
one should study these questions in the setting of Bach-flat metrics. More specifically, suppose that $(M^4,g_j)$ is a sequence 
of Poincar\'e-Einstein (or Bach-flat) metrics such that $\int |W_j|^2\, dV_{g_j} \leq C < \infty$, where $W_j$ is the Weyl
tensor of $g_j$. The specific issue is to determine how this uniform energy bound (or some suitably weighted version of
such a bound) controls the regularity of the sequence of conformal infinities of $g_j$. This is related to the questions 
studied by Anderson \cite{An2} and more recently by Chang-Qing-Yang \cite{CQY}. 

\medskip

\subsection{Notation and terminology} 
Almost all of the results below are local, so we always work in the upper half-space model $\RR^3_+$ of $\HH^3$, 
with vertical (height) coordinate $x$, and with linear coordinates $(y, z)$ on $\RR^2 = \{x=0\}$.  

All of the surfaces studied here are assumed to be smooth and Willmore (or minimal, if noted explicitly). We always assume
that any such $Y$ is connected and has closure $\overline{Y} \subset \overline{\HH^3}$, a compact surface with 
boundary curve $\gamma = \del_\infty Y \subset \RR^2$ which is embedded and closed, but possibly disconnected. 
We assume that $\overline{Y}$ is at least $\calC^2$ unless explicitly stated otherwise. 
Since $\HH^3$ has many isometries, including dilation and horizontal ($\RR^2$) translation, it is convenient 
to fix a normalization and scale. We say that $Y$ is normalized if the length of its boundary curve (measured with respect 
to the Euclidean metric on $\RR^2$) satisfies $|\gamma| = 100\pi$ and if the center of mass of $\gamma$ in $\RR^2$ is $0$. 
The class of all Willmore surfaces with $k$ ends and genus $g$, normalized in this way, and which meet $\del \RR^3_+$ 
orthogonally, is denoted $\calM_{k,g}$, and $\calM=\bigcup_{k,g}\calM_{k,g}$. For each $Y\in\calM$, $\overline{Y}$ is the closure 
of $Y$ in $\RR_+^3$. 

Many of the arguments below use the interplay between the metrics $g$ and 
$\bar{g}$ on a surface $Y$ induced from the ambient hyperbolic and Euclidean metrics, respectively. We denote by $A$ 
and $\Abar$, and $\Aring$ and $\Abring$ the corresponding second fundamental form and trace-free second fundamental 
forms of $Y$ and by $d\mu, d\overline{\mu}$ the area elements. As noted earlier, 
\begin{equation}
|\Aring|_g^2 \, d\mu = |\Abring|^2_{\barg}\, d\bar{\mu}.
\label{AAbar}
\end{equation}
If $Y$ is minimal (rather than just Willmore) with respect to $g$, then $A = \Aring$ and 
\begin{equation}
|A|^2_g\, d\mu = |\Aring|^2_g \, d\mu = |\Abring|^2_{\barg} \, d\bar{\mu}
\label{keqkb}
\end{equation}
For brevity, the subscripts $g,\barg$ are often omitted when the meaning is clear. 

For any $Y\in {\cal M}$, the (Willmore) energy of $Y$ equals $\calE(Y):=\int_Y|A|^2\, d\mu$. In all the results below 
we restrict to the subset of surfaces $Y \in \calM_{k,g}$ with $\calE(Y) \leq M$ for some fixed $M < \infty$. 
We prove later that any complete $Y$ with finite energy necessarily meets $\del \HH^3$ orthogonally, so we
can omit this condition from the definition of elements of $\calM$. 

\medskip

\subsection{Results}
As explained earlier, we shall need to consider surfaces which satisfy a slightly stronger condition than finiteness
of Willmore energy. This involves a weighted version of the Willmore energy which we now define. 
\begin{definition}
\label{weight.fn}
Fix a number $N\in\mathbb{N}$. Given any finite set of points $\calO = \{O_1,\dots, O_N\}$, where each $O_j \in Y$, 
let $f_{\calO}(P):={\rm dist}(P, \calO)+5$. 
We call the points $O_k$ the poles of $f_\calO$. If $P\in Y$ and $O_k$ is the pole nearest to $P$, we write $P\sim O_k$.
We frequently write $f$ instead of $f_\calO$ for brevity; thus $f$ is the distance function from {\it some} set of 
$N$ points which may be anywhere  on $Y$. 

Now define the weighted energy 
\[
\calE_{p}(Y, \calO) := \int_Y|\Aring|^2f^{2p}_{\calO}\, d\mu;  
\]
we sometimes write this simply as $\calE_p(Y)$. 
\end{definition}

\begin{definition}
 \label{local.energy}
Fix $Y \in \calM$ and $\gamma = \del_\infty \overline{Y}$. Writing $B(P,R)$ as the open Euclidean half-ball 
centered at $P$ of radius $R$, for any $P \in \gamma$ and $R > 0$, then denote by $Y_{B(P,R)}'$ the path component
of $\overline{Y} \cap B(P,R)$ which contains $P$ in its closure 
and $\gamma'_{B(P,R)} = \overline{Y'_{B(P,R)}} \cap \del \RR^3_+$. 
The weighted and unweighted localized energies of $Y'_{B(P,R)}$ are given by 
\begin{equation}
\calE_p^{B(P,R)}(Y,\calO) :=\int_{Y'_{B(P,R)}} | \Aring|^2 f_\calO^{2p}\, d\mu, \quad \calE^{B(P,R)}(Y) :=\int_{Y'_{B(P,R)}} | A|^2 \, d\mu. 
\label{energy}
\end{equation}
\end{definition}


\begin{definition}
The $\zeta$-{\it Lipschitz radius} of a normalized, closed embedded $\calC^1$ curve $\gamma\subset \RR^2$
is defined as follows.  If $P\in \gamma$ and $\ell_\gamma(P) = T_P \gamma$, then let $\gamma_P
\subset \gamma$ be the largest open connected arc containing $P$ which is a graph over $\ell_\gamma(P)$.  Thus if
$P = 0$ and $\ell_P = \{(0,y,0)\}$, then $\gamma_P = \{(y,f(y)): a < y < b\}$ for some maximal $a < 0 < b$.  
We then define $\LipRad^\zeta_{\gamma}(P)$ to be the largest number $M$ such that $(-M,M)\subset (a,b)$ and 
$\frac{|f(y)-f(y')|}{|y-y'|}<\zeta$ for every $y,y'\in (-M,M)$.  Finally, we set 
\begin{equation}
\LipRad^\zeta(\gamma)=\inf_{P\in \gamma} \LipRad^\zeta_{\gamma}(P).
\end{equation}
Since $\gamma$ is compact, the easily verified lower semicontinuity of $\LipRad^\zeta$ implies that 
the infimum is attained at some point and $\LipRad^\zeta(\gamma)>0$. 
\end{definition}

\begin{theorem}
\label{eregularity}
There is a $\zeta_0 \in (0, 1/20)$ with the property that for any $\zeta \in (0,\zeta_0)$, there exists an $\e(\zeta,p)>0$ 
such that if $Y \in {\cal M}_{k,g}$ and $\calE^{B(P,R)}_p(Y)<\e$ for some $P\in \gamma =\del_\infty Y$ and $R \leq 1$, then
\[
\LipRad^\zeta_\gamma(Q)\ge \zeta\cdot \frac{R-|PQ|}{10}
\]
for all $Q\in \gamma'_{B(P,R)}$. 
\end{theorem}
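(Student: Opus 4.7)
The plan is to argue by contradiction. Suppose the theorem fails. Then there exist sequences $Y_j \in \calM_{k,g}$, points $P_j \in \gamma_j = \del_\infty Y_j$, radii $R_j \in (0,1]$, and points $Q_j \in \gamma'_{j,B(P_j,R_j)}$ with $\calE_p^{B(P_j,R_j)}(Y_j) \to 0$ while $\LipRad^\zeta_{\gamma_j}(Q_j) < \zeta(R_j - |P_jQ_j|)/10$. Since the weighted energy and the Lipschitz radius transform compatibly under hyperbolic dilations centered at boundary points, and are invariant under horizontal translations, I would normalize by translating $P_j$ to the origin and dilating so that the scales $|P_jQ_j|$ and $R_j - |P_jQ_j|$ become order one. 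The failure of graphicality then produces, near $Q_j$, a definite tilt $\geq \zeta$ in the tangent direction of $\gamma_j$ relative to some nominal line; the goal is to rule this out in the limit.

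Next, using the soft regularity results of Section 2, represent $\overline{Y_j}$ locally as a horizontal graph $\{z = u_j(x,y)\}$ over a vertical half-plane, so that $\del_x u_j|_{x=0} = 0$ (orthogonality) and the boundary tangent is encoded by $\del_y u_j(0,y)$. The hypothesized $\zeta$-jump becomes a lower bound $|\del_y u_j(0,y) - \del_y u_j(0,y')| \geq c\zeta$ for a pair of boundary points at fixed separation. The central task is to bound
\[
|\del_y u_j(0,y) - \del_y u_j(1,y)| = \left|\int_0^1 \del_x \del_y u_j(x,y)\,dx\right|
\]
by a quantity going to zero with $\calE_p^{B(P_j,R_j)}(Y_j)$, for then the known $\calC^\infty$ convergence of Willmore surfaces on compact sets of $\HH^3$ forces $\del_y u_j(1,\cdot)$, and hence $\del_y u_j(0,\cdot)$, to converge in $\calC^0$, contradicting the tangent-line jump.

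To pointwise control $\del_x \del_y u_j$ by a component of $\Aring$ in a manner suitable for line integration, I would install an isothermal-coordinate gauge with explicit pointwise control on the conformal factor, in the spirit of M\"uller--Sverak, H\'elein, and De Lellis--M\"uller. This requires preparing the surface: first graft a suitable piece of $\overline{Y_j}$ into a round $2$-sphere so that the resulting closed non-round sphere inherits two reflection symmetries (using only finiteness of the \emph{unweighted} energy), then apply an ambient M\"obius normalization to place it in the configuration required by those theorems. Both preparations must be arranged so that the putative tangent-line jump at the origin is preserved. With the gauge in hand, realize the Willmore surface as a harmonic map into de Sitter space and apply a mean value inequality to obtain a pointwise bound on the relevant component of $\Aring$ by its $L^2$ mass on a small geodesic ball. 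The role of the weight $f^{2p}$ with $p>1$ is now exactly to make the integral $\int_0^1 (\text{pointwise bound})\,dx$ converge: since $f \sim |\log x|$ near $x=0$, the weighted energy dominates the line integral by Cauchy--Schwarz with the right power, yielding the desired vanishing. Along the way I would track a second line integral, controlled by the unweighted energy alone, which will be needed for the bubbling analysis of Section 8.

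The main obstacle will be the gauge-construction step. Producing isothermal coordinates with pointwise controlled conformal factor on a surface with boundary near $\del_\infty \HH^3$ is not available off the shelf, and the grafting/M\"obius preparation must be choreographed carefully: the grafting must respect the vertical structure near the boundary, the M\"obius step must not destroy the orthogonal intersection condition in a damaging way, and crucially neither step may wash out the $\zeta$-jump in tangent lines that drives the contradiction. Once that machinery is in place, the translation between the pointwise bound on $\Aring$ (in conformal coordinates) and a bound on $\del_x\del_y u_j$ (in Euclidean coordinates on a vertical half-plane) is a bookkeeping exercise of the sort already implicit in equations \eqref{AAbar}--\eqref{keqkb}, but verifying that all estimates degrade only by universal constants as $\calE_p \to 0$ is the delicate quantitative point.
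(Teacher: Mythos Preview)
Your overall architecture matches the paper's: contradiction, rescaling, soft graphicality from Section~2, then the hard analytic step via controlled isothermal coordinates and the harmonic-map/mean-value bound on $|\Aring|$ to control a vertical line integral of a mixed second derivative. That part of your outline is essentially a faithful summary of what Sections~4--5 carry out.

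The genuine gap is in the reduction step --- the part of the argument that turns a failing sequence into a sequence satisfying the hypotheses under which the line-integral machinery applies. Two specific problems:

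\medskip

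\textbf{(1) Point-picking.} You propose to dilate so that ``the scales $|P_jQ_j|$ and $R_j-|P_jQ_j|$ become order one.'' A single dilation cannot make both order one unless they are already comparable, which is not assumed. More seriously, even if you rescale about $Q_j$ by $\LipRad^\zeta_{\gamma_j}(Q_j)^{-1}$, you have no control on $\LipRad^\zeta$ at \emph{other} points of the rescaled curve, so you cannot invoke Lemma~\ref{Lipschitz.lift} uniformly to get the graphical representation your next step requires. The paper resolves this by choosing instead a point $Z_j$ \emph{minimizing} the ratio $\LipRad^\zeta_{\gamma_j}(Q)/(R_j-|P_jQ|)$ over the ball, and rescaling by $\delta_j^{-1}:=\LipRad^\zeta_{\gamma_j}(Z_j)^{-1}$. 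This minimality propagates: after rescaling, every point of $\tilde\gamma_j$ in a ball of radius $\sim 1/\zeta$ has $\LipRad^\zeta$ bounded below by a fixed $\eta>0$. That uniform lower bound is exactly hypothesis~(b) of Proposition~\ref{theclaim}, and without it the graphicality and the isothermal-coordinate construction are not uniformly available.

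\medskip

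\textbf{(2) The contradiction.} Your endgame is to forbid a tangent-line jump between two fixed boundary points. But the setup after rescaling gives $f_j(0)=0$, $f_j'(0)=0$, and $|f_j'(\pm 1)|=\zeta$ (by maximality of the Lipschitz interval), and the line-integral estimate shows $f_j\to f_*^\zeta$ in $\calC^1([-1,1])$ where $f_*^\zeta$ parametrizes an arc of a specific circle $C_*^\zeta$. There is no contradiction yet: a circular arc is a perfectly good limit. The paper closes the argument by iterating Proposition~\ref{theclaim} along the curve (using the uniform $\LipRad$ bound from step~(1)) to show that the \emph{entire} rescaled closed curve $\tilde\gamma_j$ converges in $\calC^1$ to the full circle $C_*^\zeta$. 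This forces $|\tilde\gamma_j|\to |C_*^\zeta|<\infty$, contradicting $|\tilde\gamma_j|=100\pi\delta_j^{-1}\to\infty$ from the normalization $|\gamma_j|=100\pi$. Your sketch does not contain this continuation step or this length contradiction, and I do not see how your proposed ``two-point jump'' mechanism substitutes for it.
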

From this and Lemma \ref{uniform.height} below, we can deduce the 
\begin{corollary}
\label{ereg.surface}
In the setting of Theorem \ref{eregularity}, there exists $\e'(\zeta,p)\le \e(\zeta,p)$ such that if $\calE^{B(P,R)}_p[Y] < \e'(\zeta,p)$, 
then $Y'_{B(P, R/2)}$ is a horizontal graph $z = u(x,y)$ over the half-disc $D(P,R/2)$ in the vertical half-plane 
$\RR_+ \times \ell_P$, and $|\nabla u|\le 2\zeta$ in $D(P,R/2)$. 
\end{corollary}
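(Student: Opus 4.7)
The plan is to combine Theorem~\ref{eregularity} with Lemma~\ref{uniform.height} and the orthogonality $\overline{Y}\perp \del_\infty\HH^3$ (the soft boundary regularity established in \S 2 for Willmore surfaces of finite energy). After a translation and rotation we may assume $P=0$ and that $\ell_P$ is the $y$-axis; the goal is then to represent $\overline{Y'_{B(0,R/2)}}$ as a horizontal graph $z=u(x,y)$ over the half-disc $D(0,R/2)$ in the half-plane $\{z=0,\, x\ge 0\}$, with $|\nabla u|\le 2\zeta$.

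First, Theorem~\ref{eregularity} tells us that the boundary arc $\gamma'_{B(0,R)}$ is the graph of a $\zeta$-Lipschitz function $z=g(y)$ over a $y$-interval containing $(-R/2,R/2)$. The orthogonality condition forces the tangent plane of $\overline{Y}$ at every $Q\in\gamma'$ to contain $\del_x$; together with the Lipschitz bound on $\gamma'$, the unit normal to $\overline{Y}$ at any boundary point is within $O(\zeta)$ of $\pm\del_z$, so the tangent plane is $\zeta$-close to the $(x,y)$-half-plane all along the boundary arc.

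The second step, which I expect to be the main obstacle, is to propagate this tangent-plane control into the interior of $Y'_{B(0,R/2)}$. Lemma~\ref{uniform.height} supplies a uniform height bound keeping $\overline{Y'_{B(P,R/2)}}$ within a thin $z$-neighborhood of the vertical half-plane. Combined with the standard interior $\calC^2$ regularity for Willmore surfaces with small local energy (applicable once $\e'\le \e$ is chosen sufficiently small, since $f_\calO$ is bounded above and below on $B(P,R)\cap Y$ away from the poles and so small weighted energy forces small unweighted energy), this yields a pointwise bound on the deviation of the normal from $\pm\del_z$ throughout $\overline{Y'_{B(0,R/2)}}$, of size comparable to $\zeta$.

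Finally, a Morse-theoretic/degree argument of the same flavor as those in \S 2 promotes the local graph representation to a global one: the horizontal projection $\pi:(x,y,z)\mapsto(x,y)$ restricted to $\overline{Y'_{B(0,R/2)}}$ is a proper submersion onto $D(0,R/2)$ whose restriction to the boundary arc $\gamma'$ is a diffeomorphism onto an arc of the form $\{(0,y): y\in I\}$, hence a covering of degree one and thus a diffeomorphism. The graph function $u$ therefore exists on $D(0,R/2)$, and its gradient bound $|\nabla u|\le 2\zeta$ comes directly from the pointwise tangent-plane control produced in the previous paragraph.
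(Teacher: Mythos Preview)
Your proposal names the two inputs the paper itself cites (Theorem~\ref{eregularity} and Lemma~\ref{uniform.height}), so you are on the right track, but the way you assemble them has two genuine gaps.

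\medskip

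\textbf{Step 1 over-reads Theorem~\ref{eregularity}.} The conclusion of that theorem is $\LipRad^\zeta_\gamma(Q)\ge \zeta(R-|PQ|)/10$ for each $Q\in\gamma'_{B(P,R)}$. This says only that near each $Q$ the curve is a $\zeta$-Lipschitz graph over the tangent line $\ell_Q$ on an interval of half-width at most $\sim \zeta R/10$, which is far smaller than $R/2$ since $\zeta<1/20$. To cover an interval of length $R/2$ by chaining such arcs you would need roughly $5/\zeta$ steps, and the tangent direction is allowed to rotate by $\arctan\zeta$ at each step, so the cumulative rotation can be of order~$1$. Thus the $\LipRad$ bound alone does \emph{not} give a single $\zeta$-Lipschitz graph of $\gamma'$ over $\ell_P$ on $(-R/2,R/2)$; indeed a circle of radius $\sim R/10$ satisfies the $\LipRad$ inequality at every point but is not such a graph.

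\medskip

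\textbf{Lemma~\ref{uniform.height} is not a height bound.} It is already the graphicality-plus-gradient statement you are trying to prove, established for a \emph{sequence} with $\calE\to 0$ via a blow-up contradiction (exactly as in Lemma~\ref{Lipschitz.lift}). It cannot be used as a separate $\calC^0$ estimate feeding into an interior-regularity step, and it requires as hypothesis precisely the global boundary graph from Step~1 that you have not yet secured.

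\medskip

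\textbf{The paper's route.} The argument the paper has in mind is indirect, in the spirit of the proofs of Lemmas~\ref{Lipschitz.lift} and~\ref{uniform.height}. If no $\e'$ works, extract a sequence $Y_j$ with $\calE_p^{B(P_j,R_j)}(Y_j)\to 0$ (hence also $\calE\to 0$, since $f\ge 5$) and bad points $Q_j\in Y'_{j,B(P_j,R_j/2)}$ where the normal is far from $\del_z$. Interior $\e$-regularity forces $x(Q_j)\to 0$. Theorem~\ref{eregularity} provides, at a nearby boundary point $Q'_j$, a $\zeta$-Lipschitz graph of $\gamma_j$ over $\ell_{Q'_j}$ of width $\ge \zeta R_j/20$, which after dilating by $1/x(Q_j)$ becomes a graph of width tending to infinity. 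The blow-up limit is therefore a vertical half-plane whose boundary is a $\zeta$-Lipschitz line, contradicting the persisting bad normal at height~$1$. No degree or Morse-theoretic step is needed; the $\LipRad$ bound is used only \emph{locally} at the nearby boundary point, which sidesteps the rotation problem above.
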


The Lipschitz radius is a reasonable measure of regularity on the space of normalized embedded curves $\gamma$. 
Note that if $\gamma_j$ is a sequence of such curves with $\LipRad(\gamma_j) \geq C > 0$, then there are uniform 
Lipschitz parametrizations around each point of every $\gamma_j$, hence in particular some subsequence of the 
$\gamma_j$ converge in $\calC^{0,\alpha}$ for any $\alpha < 1$ to a limit curve $\gamma$ which is itself Lipschitz. 

We also state, for future use, a slightly modified version of this result. Let $\calM'$ be the space of properly 
embedded Willmore surfaces $Y\subset \mathbb{H}^3$ meeting $\del \HH^3$ orthogonally and with 
with $\calC^1$ boundary curves $\gamma=\partial Y$. Thus $Y'_{B(P,R)}$ and $\calE_p^{B(P,R)}(Y)$ still make sense
if $Y \in \calM'$. The modification deals with surfaces $Y\in \calM'$ 
for which $\gamma'_{B(P,R)}$ intersects $\partial B(P,R)$.
\begin{theorem}
\label{eregularity2} 
For some $\zeta_0>0$ and every $\zeta \in (0,\zeta_0)$ there exists an $\e(\zeta,p)>0$ such that
if $Y \in \cal M'$, $\calE^{B(P,R)}_p\le \e(\zeta)$ and $\gamma'_{B(P,R)}$ intersects $\partial B(P,R)$ for some $P\in \gamma =\del_\infty Y$ 
 and if $\gamma'_{B(P,R)}$ is $\calC^1$ up to its endpoints, then 
\[
\LipRad^\zeta_\gamma(Q)\ge \zeta\cdot \frac{R-|PQ|}{10}
\]
for all $Q\in \gamma'_{B(P,R)}$. 
\end{theorem}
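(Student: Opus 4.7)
The statement of Theorem \ref{eregularity2} differs from Theorem \ref{eregularity} only in allowing the boundary component $\gamma'_{B(P,R)}$ to be an open $\calC^1$ arc with endpoints on $\partial B(P,R)$ (rather than a closed loop inside the ball), and in weakening the surface regularity from $Y\in\calM_{k,g}$ to $Y\in\calM'$. My plan is to mimic the proof of Theorem \ref{eregularity} more or less verbatim, after reducing every local estimate at an interior point $Q$ of $\gamma'_{B(P,R)}$ to one which sees only a sub-ball strictly interior to $B(P,R)$.

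The key reduction is that the claimed Lipschitz radius bound at $Q$ with $|PQ|=\rho$ has size $\zeta(R-\rho)/10$, which for $\zeta<1/5$ is strictly smaller than $(R-\rho)/2$. Only the portion of $\gamma'_{B(P,R)}$ lying inside the sub-ball $B(Q,(R-\rho)/2)\subset B(P,R)$ therefore enters the estimate at $Q$. The $\calC^1$-up-to-endpoints hypothesis ensures that this sub-arc is a well-behaved $\calC^1$ arc even when $Q$ approaches $\partial B(P,R)$, and its weighted energy is dominated by the ambient bound $\e$.

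With this localization in hand, I would run a contradiction argument exactly parallel to the one used to prove Theorem \ref{eregularity}: assume a sequence $Y_j\in \calM'$ with $\calE_p^{B(P_j,R_j)}(Y_j)\to 0$ and points $P_j,Q_j$ violating the bound; normalize to $P_j=0$, $R_j=1$ by a hyperbolic isometry; close up $\gamma'_{B(0,1)}$ to a closed $\calC^1$ curve by adjoining a fixed smooth extension outside $B(0,1)$, and graft $Y'_{B(0,1)}$ to a standard spherical cap outside $B(0,1)$ using the same grafting construction already invoked in the proof of \ref{eregularity}. Because this extension is supported outside the sub-ball $B(Q_j,(1-|Q_j|)/2)$ relevant to the estimate at $Q_j$, it does not affect the local analysis. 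The isothermal gauge via \cite{DLM} and \cite{MS}, the M\"obius normalization, the deSitter harmonic map mean value inequality, and the line integral control of the mixed second derivative of the graph function then all carry over without change and yield the desired contradiction.

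The main obstacle is checking that each ingredient of the proof of Theorem \ref{eregularity} truly depends only on local data of $Y_j$ near $Q_j$ inside $B(P_j,R_j)$ and not on any global feature of $\gamma_j$ outside the ball. In particular the grafted doubly-symmetric sphere and the M\"obius normalization must be verified to depend only on the local graph data, and the auxiliary line integral that controls the unweighted energy contribution needs to be re-examined to ensure it is performed over curves that remain inside $B(P_j,R_j)$. The drop in boundary regularity from $\calC^\infty$ to $\calC^1$ is harmless, since the estimates use only quantitative first-derivative information at the boundary.
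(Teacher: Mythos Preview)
Your proposal overcomplicates matters and misses the key structural point: in the paper, Theorem~\ref{eregularity2} is \emph{easier} to derive from Proposition~\ref{theclaim} than Theorem~\ref{eregularity}, not harder. The paper runs the identical contradiction argument for both theorems---select the point $Z_j$ minimizing the ratio $\LipRad^\zeta_{\gamma_j}(Q)/(R_j-|P_jQ|)$, translate $Z_j$ to the origin, dilate by $\delta_j^{-1}=\LipRad^\zeta_{\gamma_j}(Z_j)^{-1}$, and invoke Proposition~\ref{theclaim} iteratively to conclude $\tilde\gamma_j\to C^\zeta_*$ in $\calC^1$. For Theorem~\ref{eregularity2} the contradiction is then \emph{immediate}: by hypothesis $\gamma'_{B(P,R)}$ reaches $\partial B(P,R)$, so after rescaling $\tilde\gamma_j$ still exits the rescaled ball $\tilde B_j\supset B(0,5/\zeta)$, which is impossible since the limit circle $C^\zeta_*$ lies strictly inside $B(0,5/\zeta)$. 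No length argument, no closing-up, no grafting is needed at this stage.

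Your extra steps are both unnecessary and problematic. First, there is no grafting in the proof of Theorem~\ref{eregularity}; the sphere-patching you have in mind occurs inside the proof of Lemma~\ref{DeLell.appl} (hence inside Proposition~\ref{theclaim}), and that proposition already accepts incomplete Willmore surfaces in a half-ball as input, so open arcs are handled directly. Second, closing up $\gamma'_{B(0,1)}$ and grafting $Y'_{B(0,1)}$ to a spherical cap produces a surface that is not Willmore in the grafted region, so Theorem~\ref{eregularity} does not apply to it; you would be forced back into the proof of Proposition~\ref{theclaim} anyway. Third, your normalization centers at $P_j$ with $R_j=1$, whereas the essential point-selection step centers at the ratio-minimizing point $Z_j$ and dilates by $\delta_j^{-1}$; without this you cannot verify hypothesis~(b) of Proposition~\ref{theclaim} (the uniform lower bound on $\LipRad^\zeta$ in the rescaled picture). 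The cleanest fix is simply to follow the paper: run the $Z_j$-centered rescaling verbatim, apply Proposition~\ref{theclaim}, and use the exits-the-ball hypothesis for the final contradiction.
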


Theorem \ref{eregularity} leads to the following characterization of the possible limits of sequences of Willmore surfaces
with weighted energy bounded above. 
\begin{theorem}
\label{globalepsreg}
Let $Y_j \in \calM_{k,g}$ and suppose that $\calE_p(Y_j) \leq M$ for some $M > 0$. Suppose too that the distance 
between the various components of $\gamma_j = \del_\infty Y_j$ is uniformly bounded away from $0$. Then if
$0<\zeta\le \zeta_0$,  there is a subsequence, again relabelled as $Y_j$, which converges to a finite multiplicity 
(but possibly disconnected) Willmore surface $Y_*$ with boundary curve $\gamma_*$.
The convergence $Y_j\to Y_*$  is smooth away from $\{x=0\}$, except at a finite number of interior points, where $Y_*$ 
may fail to be smooth. In this limit, the set of poles $\calO^{(j)} \subset Y_j$ converges to a set of poles $ \calO^* \subset  \overline{Y_{*}}$.

Furthermore, there exist points $P_1, \ldots, P_{\Lambda} \in \gamma_*$, $\Lambda = \Lambda(\zeta)$, and corresponding sequences 
$P_i^{(j)} \in \gamma_j$, $i = 1, \ldots, \Lambda$, with $P_i^{(j)} \to P_i$ for all $i$, such that the convergence of 
$\gamma_j$ to $\gamma_*$ is $\calC^{0,\alpha}$ for every $\alpha < 1$ away from the points $P_i^{(j)}$. 
Finally, if $P\in \gamma\setminus \{P_1, \ldots, P_\Lambda\}$, then there is a 
line $\ell_P$ such that $Y_*$ is the graph of a Lipschitz function with Lipschitz constant $2\zeta$ over some disc in the 
half-plane $\RR^+ \times \ell_P$. 
\end{theorem}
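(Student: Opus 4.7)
The plan is to combine the boundary $\epsilon$-regularity from Theorem \ref{eregularity} with standard interior Willmore theory, using a concentration-compactness argument to extract a finite set of exceptional boundary points where energy is lost. First, for any compact $K \subset \HH^3$, the bound $\calE(Y_j \cap K) \leq \calE_p(Y_j) \leq M$ (valid since $f \geq 5$ everywhere) together with interior $\epsilon$-regularity for Willmore surfaces (see \cite{KuS, riv1}) yields, after passing to a subsequence and exhausting $\HH^3$ by compacta, smooth convergence on the interior away from at most finitely many concentration points. The hypotheses on the genus $g$, the number of ends $k$, and the uniform separation of the boundary components of $\gamma_j$ control the topology and prevent collapse in the limit, giving a limit $Y_*$ of finite multiplicity.

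Next, fix $\epsilon = \epsilon(\zeta, p)$ as in Theorem \ref{eregularity} and declare a boundary point $P_* \in \{x = 0\}$ to be \emph{bad} if there exist a subsequence and points $P^{(j)} \in \gamma_j$ with $P^{(j)} \to P_*$ such that
\[
\liminf_{r \to 0^+}\ \liminf_{j \to \infty}\ \calE_p^{B(P^{(j)}, r)}(Y_j) \geq \epsilon.
\]
A Vitali-type covering argument, exploiting that disjoint half-balls carry disjoint portions of the total energy, bounds the number $\Lambda$ of bad points by $M/\epsilon(\zeta, p)$. Label them $P_1, \ldots, P_\Lambda$ and choose approximating sequences $P_i^{(j)} \to P_i$. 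At every other point $P \in \gamma_*$ there exist $r > 0$ and $j_0$ with $\calE_p^{B(P, r)}(Y_j) < \epsilon$ for all $j \geq j_0$; Theorem \ref{eregularity} then guarantees $\LipRad^\zeta_{\gamma_j}(Q) \geq \zeta (r - |PQ|)/10$ on the relevant component of $\gamma_j \cap B(P, r)$, and Corollary \ref{ereg.surface} represents the corresponding surface piece as a horizontal graph $z = u_j(x, y)$ over a common half-disc $D(P, r/2) \subset \RR_+ \times \ell_P$ with $|\nabla u_j| \leq 2\zeta$.

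The uniform Lipschitz control and Arzel\`a-Ascoli extract a subsequence along which $u_j$, and hence $\gamma_j$, converge in $\calC^{0,\alpha}$ to a Lipschitz limit away from the $P_i$. Standard elliptic bootstrap for the Willmore equation applied to the interior graph piece, combined with the smooth interior convergence from the first step, identifies the interior limit with a Lipschitz graph up to $\gamma_*$, yielding the required local representation of $Y_*$ near every good boundary point. The poles $\calO^{(j)}$ form a finite set of fixed cardinality $N$, so a further diagonal extraction produces a limit set $\calO^* \subset \overline{Y_*}$. The main obstacle, and the reason Theorem \ref{eregularity} (rather than merely a weak Hausdorff closeness) is indispensable, is gluing the interior and boundary limits across the thin layer near $\{x=0\}$: one must rule out a portion of $Y_j$ carrying small weighted energy but nonzero mass which nevertheless escapes to $\del_\infty \HH^3$ in regions away from the exceptional points. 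The uniform positivity of $\LipRad^\zeta_{\gamma_j}$ forces persistent graphical parametrizations whose domains do not shrink with $j$, so the interior limit extends continuously up to the Hausdorff limit of the $\gamma_j$, completing the identification of $Y_*$ and its boundary.
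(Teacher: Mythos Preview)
Your approach is essentially the same as the paper's: interior $\epsilon$-regularity from \cite{riv1} for convergence away from $\{x=0\}$, then Theorem~\ref{eregularity} and Corollary~\ref{ereg.surface} at the boundary to isolate finitely many bad points, followed by Arzel\`a--Ascoli and a diagonalization. The paper phrases the selection of bad points slightly differently (via the maximal radius $\delta(Q)$ at which the local weighted energy stays below $\epsilon'(\zeta)$ rather than your concentration-limit definition), but the two formulations are equivalent.

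There is, however, a genuine gap in your argument for finite multiplicity. Fixing the genus $g$, the number of ends $k$, and the separation of boundary components does \emph{not} by itself rule out a limit with infinite multiplicity; topology alone does not control sheeting in a Willmore limit. The paper's argument is different and actually works: if a component $Y_{*,a}$ of $Y_*$ had infinite multiplicity, then the uniform energy bound would force each sheet of $Y_{*,a}$ to have zero energy, hence to be totally geodesic. But then $\partial_\infty Y_{*,a}$ is a round circle of definite length, and counting boundary length with multiplicity gives infinite total length, contradicting the normalization $|\gamma_j|=100\pi$. You should replace your topological sentence with this energy/length argument.
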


The convergence in the interior relies on now standard results for Willmore surfaces, while the behaviour at the
boundary follows directly from Theorem \ref{eregularity} and Corollary~\ref{ereg.surface}. Indeed, let $\delta(Q)$
be the largest radius such that the half-ball $B(Q,\delta(Q))$ centered at $Q\in\gamma_j$ satisfies 
$\calE^{B(Q,\delta(Q))}_p(Y'_j ) \le  \e'(\zeta)$. Letting $t$ be the arc-length parameter on any component of $\gamma_i$, 
then the upper bound on $\calE_p(Y_j)$ implies that for all but those finitely many 
values of $t$ corresponding to the points $Q_i^{(j)}$, $\liminf_j\delta(\gamma_{j}(t))>0$. 
From this and a diagonalization argument we deduce the asserted convergence near the boundary. 
The interior convergence follows from the usual interior $\epsilon$-regularity results for smooth Willmore surfaces, 
(see Theorem I.5 in \cite{riv1}) together with a covering argument and a further diagonalization. 
Hence, up to a subsequence, the $Y_j$ converge to a Willmore surface $Y_*$ which is smooth away from a finite number 
of points.  It remains to prove that $Y_*$ has finite multiplicity. However, if some component of $Y_{*,a}$ of $Y_*$ 
has infinite multiplicity, then the upper bound on energy implies that $Y_*$ must be totally geodesic,
which would force the length of $\del Y_{*,a}$ (counted with multiplicity) to be infinite. This is impossible. 
 
Our next result shows that the limit curve $\gamma_*$ is $\calC^1$, rather than just Lipschitz, away
from a finite set of points.
\begin{theorem}
\label{removability} In the setting of Theorem \ref{globalepsreg}, the curve $\gamma_* = \del_\infty Y_*$ is 
piecewise $\calC^1$, with singularities occuring (at most) at the set $\{P_1, \ldots, P_\Lambda\}\subset \gamma_*$. 
\end{theorem}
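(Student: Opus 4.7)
Fix $P_* \in \gamma_* \setminus \{P_1,\ldots,P_\Lambda\}$; the aim is to show that $\gamma_*$ has a tangent line at $P_*$ varying continuously with $P_*$. The strategy is to apply Theorem \ref{eregularity} to the approximating surfaces $Y_j$ with progressively smaller Lipschitz parameter $\zeta' < \zeta$ and pass this control to the limit. Choose $P_j \in \gamma_j$ with $P_j \to P_*$ and consider the energy measures $\mu_j := |\Aring|^2 f_{\calO^{(j)}}^{2p}\, d\mu_{Y_j}$ on $\overline{\HH^3}$, uniformly bounded in total mass. Along a subsequence, $\mu_j \to \mu_*$ weakly.

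The crux is to show $\mu_*(\{P_*\}) = 0$. Since $P_*$ is not a bad point, the proof of Theorem \ref{globalepsreg} already yields $\mu_*(\overline{B(P_*, R_0)}) \le \epsilon'(\zeta)$ for some fixed $R_0 > 0$. To upgrade this to the absence of an atom of any size, we argue (as a working premise to be made rigorous by the bubbling framework of \S 8) that any atom of $\mu_*$ at a boundary point must arise from an energy bubble escaping to infinity along the sequence, and that such bubbles can only form at bad points. Since $P_*$ is good, no bubble forms at it and $\mu_*(\{P_*\}) = 0$. This boundary atom analysis is the main obstacle.

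Granted this, for any $\zeta' \in (0,\zeta)$ pick $R' \le R_0$, avoiding the countable set of radii with $\mu_*(\partial B(P_*, R')) > 0$, such that $\mu_*(B(P_*, R')) < \tfrac{1}{2}\epsilon(\zeta', p)$. Weak convergence together with $P_j \to P_*$ gives $\calE_p^{B(P_j, R')}(Y_j) < \epsilon(\zeta', p)$ for $j$ large, so Theorem \ref{eregularity} and Corollary \ref{ereg.surface} imply that $\gamma_j$ is a $\zeta'$-Lipschitz graph over some line $\ell_j^{\zeta'}$ on a disc of radius $R'/2$ about $P_j$. Extracting a further subsequence with $\ell_j^{\zeta'} \to \ell_*^{\zeta'}$, the $\calC^{0,\alpha}$ convergence from Theorem \ref{globalepsreg} yields that $\gamma_*$ is a $\zeta'$-Lipschitz graph over $\ell_*^{\zeta'}$ on the corresponding disc about $P_*$. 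As $\zeta' \downarrow 0$, the lines $\ell_*^{\zeta'}$ form a Cauchy family, since for $\zeta_1' < \zeta_2'$ both representations hold on their common disc, forcing $\angle(\ell_*^{\zeta_1'}, \ell_*^{\zeta_2'}) \le \arctan \zeta_2'$; the limit is the tangent line $\ell_{P_*}$. Running the same argument at any nearby $Q \in \gamma_*$ and comparing graphical representations at a common scale bounds $\angle(\ell_Q, \ell_{P_*}) \le 2\arctan \zeta'$ once $|Q - P_*|$ is sufficiently small, which yields continuity of $P \mapsto \ell_P$ and hence piecewise $\calC^1$ regularity of $\gamma_*$ on $\gamma_* \setminus \{P_1,\ldots,P_\Lambda\}$.
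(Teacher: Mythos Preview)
Your approach has a genuine gap at the step you yourself flag as the crux: the claim that $\mu_*(\{P_*\}) = 0$. First, the justification you propose---invoking the bubbling analysis of \S 8---is circular: Theorem \ref{jump.implies.bubble} explicitly assumes that $Y_*$ is already $\calC^1$ up to $\gamma_* \setminus \{P_1,\ldots,P_\Lambda\}$, which is precisely what Theorem \ref{removability} asserts. Second, and more seriously, the no-atom claim is in general \emph{false}. The examples in \S 9 exhibit sequences $Y_j$ whose energy concentrates at boundary points $q_r$ with atom size $\calE(Y_r)$, and this quantity can be made strictly smaller than $\epsilon'(\zeta)$, so that $q_r$ is \emph{not} one of the bad points $P_i$. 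At such a $q_r$ your argument cannot produce a ball with energy below $\epsilon(\zeta',p)$ once $\zeta'$ is small enough, yet $\gamma_*$ (a round circle in those examples) is perfectly $\calC^1$ there. The theorem is about the regularity of $\gamma_*$, not about $\calC^1$ convergence of $\gamma_j$ to it; your route through the approximating energies conflates the two.

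The paper avoids this entirely by working on the limit surface $Y_*$ itself rather than on the sequence. It first shows that either all poles escape to infinity, in which case $Y_*$ is totally geodesic and the conclusion is trivial, or some poles persist in the interior, in which case $\calE_p(Y_*,\calO^*) < \infty$. It then proves a regularity statement (Theorem \ref{gen.regularity}) for a \emph{single} Willmore surface with finite weighted energy and locally Lipschitz boundary. The argument there is a blow-up on $Y_*$: if $\dot\gamma_*(t_j)$ fails to be Cauchy along $t_j \to t_*$, rescale by $|y_j - y_{j-1}|^{-1}$; because one is zooming into ever-smaller balls of a fixed finite-energy surface, the weighted energies of the rescaled pieces tend to zero, and Proposition \ref{ereg.prp2} then rules out the derivative jump. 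The point is that the energy of $Y_*$ in shrinking balls about $P_*$ genuinely goes to zero, whereas the weak-limit measure $\mu_*$ built from the $Y_j$ need not.
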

\begin{remark}
A modification of the proof of Lemma~\ref{uniform.height} below shows that $\overline{Y}_*$ is then $\calC^1$ up 
to $\gamma_* \setminus \{P_1, \ldots, P_\Lambda\}$. 
\end{remark}

We also describe bubbling in this setting by showing that away from points where the convergence $\gamma_j\rightarrow\gamma_*$ 
is not $\calC^1$, the loss of compactness is due to some portion of the Willmore suraces {\it with non-zero energy} 
escaping to infinity: 
\begin{theorem}
 \label{jump.implies.bubble}
Let $Y_j$ be a sequence in $\calM_{k,g}$, $\calE_p(Y_j) \leq M<\infty$, with $Y_j \to Y_*$ where $Y_*$ is $\calC^1$ up 
to $\gamma_* \setminus \{P_1, \ldots, P_\Lambda \}$. After rotating and translating, we write each $Y_j$ as a horizontal 
graph $z = u_j(x,y)$ over the half-disc $\{x^2+y^2\le \delta^2\}$, with $|\nabla u_j|\le 2\zeta$ and $u_j\rightarrow u_*$ 
in $\calC^\infty$ away from $\{x=0\}$ and in $\calC^{0,\alpha}$ up to $\{x=0\}$. Suppose too that for some $y_0\in 
(-\delta,\delta)$, $\lim_{j\to \infty}\partial_y u_j(y_0,0)\ne \partial_y u_*(y_0,0)$. Then there exists a sequence of interior 
points $Q_j\in Y'_j\bigcap B(0,\delta)$ with $Q_j\rightarrow A_j := (0,y_0, u_j(y_0,0))$ and a sequence of
hyperbolic isometries $\psi_j$ mapping $Q_j$ to $(1,0,0)$ so that $\psi_j(Y_j)\to Y'_*$ for some complete 
Willmore surface $Y'_*$ with $\calE(Y'_*) > 0$. 

At most $N$ non-isometric blow-ups can be obtained in this way, and there exists a number $M'>0$ 
such that for each sequence $Q_j$ there is a sequence of poles $\calO^{(j)}\in Y_j$ such that 
${\rm dist}_{Y_j}(Q_j,O^{(j)}) \le M'$.
\end{theorem}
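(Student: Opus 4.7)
The plan is to argue by contradiction, using the same line-integral technology developed for Theorem~\ref{eregularity}, and then extract a bubble by hyperbolic rescaling.

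\textbf{Locating concentration points.} First I would seek a sequence $Q_j \in Y_j' \cap B(0,\delta)$ with $Q_j \to A_j$ and $|\Aring|_g(Q_j) \geq c_0 > 0$ for some fixed $c_0$. Suppose no such sequence exists: then for every $r>0$ sufficiently small, $\sup_{Y_j \cap B(A_j,r)} |\Aring|_g \to 0$. Writing
\[
\partial_y u_j(y_0,0) - \partial_y u_j(y_0,1) = -\int_0^1 \partial_x \partial_y u_j(y_0,x)\, dx,
\]
I would invoke the pointwise bound on $\partial_x\partial_y u_j$ by $|\Aring|_g$ (in the gauge constructed in \S3--6 via the modified De~Lellis--M\"uller isothermal coordinates together with the realization of Willmore surfaces as harmonic maps into deSitter space) to conclude that this line integral tends to $0$. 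The relevant bound in this context is the \emph{unweighted} second line integral referenced in the outline of \S8, which controls the jump by $\calE^{\mathrm{cone}}(Y_j)$ for a conical region around $A_j$ rather than a half-ball; this is critical, since the weighted energy might permit arbitrarily large local concentration on a half-ball. Combined with smooth interior convergence $\partial_y u_j(y_0,1) \to \partial_y u_*(y_0,1)$ and the analogous identity for $u_*$, this contradicts the jump hypothesis.

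\textbf{Blowing up.} Having found $Q_j$, choose a hyperbolic isometry $\psi_j$ sending $Q_j$ to $(1,0,0)$, by composing the horizontal Euclidean translation carrying $Q_j$ above $0 \in \RR^2$ with the hyperbolic dilation normalizing the height. Since the Willmore equation is invariant under hyperbolic isometries, each $\tilde Y_j := \psi_j(Y_j)$ is Willmore and passes through $(1,0,0)$, and $|\Aring|_g((1,0,0))\ge c_0$ by pointwise invariance \eqref{AAbar}. Applying the interior $\e$-regularity for Willmore surfaces (Theorem I.5 of \cite{riv1}) together with a diagonal argument as in the proof of Theorem~\ref{globalepsreg}, I extract a subsequence converging smoothly on compact subsets of $\HH^3$, away from a locally finite singular set of finite multiplicity, to a complete Willmore surface $Y_*'$. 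Continuity at $(1,0,0)$ yields $\calE(Y_*')>0$.

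\textbf{Pole tracking and finiteness.} Because $|\Aring|_g(Q_j)\geq c_0$ and the rescaled surfaces converge smoothly, a uniform lower bound $\int_{B_{Y_j}(Q_j,r_0)} |\Aring|_g^2\, d\mu \geq \eta > 0$ holds on an intrinsic ball of fixed radius. Let $O^{(j)} \in \calO^{(j)}$ be the pole of $Y_j$ closest to $Q_j$ and $d_j = \mathrm{dist}_{Y_j}(Q_j,O^{(j)})$; then $f_{\calO^{(j)}} \geq d_j - r_0$ on this ball, so
\[
M \geq \calE_p(Y_j) \geq (d_j - r_0)^{2p}\, \eta,
\]
giving $d_j \leq (M/\eta)^{1/2p} + r_0 =: M'$. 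Finally, since every bubble must sit within intrinsic distance $M'$ of some pole, and at most one non-isometric blow-up can be associated with each of the $N$ poles (two sequences anchored at the same pole differ by a hyperbolic isometry and produce isometric limits), there are at most $N$ non-isometric blow-ups.

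\textbf{Main obstacle.} The hard step is the first one: upgrading the interior $\e$-regularity-style line integral bound so that (i)~the integrand $\partial_x\partial_y u_j$ is controlled pointwise by $|\Aring|_g$ with a constant independent of $j$, which requires redoing the gauge construction of \S3--6 in the present setting (without assuming a priori smallness of the weighted energy near $A_j$), and (ii)~the resulting integral estimate is by the unweighted energy on a conical region, not a half-ball. The conical improvement is essential, since the weighted-energy hypothesis alone cannot exclude large unweighted energy concentrating in a half-ball, but \emph{can} rule out concentration along the whole cone down to $A_j$.
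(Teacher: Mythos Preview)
Your overall architecture matches the paper's: use the line-integral identity \eqref{after.integration} from \S5, argue by contradiction that one of the two integrals must stay bounded below, extract a bubble, then do the pole-tracking and pigeonhole for finiteness. The last two paragraphs (pole tracking, at most $N$ bubbles) are essentially correct and coincide with the paper's \S8.3.

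The gap is in your ``locating concentration points'' step, and it is the same difficulty the paper flags explicitly in the introduction. You propose to negate the conclusion by assuming $\sup_{Y_j\cap B(A_j,r)}|\Aring|_g\to 0$ and then bound the line integral $\int_0^1\partial_x\partial_y u_j\,dx$ pointwise by $|\Aring|_g$. This does not work: $|\Aring|_g=x|\Abring|_{\olg}$, so your hypothesis only gives $|\Abring|_{\olg}=o(1)/x$, which is not integrable in $x$ down to $0$. The interior $\epsilon$-regularity bound (Remark~\ref{bd.comparison}) already gives $|\Abring|_{\olg}\le C/x$ without any smallness assumption, and that is exactly the borderline non-integrable rate. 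The whole point of the weighted energy and Proposition~\ref{Abound} is to upgrade this to $|\Abring|_\EE\le C\sqrt{\calE_p^{B^1}}/(q\,\tilde f^{\,p})$, and it is the extra $\tilde f^{\,p}\sim|\log q|^p$ factor that makes the line integral converge.

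Because of this, the paper does \emph{not} argue by negating ``$|\Aring|_g(Q_j)\ge c_0$'' directly. Instead it splits into the two cases \eqref{eqn1}, \eqref{eqn2} and treats them by completely different mechanisms. In case \eqref{eqn1} one uses Proposition~\ref{decay} to find points $P_j$ with $\calE_p^{B^1_\HH(P_j)}$ bounded below; if these also satisfy $f_j(P_j)\le M$ (bounded distance to a pole) the blowup there has positive unweighted energy and you are done, while if no such $P_j$ exist one replaces $p$ by $p'<p$ in Proposition~\ref{Abound} and shows the first integral actually tends to $0$, a contradiction. In case \eqref{eqn2} the argument is the sector-flux computation you allude to, but note that there the concentration points are detected via $|\nabla\phi_j|_g\ge\mu$ \emph{or} $x|\Abar|_{\olg}\ge\mu$, not via $|\Aring|_g$ alone; the limit surface is shown to be nontrivial because a half-plane would force the conformal factor to be constant. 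Your proposal collapses these two cases into one and misses both the $p'<p$ trick and the conformal-factor alternative.
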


This Theorem actually proves that the convergence $\del Y_j\to \del Y_*$ is $\calC^1$ near all 
points $P\in \partial Y_*\setminus (\partial Y_*\bigcap \{P_1,\dots, P_\Lambda\})$ {\it except} at those points 
on $\del_\infty Y_*$ which are limits of the poles $O_i^{(j)}$:

\begin{corollary}
\label{no.jump.no.poles}
Assume that for some point $P\in \partial Y_*\setminus (\partial Y_*\bigcap \{P_1,\dots, P_\Lambda\})$ there exists a relatively open set $\Omega\subset \overline{\mathbb{R}^3_+}$
such that $O_i\notin\Omega$ for all poles $O_i\in Y_j$ and for all $j$ large enough. 
Then the curves $\partial Y_j$ converge to $\partial Y_*$ in the ${\cal C}^1$ norm in the domain $\Omega$. 
\end{corollary}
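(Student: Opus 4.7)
The plan is to argue by contradiction, leveraging Theorem~\ref{jump.implies.bubble} to convert any failure of $\calC^1$ convergence in $\Omega$ into the existence of a pole that must lie in $\Omega$ for all large $j$. Suppose there is a point $P'\in \Omega\cap \partial Y_*$ with $P'\notin\{P_1,\dots,P_\Lambda\}$ at which the convergence $\partial Y_j\to \partial Y_*$ fails to be $\calC^1$. By Theorem~\ref{globalepsreg} and Corollary~\ref{ereg.surface}, in a neighborhood of $P'$ each $Y_j$ may be written as a horizontal graph $z=u_j(x,y)$ over a half-disc in a vertical half-plane, with uniform Lipschitz bound $|\nabla u_j|\le 2\zeta$, and $u_j\to u_*$ in $\calC^{0,\alpha}$ up to $\{x=0\}$. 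Failure of $\calC^1$ convergence then forces the existence of some $y_0$ (with $(0,y_0,u_*(y_0,0))$ arbitrarily close to $P'$) at which $\lim_{j\to\infty}\partial_y u_j(y_0,0)\ne\partial_y u_*(y_0,0)$, placing us precisely in the hypothesis of Theorem~\ref{jump.implies.bubble}.

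Applying Theorem~\ref{jump.implies.bubble} at $y_0$, we obtain a sequence of interior points $Q_j\in Y_j'$ with $Q_j\to A_*:=(0,y_0,u_*(y_0,0))$, a nontrivial Willmore bubble, and most importantly a sequence of poles $O^{(j)}\in Y_j$ satisfying $\mathrm{dist}_{Y_j}(Q_j,O^{(j)})\le M'$ for a uniform constant $M'$.

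Since the intrinsic hyperbolic distance on $Y_j$ dominates the ambient hyperbolic distance in $\HH^3$, we get $\mathrm{dist}_{\HH^3}(Q_j,O^{(j)})\le M'$. An elementary calculation in the upper half-space model shows that a hyperbolic ball of radius $M'$ centered at a point of Euclidean height $x$ has Euclidean diameter bounded by $C(M')\cdot x$. Because $Q_j\to A_*\in \del_\infty \HH^3$, the Euclidean height of $Q_j$ tends to zero, and therefore $|O^{(j)}-A_*|_{\euc}\to 0$. By choosing $y_0$ close enough to the coordinates of $P'$ (using that $\Omega$ is relatively open in $\overline{\RR^3_+}$ and contains $P'$), we may assume $A_*\in \Omega$; the openness of $\Omega$ then implies $O^{(j)}\in \Omega$ for all $j$ large enough, contradicting the hypothesis on the poles.

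The only genuinely non-mechanical issue, and the step one should write out carefully, is the reduction from the qualitative statement ``convergence fails in $\calC^1$ norm in $\Omega$'' to the quantitative statement ``there is a fixed $y_0$ at which $\partial_y u_j(y_0,0)$ does not converge to $\partial_y u_*(y_0,0)$'' required by Theorem~\ref{jump.implies.bubble}; this is done by a standard compactness/subsequence argument combined with the uniform Lipschitz graph representation near $P'$. Everything else is routine: the comparison between intrinsic and ambient hyperbolic distances, and the shrinking of bounded hyperbolic neighborhoods to Euclidean points near $\del_\infty\HH^3$.
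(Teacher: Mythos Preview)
Your argument is correct and takes a somewhat different route from the paper's. Both begin by contradiction and invoke Theorem~\ref{jump.implies.bubble} at a point where $\calC^1$ convergence fails, but they close the contradiction differently. You use the \emph{full} conclusion of Theorem~\ref{jump.implies.bubble}, namely that the bubble center $Q_j$ stays within bounded intrinsic distance $M'$ of some pole $O^{(j)}$; since $Q_j$ converges in the Euclidean sense to a boundary point $A_*\in\Omega$, the comparison of intrinsic, ambient-hyperbolic and Euclidean distances forces $O^{(j)}\to A_*\in\Omega$, contradicting the hypothesis directly. The paper instead uses only the existence of the nontrivial bubble, not the pole-attachment clause: it first notes that since poles avoid $\Omega$, for any $p'\in(1,p)$ the localized energy $\calE_{p'}^{B(P,\delta)}(Y_j)$ is small (because $f_j$ is large there and $\calE_{p'}\lesssim f_j^{2(p'-p)}\calE_p$), which both justifies the small-energy hypothesis of Theorem~\ref{jump.implies.bubble} and sets up the contradiction; the bubble then yields intrinsic unit balls with energy $\ge\epsilon_0>0$ centered at points $P_j\to Q$, whence $\calE_{p'}^{B(P,\delta)}(Y_j)\ge f_j(P_j)^{2p'}\epsilon_0\to\infty$. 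Your approach is cleaner because it exploits the pole-attachment statement (proved in \S\ref{finitedistance}) rather than re-deriving its content through the weighted-energy mechanism; the paper's approach, via the $p'<p$ trick, makes more explicit why the small-energy regime applies near $P$, a point you handle by citing $P'\notin\{P_1,\dots,P_\Lambda\}$ and Theorem~\ref{globalepsreg}.
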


Indeed, if this were not the case at some point $Q\in \partial Y_*\bigcap \Omega$, then for $p' \in (1,p)$ and any
$\epsilon>0$ there exists an open half-ball $B(P,\delta)$  such that 
$\calE_{p\rq{}}^{B(P,\delta)}(Y_j)\le \epsilon$ for all $j$ large enough.
If $\epsilon$ is small enough, this implies a lower bound on ${\rm LipRad}(\gamma_j)$ in the 
half-balls $B(P,\delta/2)$. 

Applying Theorem \ref{jump.implies.bubble}, we obtain a blow-up limit $\varphi_j(Y_j)\to Y\rq{}_*$ 
with $\calE(Y\rq{}_*)>0$, where the $\varphi_j$ are hyperbolic  isometries centered at $Q\in\partial_\infty\mathbb{H}^3$. 
The $\calC^\infty$ convergence away from the boundary of $\varphi_j(Y_j)$ implies that there exist balls 
$B(P_j,1)\subset Y_j$ of (intrinsic) radius $1$ in  $Y_j$ with $P_j\to Q$ and with $\calE^{B(P_j,1)}(Y_j)\ge\epsilon_0 > 0$.
(Indeed, it suffices to let $\epsilon_0<\calE^{B(P,1)}(Y\rq{}_*)$ where $B(P,1)\subset Y\rq{}_*$ is
any intrinsic ball where the energy is non-zero). Now, $P_j\to Q$ readily implies that $f_{j,{\calO}}(Q_j) \to\infty$
($f_{j,{\calO}}$ is the weight function for the surface $Y_j$), so that $\calE^{B(P,\delta)}_{p\rq{}}(Y_j)\to\infty$. This is a contradiction.  

In the last section of this paper, we construct examples where bubbling to infinity does occur. These are sequences 
of minimal (and thus Willmore) surfaces $Y_j \in \calM$ with $\calE(Y_j) \leq M<\infty$ and with 
$\overline{Y}_j$ converging smoothly away from a finite number of points on the boundary. At these points, 
the convergence fails to be $\calC^1$, despite the fact that the curves $\gamma_j$ and $\gamma_*$ are 
all $\calC^\infty$. 

\medskip

\noindent {\bf Acknowledgements:}  We offer special thanks to Tristan Rivi\`ere for generously sharing his insight 
into these questions and for much encouragement during the early stages of this work. The first author acknowledges
helpful conversations with Jacob Bernstein. R.M. was supported by NSF Grant DMS-1105050. S.A. was supported 
by  NSERC grants 488916 and 489103, as well as  Clay  and Sloan fellowships. 

\section{Some geometric lemmas}
We begin with some geometric results, pertaining primarily to {\it fixed} complete  Willmore surfaces $Y$ with 
$\calE(Y) < \infty$ and with $\gamma = \del_\infty Y$ a finite union of compact embedded Lipschitz curves. 
We prove first that any such $Y$ meets $\del_\infty \HH^3$ orthogonally, which is the well-known 
behaviour when $Y$ is $\calC^2$ up to the boundary, We then show that if some segment of $\gamma$ is graphical 
with a bounded Lipschitz constant, then a portion of the Willmore surface directly above this segment is also 
graphical, with bounded gradient. The proofs are almost entirely geometric, involving blowup arguments, though 
we rely on one analytic fact which is the $\e$-regularity theorem for (interior) Willmore discs. The finiteness
of energy is used crucially at several places. 

\begin{lemma}
\label{asympt.vertical}
Let $Y$ be a complete properly embedded Willmore surface such that $\gamma = \del_\infty Y$ is a finite collection 
of embedded Lipschitz curves. Let $P_j$ be a sequence of points in 
$Y$ converging to a point on $\del_\infty Y$. If $\bar{\nu}$ denotes the Euclidean unit normal to $Y$, then 
$\langle \partial_x, \bar{\nu} \rangle_{\olg}(P_j)\rightarrow 0$ as $j \to \infty$.
\end{lemma}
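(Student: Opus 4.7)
The plan is to argue by contradiction. Suppose, after passing to a subsequence, that $P_j = (x_j, y_j, z_j)\to P_\infty\in\gamma$ and that $|\langle \partial_x,\bar\nu\rangle_{\olg}(P_j)|\ge c>0$ for some fixed $c>0$. Apply the hyperbolic isometry
\[
\phi_j(x,y,z) := \Bigl(\tfrac{x}{x_j},\,\tfrac{y-y_j}{x_j},\,\tfrac{z-z_j}{x_j}\Bigr),
\]
which sends $P_j$ to the fixed basepoint $Q_* := (1,0,0)$. Since $\phi_j$ is simultaneously a Euclidean similarity, the Euclidean angle between $\bar\nu$ and $\partial_x$ is preserved, so on $\tilde Y_j := \phi_j(Y)$ we still have $|\langle \partial_x,\bar\nu\rangle_{\olg}|(Q_*)\ge c$. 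Moreover $\phi_j$ is an intrinsic isometry of the surfaces, so $\calE(\tilde Y_j) = \calE(Y)\le M$ and $g$-geodesic balls around $Q_*$ in $\tilde Y_j$ correspond isometrically to $g$-geodesic balls around $P_j$ in $Y$.

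Next I would extract a smooth totally geodesic blow-up limit. For every fixed $R>0$, the intrinsic ball $B_Y(P_j,R)$ eventually leaves any compact subset of $Y$ because $P_j\to \partial_\infty Y$; hence, by absolute continuity of $|A|^2_g\,d\mu_g$, which is integrable because $\calE(Y)<\infty$,
\[
\int_{B_{\tilde Y_j}(Q_*,R)} |A|^2_g\, d\mu_g \;=\; \int_{B_Y(P_j,R)} |A|^2_g\, d\mu_g \;\longrightarrow\; 0.
\]
Interior $\e$-regularity for Willmore surfaces (Theorem~I.5 in \cite{riv1}) then yields $\|A\|_{\calC^k(B_{\tilde Y_j}(Q_*,R/2))} \to 0$ for every $k$ and $R$, and a diagonal argument produces a subsequence converging in $\calC^\infty_{\mathrm{loc}}$ on compact subsets of the interior of $\HH^3$ to a complete totally geodesic surface $Y_\infty$ through $Q_*$. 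The preserved Euclidean angle at $Q_*$ forbids $Y_\infty$ from being a vertical plane, so it must be a Euclidean hemisphere with center on $\{x=0\}$ and radius $r = 1/|\langle \partial_x,\bar\nu\rangle|(Q_*)\le 1/c$; its ideal boundary is a Euclidean circle $C\subset \{x=0\}$ of radius $r$.

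The desired contradiction comes from confronting $C$ with the ideal boundaries $\phi_j(\gamma)$. Near $P_\infty$, the Lipschitz hypothesis on $\gamma$ confines $\gamma$ to a Euclidean double cone of opening $<\pi$ about some line $\ell\subset\{x=0\}$; since $\phi_j$ is a Euclidean similarity fixing the direction of $\ell$, for $j$ large the intersection $\phi_j(\gamma)\cap B_{R_0}(0)$ with any fixed Euclidean ball $B_{R_0}(0)$ lies in the same double cone about $\ell$, and so does any Hausdorff limit of such intersections. On the other hand, the very same ``intrinsic balls leave compact sets'' argument, applied to arbitrary sequences of points in $\tilde Y_j$ escaping to the ideal boundary inside $B_{R_0}(0)$, shows $|A|_g\to 0$ uniformly near the ideal boundary there; combined with the $\calC^\infty_{\mathrm{loc}}$ interior convergence and the transversality of $Y_\infty$ to $\{x=0\}$ at each point of $C$, this forces $\tilde Y_j$ to remain graphical over the (near-vertical) tangent plane of $Y_\infty$ all the way down to height $0$ in a neighborhood of each $R\in C$, producing boundary points in $\phi_j(\gamma)$ arbitrarily close to $R$. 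Thus $C$ would have to lie in the Hausdorff limit of $\phi_j(\gamma)$, contradicting the cone containment since a Euclidean circle of positive radius cannot fit inside a double cone of opening $<\pi$. I expect the main obstacle to be precisely this last descent step, i.e.\ the passage from $\calC^\infty_{\mathrm{loc}}$ interior convergence of the surfaces to convergence of their ideal boundaries; proper embeddedness of each $\tilde Y_j$ and the absence of closed connected components in $Y$ must be used to rule out $\tilde Y_j$ bending away from the hemisphere before it reaches $\{x=0\}$.
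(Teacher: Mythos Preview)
Your approach diverges from the paper's at the point where the blow-up limit $Y_\infty$ has been identified as a hemisphere. The paper never attempts to track the asymptotic boundaries $\phi_j(\gamma)$; instead it stays entirely in the interior. From the hemispherical limit it extracts nearby points $Q_j \in Y$ with horizontal tangent plane (the ``tops'' of the approximate hemispheres), observes that $x(Q_j)\to 0$, and then invokes a minimax/mountain-pass argument: between two such local maxima of $x|_Y$ there must lie an index-one critical point $Q_j'$. Rescaling about $Q_j'$ produces a second totally geodesic limit $Y_*'$ with horizontal tangent at $(1,0,0)$ whose Euclidean principal curvatures inherit opposite signs from the saddle structure. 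But a totally geodesic surface with a horizontal tangent is necessarily a hemisphere, whose principal curvatures at the top have the same sign --- contradiction. Note that this argument never uses the Lipschitz hypothesis on $\gamma$; only finiteness of the energy is needed.

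Your route through the boundary is plausible, but the ``descent'' step, which you correctly flag as the main obstacle, is a genuine gap and is not repaired by the sketch you give. The difficulty is that smallness of $|A|_g$ near the ideal boundary does \emph{not} by itself force $\tilde Y_j$ to remain graphical over the tangent plane of $Y_\infty$ down to height zero: on a hyperbolic geodesic ball of radius $1$ the surface is $\calC^1$-close to \emph{some} totally geodesic piece, but that piece may well be a small hemisphere rather than a vertical plane, so the Euclidean normal can rotate substantially across such a ball. Nothing in your outline rules out $\tilde Y_j$ capping off or escaping sideways through $\partial B_{R_0}(0)$ at small height before reaching $\{x=0\}$; in that case $\phi_j(\gamma)$ need not accumulate on $C$ at all. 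Indeed $\phi_j(\gamma)\cap B_{R_0}(0)$ can be empty, since the projection $(0,y_j,z_j)$ of $P_j$ to $\{x=0\}$ need not lie within distance $O(x_j)$ of $\gamma$. Ruling out such behavior requires exactly the kind of interior critical-point analysis that the paper performs directly.
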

\begin{proof}  Since $Y$ has finite energy, then $\int_{Y\cap \{x\le C x(P_j)\}} |A|^2\, d\mu\rightarrow 0$ for any $C > 0$.  
Now suppose that the assertion is false. Thus, passing to a subsequence if necessary, 
$\langle \partial_x,\bar{\nu}\rangle_{\olg}(P_j)\to \beta\ne 0$. Let $B_1(P_j)$ be the ball of radius $1$ around $P_j$ 
with respect to the metric $g$. Passing to a further subsequence, we may assume that $B_1(P_j) \cap B_1(P_k) = 
\emptyset$ for $j \neq k$. Then $\int_{B_1(P_j)} |A|^2\, d\mu\to 0$, since otherwise $\calE(Y)$ would be infinite. 

Now translate $Y$ horizontally and dilate by the factor $1/x(P_j)$ so that $P_j$ is mapped to $(1,0,0)$ and denote by
$Y_j$ the resulting sequence of surfaces. Since each $Y_j$ passes through the fixed point $(1,0,0)$ and
$\int_{Y_j\cap \{ x\le M\}} |A|^2\, d\mu\rightarrow 0$ for any $M > 0$,  we can invoke the a priori pointwise bounds 
for $|\nabla^p A_j|$ on a ball of any fixed radius around this fixed point using \cite[Theorem I.5]{riv1}. These
show that yet a further subsequence of the $Y_j$ converge in the $\calC^\infty$ topology on compact sets to a complete 
Willmore surface $Y_*$.

Since $\calE^{B_R(P_j)}(Y_j) \to 0$ for any $R > 0$, we see that $Y_*$ is totally geodesic, and hence is either
a vertical plane or a hemisphere; its slope at $(1,0,0)$ equals $\beta \neq 0$, so we must be in the latter case.  
This shows that there is a fixed constant $R = R(\beta) > 0$ such that if $j$ is 
large, then the ball $B_R(P_j)$ in $Y$ contains a point $Q_j$ where $T_{Q_j} Y$ is horizontal, i.e.\ 
parallel to $\{x=0\}$. 

We can assume (passing again to a further subsequence) that $x(Q_j)$ is strictly monotone decreasing, so 
a standard minimax argument shows that we may choose a sequence of points $Q_j' \in Y_j$ which are 
critical points of index one for the function
$x$. In other words, writing $Y_j$ as a graph $x = v(y,z)$ near $Q_j'$, then $v$ has a saddle at $Q_j'$.  We can
therefore translate horizontally and dilate by the factor $1/x(Q_j')$ to obtain a sequence $Y_j'$ of Willmore 
surfaces which converge locally in $\calC^\infty$ to a complete Willmore surface $Y_*'$ passing through the point $(1,0,0)$. 
By construction, for any $M>0$, 
\begin{equation}
\label{energy.to.zero}
\int_{Y'_j\cap \{x\le M \} }|A_j|^2\, d\mu_j\rightarrow 0.
\end{equation}
Using the interior curvature estimates that follow from the $\epsilon$-regularity in \cite{riv1}  again, 
we see that the convergence of $Y_j'$ to $Y_*'$ is $\calC^\infty$ 
near the point $(1,0,0)$, hence $Y_*'$ has a horizontal tangent plane at this point. Furthermore, the two 
principal curvatures at $Q_j'$ relative to the ambient Euclidean metric are $\kappa_1 \geq 0$ and 
$\kappa_2 \leq 0$, and these inequalities must persist in the limit. This means that $Y_*'$ cannot be a hemisphere. 
However, \eqref{energy.to.zero} implies that $\calE(Y'_*) = 0$, which yields a contradiction.  \end{proof}

An almost identical argument proves the 
\begin{lemma}
\label{no.spheres}
Let $Y$ be a fixed Willmore surface in $\HH^3$ with $\calE(Y) < \infty$.  Let $P_j$ be a sequence of
points in $\del_\infty Y$ and choose $\delta_j \searrow 0$. Denote by $B^+_{\delta_j}(P_j)$ 
the Euclidean half-ball centered at $P_j$ and with radius $\delta_j$. Assume that the sequence
of dilated translates $\delta_j^{-1} ( Y \cap B_1^+(P_j) - P_j)$ converges to a Willmore surface
$\tilde{Y}$. Then $\tilde{Y}$ must be a vertical half-plane. 
\end{lemma}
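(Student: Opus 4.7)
My plan follows the pattern of Lemma~\ref{asympt.vertical}: I would show that the rescaled surfaces $Y_j := \delta_j^{-1}(Y\cap B^+_1(P_j) - P_j)$ have Willmore energy vanishing on every compact subset of $\overline{\RR^3_+}$, conclude that the limit $\tilde Y$ is totally geodesic, and then use Lemma~\ref{asympt.vertical} itself to exclude the case that $\tilde Y$ is a Euclidean hemisphere.

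For the vanishing of energy, write $T_j(p) := \delta_j^{-1}(p - P_j)$ as the composition of horizontal translation by $-P_j$ (a hyperbolic isometry because $P_j \in \{x=0\}$) and dilation centered at $0$ with factor $1/\delta_j$; both are isometries of $\HH^3$, so $T_j$ preserves $|A|^2_g\, d\mu_g$. For any compact set $K \subset \overline{\RR^3_+}$ of Euclidean diameter at most $R$,
\[
\int_{Y_j \cap K} |A|^2_g \, d\mu_g \;=\; \int_{Y \cap T_j^{-1}(K)} |A|^2_g \, d\mu_g \;\le\; \int_{Y \cap B^+_{R\delta_j}(P_j)} |A|^2_g \, d\mu_g.
\]
Since $|A|^2_g\, d\mu_g$ is a finite Borel measure on $Y$, inner regularity produces, for each $\varepsilon > 0$, a compact set $F\subset Y$ with $\int_{Y\setminus F}|A|^2_g\, d\mu_g<\varepsilon$. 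Any compact subset of $Y \subset \RR^3_+$ lies in a slab $\{x \ge c\}$ for some $c>0$, so once $R\delta_j < c$ the half-ball $B^+_{R\delta_j}(P_j)$ is disjoint from $F$ and the right-hand side above is less than $\varepsilon$. Hence that integral tends to $0$ as $j\to\infty$.

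Combining this with the interior $\varepsilon$-regularity for Willmore surfaces (Theorem~I.5 of \cite{riv1}) upgrades $Y_j \to \tilde Y$ to $\calC^\infty_{\mathrm{loc}}$ convergence away from $\{x=0\}$ and forces $|A|_g \equiv 0$ on $\tilde Y$, so $\tilde Y$ is totally geodesic. Since $0\in\del_\infty \tilde Y$, $\tilde Y$ is either a vertical half-plane or a Euclidean hemisphere based on $\{x=0\}$. To rule out the hemisphere, observe that such a hemisphere has a unique topmost point $\tilde Q_*$ at which $\langle\partial_x,\bar\nu\rangle_{\olg}(\tilde Q_*)=\pm1$; pick $Q_j \in Y_j$ with $Q_j \to \tilde Q_*$ and $\langle\partial_x,\bar\nu\rangle_{\olg}(Q_j)\to \pm 1$. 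Their preimages $Q_j' := T_j^{-1}(Q_j) = P_j + \delta_j Q_j$ belong to $Y \cap B^+_1(P_j)$ and satisfy $|Q_j'-P_j|\to 0$; because $T_j^{-1}$ is a Euclidean similarity it preserves the unit normal up to sign, so $\langle\partial_x,\bar\nu\rangle_{\olg}(Q_j')\to \pm 1$ as well. Passing to a subsequence with $P_j \to P_*\in\gamma$ gives $Q_j'\to P_*\in\del_\infty Y$, and Lemma~\ref{asympt.vertical} forces $\langle\partial_x,\bar\nu\rangle_{\olg}(Q_j')\to 0$, contradicting $\pm 1$. Thus $\tilde Y$ must be a vertical half-plane. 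The main technical point is the upgrade from convergence of sets to $\calC^\infty$ convergence of second fundamental forms, which is provided by the energy vanishing together with Rivi\`ere's interior $\varepsilon$-regularity; once this is in place, the rest is a short geometric reduction to Lemma~\ref{asympt.vertical}.
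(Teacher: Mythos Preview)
Your argument is correct and follows essentially the same route as the paper, which simply notes that ``an almost identical argument'' to Lemma~\ref{asympt.vertical} proves this lemma. The one pleasant difference is that rather than repeating the minimax/saddle-point portion of that proof to exclude the hemisphere, you invoke Lemma~\ref{asympt.vertical} itself as a black box: pulling the topmost point of the putative hemisphere back to $Y$ produces interior points drifting to $\partial_\infty Y$ whose Euclidean normals stay bounded away from horizontal, in direct contradiction to Lemma~\ref{asympt.vertical}. This is a clean shortcut and avoids rewriting the saddle-point argument; the only small care needed is that Lemma~\ref{asympt.vertical} is stated for sequences converging to a \emph{point} of $\partial_\infty Y$, which you correctly handle by passing to a subsequence so that $P_j\to P_*$ (legitimate here since the boundary curves are compact in the paper's setting).
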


We next turn to proving local graphicality of any Willmore surface of finite energy near points where the 
boundary curve is Lipschitz. 

\begin{lemma}
\label{Lipschitz.lift}
Let $Y$ be a complete properly embedded Willmore surface in $\HH^3$ with finite energy and such that 
$\del_\infty Y = \gamma$ is a finite union of closed embedded rectifiable loops. 
Define $\calS_B$ to be the set of all points $P \in \gamma$ for which there exists a connected subarc 
$\gamma_P \subset \gamma$ which is a graph over a straight line $\ell_P \subset \RR^2$ containing $P$ 
which,  if we rotate and translate so that $\ell_P$ is the $y$-axis, has graph function $z=f(y)$,
$|y|\le \delta(P)$, satisfying $\mathrm{Lip}(f)\le B$. 

Then there exists an $h>0$, independent of $P$, such that the portion $Y'_{B(P,h \, \delta(P))}$ of the surface $Y$
is graphical over the half-disc $\{\sqrt{x^2+y^2} \le h \, \delta(P), \ z = 0\} $ with graph function $z=u(x,y)$, where
$u$ satisfies $|\nabla  u|\le 2B$. 
 \end{lemma}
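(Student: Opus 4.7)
I would argue by contradiction. If no uniform $h>0$ existed, one could extract a sequence $P_j\in\calS_B$ and $h_j\searrow 0$ such that, on the half-disc of radius $h_j\delta(P_j)$, the surface fails to be a horizontal graph with gradient bounded by $2B$. Since $\gamma$ is a compact embedded Lipschitz curve, the radii $\delta(P_j)$ are bounded above, so $\delta_j:=h_j\delta(P_j)\to 0$. After a horizontal rotation normalizing $\ell_{P_j}$ to the $y$-axis, I would apply the hyperbolic isometry consisting of the translation $P_j\mapsto 0$ and the dilation by $\delta_j^{-1}$, producing rescaled surfaces $\tilde Y_j:=\delta_j^{-1}(Y-P_j)$ with $\calE(\tilde Y_j)=\calE(Y)<\infty$. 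In these coordinates the failure now occurs at scale $1$: $\tilde Y_j$ is not graphical over $\{x^2+y^2\le 1,\ z=0\}$ with gradient $\le 2B$.

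Next, I would pass to a limit. The rescaled boundary graphs $\tilde f_j(y):=\delta_j^{-1}f_{P_j}(\delta_j y)$ are $B$-Lipschitz on $|y|\le 1/h_j\to\infty$ and vanish at $0$, so by Arzel\`a--Ascoli a subsequence converges uniformly on compact sets to a $B$-Lipschitz limit $\tilde f_*$. The interior $\e$-regularity for Willmore surfaces (Theorem I.5 of \cite{riv1}), combined with the uniform bound on $\calE(\tilde Y_j)$, yields on a further subsequence a Willmore limit $\tilde Y_*$ to which $\tilde Y_j$ converges in $\calC^\infty_{\mathrm{loc}}$ away from at most finitely many interior concentration points. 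Because $\delta_j\to 0$, Lemma~\ref{no.spheres} applies to this blow-up of the fixed surface $Y$ and forces $\tilde Y_*$ to be a vertical half-plane; in particular $\tilde f_*$ is linear with slope at most $B$, and $\tilde Y_*$ is a graph $z=\tilde u_*(x,y)$ with constant gradient $|\nabla\tilde u_*|\le B$.

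To close the argument, I must promote this to $\tilde Y_j$ itself being graphical over the half-disc of radius $1$ with gradient $\le 2B$ for $j$ large, contradicting the failure. In the interior $\{x\ge\e\}$ this is immediate from the $\calC^\infty_{\mathrm{loc}}$ convergence to the vertical half-plane $\tilde Y_*$. In the thin boundary strip $\{0\le x<\e\}$, Lemma~\ref{asympt.vertical} applied to $Y$ forces the Euclidean unit normal to become asymptotically horizontal as one approaches $\del_\infty Y$, which in the rescaled coordinates gives $\langle\del_x,\bar{\nu}\rangle$ uniformly small on $\tilde Y_j\cap\{x<\e\}$; together with the uniform Lipschitz bound on the $\tilde f_j$, this ensures the tangent planes project nondegenerately onto $\{z=0\}$ and yields graphicality up to the boundary with gradient arbitrarily close to $|\tilde f_j'|\le B$. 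The principal obstacle I anticipate is precisely this last step: since the $\e$-regularity for Willmore surfaces is purely interior, one must leverage the horizontality of the Euclidean normal from Lemma~\ref{asympt.vertical} together with an implicit function argument to transfer the graphicality and gradient bound of the limiting half-plane all the way up to $\{x=0\}$; once this boundary-strip control is in place the contradiction is immediate.
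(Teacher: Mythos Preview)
Your overall strategy is sound, but the boundary-strip step you flag as the principal obstacle is a genuine gap, and the fix you outline does not close it. Lemma~\ref{asympt.vertical} only tells you that $\langle\del_x,\bar\nu\rangle$ is small near $\{x=0\}$; this pins down one component of the unit normal but does nothing to prevent $\bar\nu$ from being close to $\del_y$ at interior points near the boundary. The Lipschitz bound on $\tilde f_j$ constrains the tangent to the \emph{boundary curve}, not the tangent plane of $\tilde Y_j$ at nearby interior points, and an ``implicit function argument'' of the kind you sketch would presuppose exactly the graphicality you are trying to establish. So if the bad points $\tilde Q_j$ (where the gradient bound fails) satisfy $x(\tilde Q_j)\to 0$ after your rescaling, the contradiction does not follow.

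The paper sidesteps this by choosing the rescaling differently. Rather than dilating by $\delta_j^{-1}$ about the boundary point $P_j$, it first selects a specific point $Q_j\in Y'_{B(P_j,h_j\delta(P_j))}$ where the angle between $\bar\nu(Q_j)$ and $\del_z$ exceeds $\arctan(2B)$, translates so that the $y$-coordinate of $Q_j$ vanishes, and dilates by $1/x(Q_j)$. This sends $Q_j$ to height $x=1$, a fixed interior location where the $\calC^\infty$ convergence to the limiting half-plane (furnished by Lemma~\ref{no.spheres}, using that $\calE^{B(P_j,h_j\delta_j)}(Y)\to 0$ since $Y$ has finite energy) applies directly. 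The angle condition at this interior point then immediately contradicts the slope bound $|\alpha|\le B$ for the limit. Your argument can be salvaged by performing precisely this second blow-up whenever $x(\tilde Q_j)\to 0$, but that is the paper's one-step proof carried out in two.
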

\begin{proof}  If this were false, then there would exist a sequence $P_j\in\gamma$, lines $\ell_j$ and 
graph functions $f_j:[-\delta_j,\delta_j]\rightarrow \RR$ for $\gamma$ with Lipschitz constant $B$, and sequences
of numbers $h_j\to 0$ and points $Q_j\in Y'_{B(P_j, h_j\delta_j)}$ with coordinates $(x_j, y_j, z_j)$ (using coordinates $(x,y,z)$ 
where $P_j$ is the origin and $\ell_{P_j}$ is the $y$-axis), such that angle between the unit 
(Euclidean) normal $\bar{\nu}(Q_j)$ to $Y$ at $Q_j$ and $\del_z$ is greater 
than $ \arctan(2B)$. 

Since $Y$ has finite energy, we have that $\calE^{B(P_j, h_j\delta_j)}(Y) \to 0$, so a contradiction can
be drawn by a blow-up argument. Translate so that $y_j=0$, then dilate by the factor $\frac{1}{x_j}$ 
to obtain a sequence of surfaces $\tilde{Y}_j$. By construction, $ \del_\infty \tilde{Y}_j$ is graphical over the $y$-axis 
at least over the interval $|y|\le \frac{1}{h_j}$, with Lipschitz constant $B$. Furthermore, the angle 
between $\bar{\nu}$ and $\del_z$ at $(1,0,0)$ is greater than $\pi/2-\arctan(2B)$. However, $Y_j$ 
converges to a vertical half-plane $Y_*$, and since the 
convergence is $\calC^\infty$ away from the boundary by \cite{CS}, the angle condition at $(1,0,0)$ is  preserved in the limit. 
However by Lemma \ref{no.spheres}, from the Lipschitz bound on the graph 
function $f_j$, we see that $Y_* = \{z= \alpha y+ \beta, x > 0\}$ 
for some $\alpha$ with $|\alpha|\le B$. This contradicts the angle condition at $(1,0,0)$. 
\end{proof}

We also need a slight variant of this. 
\begin{lemma}
\label{uniform.height} Consider a sequence of complete Willmore surfaces $Y_j$, the closures of which pass through 
the $(0,0,0)$. Assume that the subdomains $Y'_{j,B(0,3)}$ satisfy $\calE^{B(0,3)}(Y_j)\to 0$,
and that $\gamma_j=\partial_\infty Y'_{j,B(0,3)}$ is a graph $z = f_j(y)$ over the interval $|y| \leq 2$
with $f_j \in \calC^1$ and $|f'_j(y)|\le \delta$ for some $\delta>0$. 
Then there exists an $\e_0 (\delta)> 0$  such that $Y'_{j,B(0,3)}$ is a graph $z = u_j(x,y)$ 
over the rectangle $\calR :=\{ 0\le x\le  \e_0(\delta), |y| \leq  2\}$, and $|\nabla u_j|\le 2\delta$ on $\calR$. 
\end{lemma}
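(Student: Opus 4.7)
The plan is to argue by contradiction and blow-up, closely modeled on the proof of Lemma~\ref{Lipschitz.lift}; the vanishing local energy $\calE^{B(0,3)}(Y_j)\to 0$ plays the role of the finite-energy hypothesis in that lemma. Suppose no such $\e_0$ exists. After passing to a subsequence, we produce bad points $Q_j = (x_j, y_j, z_j) \in Y'_{j,B(0,3)}$ with $x_j\to 0$, $|y_j|\le 2$, and Euclidean unit normal $\ovl{\nu}(Q_j)$ making angle greater than $\arctan(2\delta)$ with $\del_z$. The alternative non-graphical configuration is subsumed into this: any failure of the vertical projection $(x,y,z)\mapsto(x,y)$ to be one-to-one on $Y_j$ must produce a critical point where $\del_z$ lies in the tangent plane.

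The key preparatory step is to choose $Q_j$ so that $\tau_j := (z_j - f_j(y_j))/x_j$ is uniformly bounded. For each $j$, define $\e_j^*$ as the largest $\e$ for which $Y_j$ is a graph $z = u_j(x,y)$ over $[0,\e]\times[-2,2]$ with $|\nabla u_j|\le 2\delta$. Then $\e_j^* > 0$ (since $\ovl{Y_j}$ is $\calC^2$ up to the $\calC^1$ boundary $\gamma_j$), and the failure hypothesis forces $\e_j^* \to 0$ along a subsequence. On this initial graph $|u_j(x,y) - f_j(y)|\le 2\delta\, x$ by integrating $\del_x u_j$ from the boundary, so any bad point chosen on the outer boundary of the maximal graphical region---where either $|\nabla u_j|$ saturates at $2\delta$ or graphical continuation fails---satisfies $|z_j - f_j(y_j)|\le 2\delta\, x_j$, i.e.\ $|\tau_j|\le 2\delta$. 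We now apply the hyperbolic isometry $\Phi_j(p) = (p - P_j)/x_j$ with $P_j := (0,y_j,f_j(y_j))\in\gamma_j$. The rescaled surfaces $\tilde Y_j := \Phi_j(Y_j)$ pass through $(1,0,\tau_j)$, have vanishing energy on every fixed Euclidean ball (since $\Phi_j^{-1}$ contracts it into $B(0,3)$), and have boundary at infinity the graph of $\tilde f_j(s) := (f_j(y_j + x_j s) - f_j(y_j))/x_j$, which is uniformly $\delta$-Lipschitz, vanishes at $0$, and is defined on an interval exhausting $\RR$.

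Interior $\e$-regularity (Theorem~I.5 of \cite{riv1}) yields $\calC^\infty_{\mathrm{loc}}$ convergence along a further subsequence on $\{x>0\}$ to a Willmore surface $\tilde Y_*$ of zero energy, hence totally geodesic, while Arzel\`a-Ascoli gives Lipschitz convergence $\tilde f_j \to \tilde f_*$ on all of $\RR$. Since $\del_\infty \tilde Y_*$ contains the unbounded graph of $\tilde f_*$, it cannot be a bounded circle and must therefore be a straight line; thus $\tilde Y_*$ is the vertical half-plane $\{z = \alpha y\}$ with $|\alpha|\le\delta$, whose tangent plane at every point makes angle $\arctan|\alpha|\le\arctan\delta<\arctan(2\delta)$ with $\del_z$. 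On the other hand, Euclidean similarities preserve angles with the fixed direction $\del_z$, so the tangent plane of $\tilde Y_j$ at $(1,0,\tau_j)$ inherits from $Q_j$ the angle $>\arctan(2\delta)$; extracting $\tau_j \to \tau_*$ by compactness and using smooth convergence at $(1,0,\tau_*)$ yields the desired contradiction (either via the angle mismatch, or, if $\tau_*\ne 0$, directly from the fact that $(1,0,\tau_*)\notin\tilde Y_*=\{z=\alpha y\}$ while $(1,0,\tau_j)\in\tilde Y_j$).

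The principal difficulty is the preparatory bound on $\tau_j$: without it the blown-up point $(1,0,\tau_j)$ escapes to infinity in the $z$-direction, the vertical half-plane limit carries no information at the relevant location, and the angle comparison fails. The graphical-extension argument handles this by observing that, throughout the initial region where $Y_j$ is graphical with gradient $\le 2\delta$, the vertical separation of the graph from $\gamma_j$ is automatically controlled by $2\delta\cdot x$; selecting $Q_j$ on the boundary of this region is what keeps $\tau_j$ bounded.
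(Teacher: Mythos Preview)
Your proof is correct and follows essentially the same blow-up/contradiction approach as the paper. The paper's version is more compressed: it first fixes $\e_0(\delta)$ by a hemisphere calibration and uses interior convergence to a totally geodesic limit to force $x(P_j)\to 0$, whereas you obtain $x_j\to 0$ directly via the maximal-graphical-extension argument and are explicitly careful about bounding $\tau_j=(z_j-f_j(y_j))/x_j$ so that the rescaled bad point stays in a compact region---a point the paper leaves implicit.
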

\begin{proof}
This is proved essentially as before. We pick $\e_0(\delta)$ small enough so that 
all functions $u_j(x,y)$ with $(x,y)\in [0,\e_0(\delta)]\times [-2,2]$  
whose graphs are portions of upper half-spheres and satisfy $|\partial_yu_j|_{x=0}|\le\delta$ 
also satisfy $|\nabla u_j(x,y)|\le \frac{3\delta}{2}$ for $(x,y)\in [0,\e_0(\delta)]\times [-2,2]$. 
If the claim were to fail for this $\e_0(\delta)$, we could choose points $P_j \in Y_j$ contained in the portion of $Y_j$ 
which is graphical over this rectangle where 
$|\nabla u_j(P_j)| > 2\delta$. Since $Y_j \to Y_*$ smoothly away from $x=0$ and $Y_*$ must be a portion 
of a hemisphere,  we must have $x(P_j)\to 0$.  Now dilate by the factor $x(P_j)^{-1}$; this produces a sequence of Willmore
surfaces $\tilde{Y}_j$ which converge to a vertical half-plane $Y_*$ which meets the $xy$-plane at 
a small angle bounded above by $|\arctan(\delta)|$, but such that the corresponding
graph functions $\tilde{u}_j$ satisfy $|\nabla \tilde{u}_j| > 2\delta$ at a fixed point $(0,0,1)$. However,
convergence in this dilated setting is still smooth away from $x=0$ by \cite{riv1}, so this is a contradiction. 
\end{proof}

\section{The $\e$-regularity results: Small energy controls boundary regularity.}
\subsection{Vanishing energy implies $\calC^1$ boundary convergence.}
We first state a key proposition, and then deduce Theorems~\ref{eregularity} and \ref{eregularity2}  from it. 

For any $\zeta \in (0,1]$, consider the (unique) circle $C^\zeta_*$ in the $yz$-plane 
which is tangent to the $y$-axis at the origin and whose
 graph function $f^\zeta_*(y)$ over the interval $[-1,1]$ satisfies $(f^\zeta_*)'(1)=\zeta$.
Pick $\zeta_0$ small enough so that for each $\zeta\in (0,\zeta_0]$ the circle $C^\zeta_*$
 is contained in the open ball $B(0, \frac{5}{\zeta})\subset \RR^2$.

\begin{proposition}
\label{theclaim} Let $\zeta$ and $\zeta_0$ be as above. Suppose that $Y_j $ is a sequence of connected 
Willmore surfaces in $\HH^3\bigcap B(0,2)$ with boundaries at infinity $\del_\infty Y_j = \gamma_j$,
and the remaining boundary components on the outer boundary of this half-ball. Assume $\calE(Y_j)\le M<\infty$.
 We assume furthermore that: 
\begin{enumerate}
\item[a)] Each $\gamma_j$ is the graph of a function $f_j$ over $[-1,1]$, which satisfies $|f_j(y)-f_j(y')|\le \zeta|y-y'|$ for all $y, y' \in [-1,1]$
and $f_j(0)=0$, $f'_j(0)=0$, $f'_j(1)=\zeta$;
\item[b)] $\LipRad^\zeta_{\gamma_j}(P) \ge \frac{2-|P|}{A}$, for some fixed $A>0$;
\item[c)] ${\cal E}^{B(0,2)}_p(Y_j)\to 0$ as $j\to\infty$.
\end{enumerate}
Then $f_j\rightarrow f^\zeta_*$ in $\calC^1([-1,1])$. 
\end{proposition}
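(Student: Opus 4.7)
I approach this by contradiction combined with a compactness argument. Suppose a subsequence of $\{f_j\}$ fails to converge to $f_*^\zeta$ in $\calC^1([-1,1])$. The uniform energy bound $\calE(Y_j) \le M$, together with interior $\e$-regularity for Willmore surfaces (Theorem I.5 in \cite{riv1}), Lemma~\ref{Lipschitz.lift}, and hypothesis (b), allow me to represent a uniform neighbourhood of each $Y_j$ as a horizontal graph $z = u_j(x,y)$ over some half-disc in $\{z=0\}$ with $|\nabla u_j| \le 2\zeta$. Extracting a further subsequence, $u_j \to u_*$ in $\calC^\infty_{\mathrm{loc}}(\{x>0\})$ and $f_j = u_j(0,\cdot) \to f_*$ uniformly on $[-1,1]$ by Arzel\`a--Ascoli. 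Because $\calE_p(Y_j) \to 0$ and $f \ge 5$, the unweighted energy vanishes on every compact interior set, so $\Aring_{Y_*} \equiv 0$. Combined with the orthogonal meeting at $\del_\infty\HH^3$ from Lemma~\ref{asympt.vertical}, $Y_*$ is a portion of a totally geodesic hemisphere and $\gamma_* = \{(y, f_*(y))\}$ is a Euclidean circular arc through the origin.

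The central task is to upgrade the uniform $\calC^0$ convergence $f_j \to f_*$ to $\calC^1$ convergence. For each fixed $y \in [-1,1]$ and each small $x_0 > 0$, I write
\[
f_j'(y) \;=\; \partial_y u_j(0,y) \;=\; \partial_y u_j(x_0, y) \;-\; \int_0^{x_0} \partial_x \partial_y u_j(x,y)\, dx.
\]
The smooth interior convergence makes the first term on the right converge uniformly in $y$ to $\partial_y u_*(x_0,y)$, which in turn tends to $f_*'(y)$ as $x_0 \to 0$, since the limiting hemisphere $Y_*$ is smooth up to its boundary. The whole argument therefore reduces to proving the quantitative boundary-to-interior transfer estimate
\[
\lim_{x_0 \to 0}\ \limsup_{j \to \infty}\ \sup_{y \in [-1,1]} \int_0^{x_0} \bigl| \partial_x \partial_y u_j(x,y) \bigr|\, dx \;=\; 0.
\]

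The main obstacle is establishing this transfer estimate from the vanishing of $\calE_p(Y_j)$, and this is exactly what the technical machinery of sections~3--6 is built to supply. My plan is: (i) on a suitable neighbourhood, pass to isothermal coordinates in which $\partial_x \partial_y u_j$ is controlled pointwise by a specific component of the second fundamental form $A_j$ times the conformal factor; (ii) obtain a uniform gauge bound on the conformal factor by adapting the M\"uller--Sverak / De Lellis--M\"uller theory of \cite{MS, DLM}, which requires first doubling $Y_j$ across $\{x=0\}$, grafting the doubled surface into a round sphere with two reflection symmetries, and M\"obius-normalizing its position so that the hypotheses of \cite{MS, DLM} apply, while checking that none of this preparation destroys the putative first-derivative jump at the origin; (iii) use a mean value inequality for the harmonic-map-into-deSitter realization of the Willmore equation to produce a pointwise bound on the relevant component of $A_j$; and (iv) convert $\int_0^{x_0} |\partial_x \partial_y u_j|\, dx$ into a two-dimensional integral of $|A_j|^2$ against the weight $f^{2p}$, exploiting the asymptotic $f \sim |\log x|$ near the boundary to absorb the logarithmic factor from the change of variables. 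It is precisely in step (iv) that the \emph{weighted} rather than the unweighted energy becomes indispensable. Once this estimate is in hand, $\calC^1$ convergence follows, the normalizations $f_j(0)=0$, $f_j'(0)=0$, $f_j'(1)=\zeta$ pass to the limit, and the only circular arc tangent to the $y$-axis at the origin with derivative $\zeta$ at $y=1$ is $f_*^\zeta$; hence $f_* = f_*^\zeta$, contradicting the assumed failure of $\calC^1$ convergence.
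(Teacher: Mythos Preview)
Your overall strategy coincides with the paper's: contradiction plus interior compactness to a totally geodesic limit, the De Lellis--M\"uller isothermal gauge with the doubling/grafting/M\"obius preparation you describe in step (ii), and the mean-value inequality for the harmonic map into de Sitter space in step (iii) to extract a pointwise bound on $|\Abring|$ from the weighted energy. This is exactly how the paper proceeds in \S\S 3--5.

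There is, however, a concrete gap in step (i). In isothermal coordinates $(q,w)$ with conformal factor $e^{2\phi_j}$, the mixed partial of the graph function is \emph{not} controlled pointwise by a component of the second fundamental form alone. The identity the paper uses (equation \eqref{nabla.ext.curv}) is
\[
(\bar A_j)_{12}\,\nabla_\nu z \;=\; \partial_{12} u_j \;-\; \partial_1 u_j\,\partial_2\phi_j \;-\; \partial_1\phi_j\,\partial_2 u_j,
\]
so after multiplying by $e^{-\phi_j}$ and integrating along $\{w=0,\ 0\le q\le 1\}$ one obtains \emph{two} line integrals on the right of \eqref{after.integration}. The first, $\int_0^1 |(\bar A_j)_{12}|\,e^{-\phi_j}\,dq$, is what your steps (iii)--(iv) handle via the de Sitter argument and the weighted energy; this is Proposition~\ref{decay}. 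But the second,
\[
\int_0^1 e^{-\phi_j}\,\partial_2\phi_j\,\partial_1 u_j\,dq,
\]
involves a derivative of the conformal factor and admits no useful pointwise bound. The paper controls it by an entirely separate mechanism (Proposition~\ref{bound.phi.conseq}): it is reinterpreted as one boundary flux term of a divergence integrated over adjacent rectangles, and the bulk terms are estimated using the Codazzi identity $-\Delta_{\bar g}\phi_j = (4\bar H_j^2 - |\bar A_j|^2)/4$ together with the $W^{2,1}$ and $W^{1,2}$ bounds on $\phi_j$ coming out of the De Lellis--M\"uller estimate. This second integral is bounded by the \emph{unweighted} energy, not the weighted one. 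Your outline omits this flux argument, and without it the transfer estimate in your display cannot be closed.
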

In other words, if the weighted energies of a sequence of Willmore surfaces converge to zero in some fixed half-ball, and if the boundaries 
at infinity of these Willmore surfaces are uniformly Lipschitz in the qualitative sense above, then these boundaries must converge 
to a particular circular arc defined by the normalization, and the convergence is actually in $\calC^1$.

For future reference, we state another proposition which guarantees $\calC^1$ convergence of boundary curves 
under slightly different assumptions on the boundary curves. 
This will be used in the proof of the Theorem \ref{removability} above. 

\begin{proposition}
\label{ereg.prp2}
Assume that $Y_j$ is a  sequence of connected Willmore surfaces in $\HH^3\bigcap B(0,2)$, with boundaries 
at infinity $\del_\infty Y_j = \gamma_j$, and with all other boundaries contained in the outer boundary
of the half-ball $B(0,2)$. Assume $\calE(Y_j)\le M<\infty$. Assume further that:
\begin{enumerate}
\item[a)] $Y_j$ is the graph of a function $z=u_j(x,y)$ over the half-disc $\{x^2+y^2\le 2, z=0\}$.
\item[b)]$|\nabla u_j|\le 2\zeta\le 1/10$ for $x>0$ and $f_j(y):=u_j(0,y)$ is a Lipschitz function with Lipschitz constant $\zeta$. 
\item[c)]  $\calE_p(Y_j)\rightarrow 0$, and $Y_j$ converges to the upper half-disc $\{z=0, x^2+y^2< 2\}$. 
\item[d)] All $f_j$ are differentiable at $y=0$. 
\end{enumerate}
Then $\lim_{j\rightarrow \infty} f'_j(0)=0$.
\end{proposition}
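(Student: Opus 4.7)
The plan is to express $f'_j(0)$ via the fundamental theorem of calculus in the $x$-variable, relating it to an interior value plus a line integral of mixed derivatives. Since $f_j(y)=u_j(0,y)$ is differentiable at $y=0$, we have
\begin{equation}
f'_j(0) \;=\; \partial_y u_j(1,0) \;-\; \int_0^{1}\partial_x\partial_y u_j(x,0)\,dx.
\end{equation}
The interior term vanishes in the limit: hypothesis (c) forces $\calE(Y_j)\to 0$ on any compact subset of $\{x>0\}$, and interior $\epsilon$-regularity for Willmore surfaces (Theorem I.5 of \cite{riv1}) then upgrades this to $\calC^\infty$ convergence of $Y_j$ to the flat half-disc near $(1,0,0)$, whence $\partial_y u_j(1,0)\to 0$.

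The main task is to bound the integral by a quantity tending to zero. The gradient bound $|\nabla u_j|\le 2\zeta\le 1/10$ ensures that the induced metric on $Y_j$ is uniformly comparable to the standard hyperbolic metric on the half-plane $\{x>0,\,z=0\}$, and that the Euclidean second fundamental form $\bar A_j$ is equivalent, up to factors bounded in $\zeta$, to the Hessian $\nabla^2 u_j$. Thus $|\partial_x\partial_y u_j|(x,0)\le C|\bar A_j|(P_x)$ with $P_x:=(x,0,u_j(x,0))$, and the conformal relation \eqref{AAbar} together with the bounded gradient translates a pointwise bound on $|\Aring_j|_g$ at $P_x$ into one on $|\bar A_j|(P_x)$, up to a factor of $x^{-1}$ and a contribution from $\bar H_j$ controlled in the small-energy regime by the Willmore equation.

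The principal obstacle is to bound $|\Aring_j|_g$ pointwise in terms of the weighted energy. The strategy, following the framework developed in \S\S 4--6, is to realize the Gauss map of $Y_j$ as a harmonic map into the de Sitter space and apply a mean value inequality on a hyperbolic ball $B_g(P_x,1)$ of fixed intrinsic radius around $P_x$. For $x$ small the pole distance function $f_\calO$ is uniformly comparable to $|\log x|$ on this ball and the hyperbolic area is bounded, so the weighted energy assumption yields
\begin{equation}
|\Aring_j|^2_g(P_x)\;\le\;C\,\frac{\calE_p(Y_j)}{|\log x|^{2p}}.
\end{equation}
Combining everything, $|\partial_x\partial_y u_j|(x,0)\le C\,x^{-1}|\log x|^{-p}\sqrt{\calE_p(Y_j)}$ for $x\in(0,1/2]$, while for $x\in[1/2,1]$ smooth interior convergence makes the integrand vanish uniformly. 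Since $p>1$, the function $x^{-1}|\log x|^{-p}$ is integrable on $(0,1/2]$ (substitute $u=-\log x$ to reduce to $\int_{\log 2}^\infty u^{-p}\,du<\infty$), so the full integral is bounded by $C\sqrt{\calE_p(Y_j)}+o(1)\to 0$; combined with the interior estimate this gives $f'_j(0)\to 0$. The gradient bound $|\nabla u_j|\le 2\zeta$ ensures the conformal factor on $Y_j$ is uniformly controlled, so the ``preparation'' steps (grafting, M\"obius normalization) used to prove Proposition \ref{theclaim} are not needed here and the $L^2$-to-$L^\infty$ estimates of \S\S 4--6 apply directly.
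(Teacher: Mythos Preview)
Your approach has a genuine gap that lies precisely where you dismiss the paper's machinery as unnecessary.

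In graph coordinates $(x,y)$ the metric $\bar g_{ab}=\delta_{ab}+u_{j,a}u_{j,b}$ is \emph{not} conformal to $dx^2+dy^2$, so the off-diagonal component $\bar A_{j,xy}$ is not a component of the trace-free second fundamental form. Concretely,
\[
\partial_x\partial_y u_j \;=\; \sqrt{1+|\nabla u_j|^2}\,\bigl(\Abring_{j,xy}+\bar H_j\,g_{xy}\bigr),\qquad g_{xy}=u_{j,x}u_{j,y}.
\]
The harmonic-map/mean-value argument controls only $|\Abring_j|$, since $|d\Phi|^2=\tfrac12|\Abring|^2$; it says nothing about $\bar H_j$. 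Your sentence ``a contribution from $\bar H_j$ controlled in the small-energy regime by the Willmore equation'' is the unsupported step. In fact $\bar H_j=(H_j^{\rm hyp}-\bar\nu_j^{\,x})/x$ is generically of order $x^{-1}$ near the boundary, and the small \emph{trace-free} weighted energy $\calE_p(Y_j)\to 0$ gives no pointwise bound on $\bar H_j$ (round spheres have $\Abring=0$ but $\bar H\neq 0$). Showing that $\int_0^1|\bar H_j|\,|u_{j,x}u_{j,y}|\,dx\to 0$ therefore requires a separate argument; a pointwise bound is hopeless, and one is forced into a flux-type estimate over a two-dimensional region.

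This is exactly why the paper insists on isothermal coordinates $(q_j,w_j)$: in those coordinates the off-diagonal $(\bar A_j)_{12}$ is \emph{automatically} trace-free, so the first line integral in \eqref{after.integration} involves only $\Abring_j$ and is handled by the de~Sitter harmonic-map estimate (Proposition~\ref{decay}). The mean-curvature information is pushed into the Christoffel symbols, i.e.\ into derivatives of the conformal factor $\phi_j$, producing the second line integral $\int e^{-\phi_j}\partial_2\phi_j\,\partial_1 u_j\,dq_j$. Bounding that integral (Proposition~\ref{bound.phi.conseq}) requires the $W^{1,2}$ and $W^{2,1}$ control on $\phi_j$ in \eqref{the.bounds}, which comes from the De\,Lellis--M\"uller theory and in turn requires the ``preparation'' (reflection, extension, grafting into a balanced sphere). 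Your gradient bound $|\nabla u_j|\le 2\zeta$ only shows the metric is $\calC^0$-close to flat; it does not give the derivative bounds on a conformal factor that the flux argument needs. So the preparation steps are not optional in Setting~2; they are the mechanism by which the mean-curvature contribution is made tractable.
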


\subsection{Proposition~\ref{theclaim} implies $\e$-regularity} 
We now show that Theorems~\ref{eregularity} and \ref{eregularity2} can be deduced from Proposition~\ref{theclaim}. 

The argument is by contradiction. Assume that for every $j \geq 1$ there exist surfaces $Y_j\in \calM$, 
points $P_j\in \gamma_j:=\del Y_j$ (and radii $R_j \leq 1$ in the context 
of Theorem \ref{eregularity}) such that $\calE^{B(P_j,R_j)}_p(Y_j)<\frac{1}{j}$, 
yet $\LipRad^\zeta_{\gamma_j}(Q_j)<\zeta \frac{R_j-|P_jQ_j|}{10}$ for some $Q_j\in \gamma_j \cap B(P_j,R_j)$. Observe 
that the points $Q_j$ must lie in the {\it open} ball $B(P_j,R_j)$ since $\LipRad^\zeta(\gamma_j) > 0$.

Select a point $Z_j$ in the open ball $B(P_j,R_j)$ so that 
\[
\inf_Q \, \frac{\LipRad^\zeta_{\gamma_j}(Q)}{(R_j - |P_j Q|)} = \frac{\LipRad^\zeta_{\gamma_j}(Z_j)}{(R_j - |P_j Z_j|)},
\]
and note that this ratio is less than $\zeta/10$. Let $\delta_j:=\LipRad^\zeta_{\gamma_j}(Z_j)$.  By translation and rotation,
assume that $Z_j = 0$ and $T_{Z_j}\gamma_j$ is the $y$-axis. Now dilate by $\delta_j^{-1}$.  
Denote the rescaled surface by $\tilde{Y}_j$ and the rescaled boundary curve by $\tilde{\gamma}_j$;
note that $|\tilde{\gamma}_j| = 100\pi\delta_j^{-1}$. Thus $\tilde{\gamma}_j$ is a graph $z=f_j(y)$ over $[-1,1]$, 
with $f_j(0)=0$, $f'_j(0)=0$ and $|f_j(y)-f_j(y')|\le\zeta |y-y'|$.  Moreover, because $[-1,1]$ is the maximal interval 
on which the Lipschitz norm of $f_j$ is bounded by $\zeta$, 
we must have either $|f'_j(-1)|=\zeta$ or $|f_j'(1)| = \zeta$, and to be definite we suppose that $f_j'(1) = \zeta$ for each $j$. 

The translated and rescaled ball $\tilde{B}_j$ contains $B(0,\frac{5}{\zeta})$. Furthermore, by the choice of $Z_j$ and the dilation, 
we see that there exists an $\eta>0$ such that  for each $P\in \tilde{\gamma}_j\bigcap B(0,\frac{5}{\zeta})$,
$\LipRad^\zeta_{\tilde{\gamma}_j}(P)\ge \eta$. 

We claim that $\tilde{\gamma}_j \to C^\zeta_*$ in $\calC^1$. Assuming this for the moment, 
we show that this leads to a contradiction in Theorems \ref{eregularity} and \ref{eregularity2}.

For Theorem \ref{eregularity2}, the contradiction is immediate. Indeed, the curves $\tilde{\gamma}_j$
intersect the circle $\partial \tilde{B}_j$, which contradicts the fact that $\tilde{\gamma}_j
\rightarrow C^\zeta_*$, which lies strictly in the interior of $\tilde{B}_j$. 

As for Theorem \ref{eregularity}, let $g_j: [-50\pi\delta_j^{-1}, 50\pi\delta_j^{-1}] \to \RR^2$ 
parametrize $\tilde{\gamma}_j$ by arclength, so the length along the curve between 
$g_j(0)$ and $g_j(s)$ is $|s|$; similarly, let $g_*: [-50\pi\delta_j^{-1},50\pi\delta_j^{-1}]\rightarrow C_*^\zeta$ 
be a (multi-covering) arclength parametrization of $C^\zeta_*$. 

Observe that $|C^\zeta_*|=2\pi R_\zeta$, 
with $R_\zeta=\sqrt{1+\frac{1}{\zeta^2}}$.  
Our claim gives that $g_j(s)\rightarrow g_*(s)$ in $\calC^1([-\pi R_\zeta,\pi R_\zeta])$, so in particular, 
\[
\lim_{j\to \infty} g_j(-\pi R_\zeta) = \lim_{j \to \infty} g_j(\pi R_\zeta), \ \ 
\lim_{j\to \infty} g'_j(-\pi R_\zeta) = -\lim_{j\to \infty}g_j'(\pi R_\zeta).
\]
This shows that $\lim_{j\to \infty} |\tilde{\gamma}_j|=|C^\zeta_*|$. On the other hand, we know that $|\tilde{\gamma}_j|
=100\pi\delta_j^{-1}$, which is impossible since $|C_*^\zeta|=2\pi R_\zeta$. 

\medskip

\noindent {\it Proof that Proposition~\ref{theclaim} implies $\gamma_j\rightarrow C^\zeta_*$ in $\calC^1$:} Let $2\zeta'$
be the length of the arc in $C^\zeta_*$ which is  a graph over the interval $y\in [-1,1]$.
By Proposition~\ref{theclaim}, $g_j(s)\rightarrow g_*(s)$ for $s\in [-\zeta', 
\zeta']$. Let $M>0$ be the largest number in $[0,\pi R_\zeta]$ such that
$g_j(s)\rightarrow g_*(s)$ in $\calC^1([-M,M])$.  We must prove that $M=\pi R_\zeta$, and 
moreover, for any small $\e >0$ and $j$ sufficiently large, that there exists an $\e_j > 0$ with
$\lim_{j \to \infty} \e_j = \e$ and $g_j(-\pi R_\zeta-\e_j)=  g_j(\pi R_\zeta-\e)$. 
The first claim ensures that $\gamma_j([-\pi R_\zeta,\pi R_\zeta]) \to C_*^\zeta$, 
while the second implies that $\gamma_j(s)$ closes up on a small extension of the interval 
$[-\pi R_\zeta,\pi R_\zeta]$. 

The first part is proved by contradiction: Assume $M<\pi R_\zeta$, and consider the pairs $(g_j(M), g'_j(M))$.
These converge to $(g_*(M), g'_*(M))$, so for $j$ large they lie in the open set $\calU$ where $\LipRad$
is bounded below by some $\eta>0$. Now let $\ell_j$ be the tangent line to $\tilde{\gamma}_j$ at $g_j(M)$. Consider the intervals 
of length $\eta$ centered at $g_j(M)$ on each $\ell_j$.  After translation, rotation and dilation by the factor $\eta^{-1}$,
the rescaled $\tilde{\gamma}_j$ can be written as the graphs of functions $\phi_j$ on $[-1,1]$. Applying
 Proposition~\ref{theclaim} to these functions, we see that $\phi_j\to f_*$ 
in $\calC^1$. Hence $g_j\rightarrow g_*$ on a larger interval $[-M', M']$, which contradicts
the maximality of $M$. 

As for the second part of the claim, note that the argument above shows that for $|\tau|\le \eta$ we have
\[
\lim_{j\to \infty} (g_j(-\pi R_\zeta-\tau),g'_j(-\pi R_\zeta-\tau))= \lim_{j\to\infty} 
(g_j(\pi R_\zeta-\tau),-g'_j(\pi R_\zeta-\tau)),
\]
because of the lower bound $\LipRad^\zeta\ge\eta$ and the $\calC^1$ convergence of the $g_j$ on 
$[-\pi R_\zeta-\tau, -\pi R_\zeta+\tau]$ to an arc of $C_*^\zeta$. 

Now, assume that for some fixed $\e>0$, there exists a subsequence in $j$ such that $g_j(-\pi R_\zeta-\epsilon)
\ne g_j(\pi R_\zeta-s)$ for any $s\in (0,2\e)$. In particular this says that $g_j(t)$ does not ``close up'' 
for $t\le -\pi R_\zeta$ and $t\ge \pi R_\zeta$. 

This gives a sequence of values $\tau_j\in [-50\pi\delta^{-1}_j, -\pi R_\zeta]\bigcup [\pi R_\zeta, 
50\pi\delta^{-1}_j]$ such that $\tau_j\to \tau_*$, $g_j(-\tau_j)\to P$, with $P\in C^\zeta_*$, yet 
$g'_j(\tau_j)\to T_*$ for some vector $T_*$ which is transverse to the tangent vector $T$ of $C^\zeta_*$ at $P$.  
However, if $j$ is large enough, then $g_j(\tau_j)\in {\cal U}$, and hence 
$\LipRad^\zeta_{\gamma_j}(g_j(\tau_j))\ge \eta>0$. But this implies that $\gamma_j$ must 
{\it self-intersect} near $P$, which contradicts that the boundary curves are embedded. $\Box$

\subsection{An overview of the strategy}
In the next two sections, we prove Propositions \ref{theclaim} and \ref{ereg.prp2}. In a nutshell, both results show,
in slightly different settings, that if the weighted energies of portions of the  Willmore surfaces $Y_j\subset \HH^3$ converge to zero, 
then $\gamma_j=\partial_\infty Y_j$ must converge {\it in the $\calC^1$ norm} to the boundary curve $\gamma_*$ of 
a totally geodesic surface $Y_*$.  We stress that the convergence of the graphical 
portions of the sufaces $Y_j$ to $Y_*$ is $\calC^\infty$ away from
$\{x=0\}$; the novelty here is the $\calC^1$ convergence at the boundary.  

Since the argument has several steps, we now provide a moderately detailed outline of the strategy. 
If the results were false, we could find a sequence of Willmore surfaces $Y_j$ satisfying
the hypotheses but for which the $\calC^1$ convergence fails at some boundary point. 
Thus, having written the boundary curves graphically, we assume that there exists $y_0\in [0,1]$ such that 
$\lim_{j\to \infty} f'_j(y_0)= b_1 \neq b_2 = f'_*(y_0)$. Because the local energy converges to zero, the limit
$Y_*$ is totally geodesic, and the convergence is $\calC^\infty$ away from $\{x=0\}$.  Furthermore,
at $\{x=0\}$, $f_j \to f_*$ in $\calC^\beta$ where the graph of $f_*$ is a circular arc. 

Compose with a suitable sequence of rotations, reflections  and inversions so that we can 
assume that $(y_0, f_j(y_0)) = (0,0)$ and (maintaining the names of all surfaces and curves)
that $Y_*$ is a portion of the vertical plane $\{z=0\}$. By assumption b) of Proposition \ref{theclaim},
each ${\gamma}_j$ is the graph of a functions $f_j$ defined on a fixed interval $[-1,1]$, 
and the limiting curve $\gamma_*$ is the graph of $f_*=0$ on this same interval. The hypothesis is
that $\lim_{j\to \infty}f_j'(0) = \alpha > 0$, although $f_*'(0) = 0$.

The argument proceeds in two steps. We first show that there exists a sequence of hyperbolic isometries 
$\varphi_j$ such that the surfaces $\varphi_j(Y_j)$ satisfy all the assumptions of Propositions \ref{theclaim}
and \ref{ereg.prp2} (including the jump in the limit of the first derivatives), but so that some fixed portion 
of $\varphi_j(Y_j)$ is covered by isothermal coordinates, the associated conformal factor of which is uniformly
bounded. This construction relies crucially on ideas in \cite{DLM}, many of which go back to the influential 
paper \cite{MS}.  The work here will involve modifying
some arguments in \cite{DLM}, which is possible because of some special features of our setting,
to ensure that the jump in the first derivative has a fixed size $\alpha-\beta>\frac{\alpha}{2}$.

However, we then use particular properties of these isothermal cordinate systems to prove that no such
jump in the limit of the first derivatives can occur. Writing $\varphi_j(Y_j)$ as the graphs of functions $u_j$, 
and denoting the isothermal coordinates by $(q_j, w_j)$, the idea is to control $\partial_{w_j} u_j|_{(0,0)}$
using that $\del_{w_j}u_j\to 0$ as $j\to \infty$ uniformly along $\{x=1\}$. The relationship between
these derivatives at $x=0$ and $x=1$ is obtained using two integrals, the first of the mixed component 
of the second fundamental form of $\varphi_j(Y_j)$ with respect to the Euclidean metric, and the second
depends on a derivative of the conformal factor.  We show that these integrals are bounded in terms of 
$\calE_p(\varphi_j(Y_j))$ and hence converge to $0$.  
The estimate for the first integral uses a realization of Willmore surfaces as harmonic
 maps into the $(3+1)$-dimentional deSitter space. 
In Euclidean coordinates, the energy integrand for this map turns out precisely to be the trace-less second 
fundamantal form $|\Abring|^2$. This, together with the harmonic map 
equation and the strong subharmonicity of the distance function on our surfaces 
yield bounds on $|\Abring|$ which are {\it integrable} in $x$. It is at this point exactly that the boundedness of 
{\it weighted} Willmore  energy (as opposed to the regular Willmore energy) is used. 
The control of the second integral
follows from interpreting it as one term in a flux formula whose interior term is controlled by $\calE(\varphi_j(Y_j))$.
 
\begin{remark}
The jump of the first derivative of $\gamma_j$ can also be described in terms of the Euclidean coordinate function 
$z$ restricted to the surface $Y_j$. Indeed, the jump condition is the same as 
\begin{equation}
\label{jump'} 
|\lim_{j\to\infty}\bar{\nu}_j(z)-\bar{\nu}_*(z)|=\alpha>0, 
\end{equation}
where $\bar{\nu}_j$ and $\bar{\nu}_*$  are the Euclidean unit tangent 
vectors to $\del_\infty Y_j$  and $\del_\infty Y_*$ at $(0,0,0)$. 
\end{remark}

\section{Uniform isothermal parametrizations}
\label{uniform.isothermal}
We now choose a sequence of  hyperbolic isometries $\varphi_j$ which map the surfaces $Y_j$ to a new sequence
of surfaces which satisfy the assumptions of our propositions (in particular they converge to a vertical half-plane) but 
such that some fixed  portions of these rescaled surfaces admit isothermal coordinates $(q_j,w_j)$, the conformal factors
of which are uniformly bounded in $\calC^0$, $W^{2,1}$ and $W^{1,2}$. We must also ensure that the transformed surfaces 
still exhibit a jump in first derivative at the origin.

Let us put this into context. In their well-known paper \cite{MS} (see also \cite{H}),
M\"uller and Sverak show that surfaces of finite total curvature in $\RR^3$ admit isothermal parametrizations 
with bounded conformal factors.  This argument was extended by DeLellis and M\"uller \cite{DLM} to obtain 
a uniformization of spheres $\Sigma\subset \RR^3$, proving in particular that if $\int_\Sigma |\Abring|^2\, d\mu< 8\pi$,
then one can find a conformal map $\Psi:S^2\to \Sigma$ for which the conformal factor is controlled by this energy.
In order to deal with the noncompactness of the conformal group, which implies the existence of such 
maps with conformal factor having arbitrarily large supremum, they impose a normalizing balancing condition 
for the total curvature restricted to certain hemispheres.  Our argument below follows this idea, 
with one key difference: we obtain this balancing not intrinsically by precomposing with conformal maps of $S^2$, but
extrinsically, by post-composing by M\"obius transformations of $\RR^3$. This is necessary for ensuring that the 
jump in first derivative still occurs in these isothermal coordinates.  
This extrinsic balancing may not be viable for arbitary spheres $\Sigma\subset \RR^3$ close to $S^2$,
but the surfaces we consider already have two reflection symmetries so only one extra balancing condition 
needs to be attained.
 
This construction is used in the proofs of Propositions \ref{theclaim} and \ref{ereg.prp2} in slightly different settings,
so we prove the present result in two different settings as well. These involve different hypotheses on the 
boundary curves $\gamma_j = \del_\infty Y_j$ (assumed as always to pass through the origin). 
The $\gamma_j$ are $\calC^1$ or Lipschitz, respectively, with uniform control on the norms, 
and in the second setting, we assume that $\gamma_j$ is differentiable at the origin. Let us now 
describe these more carefully.  In the following, and throughout the rest of this section, we write 
\[
D_+(a) = \{(x,y,0): x^2 + y^2 \leq a^2,\ x \geq 0\}, \quad D(a) = \{(x,y,0): x^2 + y^2 \leq a^2\}
\]
for the half-disk or disk of radius $a$ in the vertical plane $\{z=0\}$. 

\medskip

\noindent {\bf Setting 1:} $Y_j$ is a sequence of incomplete Willmore surfaces, where each $Y_j$ is a horizontal graph
$z=u_j(x,y)$ over $D_+(3)$ with $u_j\in \calC^2$,  $||u_j||_{W^{2,2}}\le M<\infty$, $u_j(0,0)=0$ and $\del_y u_j(0,0)=\alpha>0$. 
We assume that $|\nabla u_j|_{\olg} \le \zeta\le 1/20$, and finally that $\calE(Y_j) \leq \mu<2\pi$ and $Y_j \to 
Y_* = D_+(3)$.  

\medskip 

\noindent {\bf Setting 2:}  $Y_j$ is a sequence of incomplete Willmore surfaces which are again horizontal graphs 
$z=u_j(x,y)$ over $D_+(3)$ with $u_j(0,0)=0$ and $u_j\in W^{2,2}$, $||u_j||_{W^{2,2}}\le M<\infty$, and $u_j\in \calC^2$ away 
from $\{x=0\}$. We assume that $(0,0)$ is a point of differentiability for $u_j$ 
and $\del_y u_j (0,0)=\alpha>0$. We also assume that $y \mapsto u_j(0,y)$ is Lipschitz with constant $\zeta \le 1/20$,
and furthermore, $|\nabla u_j|\le 2\zeta$ for $x>0$. 
Finally, suppose that $\calE(Y_j) \leq 2\pi$ and $Y_j \to Y_* = D_+(3)$.  

\medskip

Recalling that $\alpha$ is the jump in the derivative, choose any number $\beta$ with $0 < \beta \ll \alpha$. 
Consider the straight line $\ell_\beta = \{z = \beta y\}$ in the horizontal plane $\{x=0\}$. Since the curves 
$\gamma_j$ converge to a segment in the $y$-axis containing the subinterval $[-1,1]$, then for $j$ large, 
there must exist values  $-1 < y_j^- < 0 < y_j^+ < 1$ such that the two points $F_j^\pm=(0,y^\pm_j,u_j(y^\pm_j))$ both 
lie on the line $\ell_\beta$. We assume that $y_j^+$ is chosen as large as possible in the interval $(0,1)$, and similarly
for $y_j^-$. Since $\gamma_j={\rm Graph}(u_j|_{x=0})$ converges to the line $\ell_0 = \{z=0\}$, it is necessarily
the case that $|F_j^\pm| \to 0$. Let $R_{-\beta}$ denote the rotation of the $yz$-plane by the small negative 
angle which sends $\ell_\beta$ to $\ell_0$; thus $R_{-\beta}(F_j^\pm) = (\pm |F_j^\pm|,0)$. 

Suppose, to be definite, that $|F_j^+| \geq  |F_j^-|$. Dilating the entire surface by the factor $|F_j^+|^{-1}$ pushes 
the point $F_j^+$ to $(1,0)$. 
The key observation is that this dilation of $R_{-\beta} Y_j$ converges to a vertical plane (since it must be totally geodesic 
and graphical over $\{z=0\}$), and since this plane contains the two points $(0,0)$ and $(1,0)$, it {\it must} be 
$\{z=0, x \geq 0\}$. This holds even though, before dilating, the sequence $R_{-\beta} Y_j$ converges to the 
vertical plane $\{y = -\beta z, x \geq 0\}$. Denote this dilated, rotated surface by $\tilde{R}_{-\beta}(Y_j)$. Note also that 
our assumed $W^{2,2}$ bound implies that $\int_{Y_j}|\Abar|^2d\overline{\mu}\le M$, and hence given 
$\gamma>0$ and fixing $0<\beta_-<\beta_+\ll \alpha$, then for $j$ large enough there exists some 
$\beta \in (\beta_-,\beta_+)$ such that 
\[
\int_{\tilde{R}_{-\beta_j}(Y_j)\bigcap \{1/4\le x^2+y^2+z^2\le 9\}}|\Abar|^2d\overline{\mu}\le \gamma.
\]
\begin{remark}
\label{special.beta}
By this observation, we can pick a sequence $\beta_j, 0<\beta_-\le \beta_j<\beta_+\ll \alpha$  such that: 
\begin{equation}
\label{total.curv.zero}
\int_{\tilde{R}_{-\beta}(Y_j)\bigcap \{1/4\le x^2+y^2+z^2\le 9\}}|\Abar|^2d\overline{\mu}=o(1).
\end{equation}
We make this choice hereafter. 
\end{remark}
For simplicity, now reset the notation and write the rotated dilated surfaces as $Y_j$, with boundary curves
$\gamma_j$, graph functions $u_j$, etc. 
\begin{lemma}
\label{DeLell.appl}
Consider a sequence of incomplete Willmore surfaces $Y_j$ which are graphs $z=u_j(x,y)$ over $D_+(3)$ with $|\nabla u_j|\le 2\zeta$,
${\rm Lip}(u_j|_{x=0})\le \zeta$, $8\calE(Y_j)\le \pi$, $\int_{Y_j\cap \{1/4\le x^2+y^2+z^2\le 9\}}|\Abar|^2d\overline{\mu}\rightarrow 0$, 
$u_j(0,0)=0, u_j(0,1)=0$ and $u_j\rightarrow 0$, where the convergence is in $\calC^\infty$ away from $\{x=0\}$ and in $\calC^{0,\alpha}$ 
up to  $x=0$. Assume further that there is a jump in the first derivative at the origin:
\begin{equation}
 \label{new.jump}
\lim_{j\to \infty} \del_y u_j(0,0) - \del_y u_*(0,0) \ge \alpha-2\beta_j> \frac12 \alpha. 
 \end{equation}
Then there exist M\"obius transformations $\varphi_j$ and open sets $\calU_j\subset Y_j$ such that 
$\tilde{Y}_j=\varphi_j(\calU_j)$ are graphs $z=\tilde{u}_j(x,y)$ over $D_+(2)$ with $|\nabla \tilde{u}_j|\le 4\zeta$, 
$\tilde{u}_j(0,0)=0, \tilde{u}_j(0,1)=0$ and 
\begin{equation}
 \label{new.jump'}
\lim_{j\to \infty} \del_y \tilde{u}_j(0,0) - \del_y \tilde{u}_*(0,0) \ge \alpha-2\beta_j> \frac12 \alpha.
\end{equation}
Furthermore, there exist isothermal coordinate charts $(q_j, w_j)$ centered at the origin and covering the region
$\varphi_j(\calU_j)$ with $q_j = 0$ along $\gamma_j$, such that $C'^{-1}\le |\nabla q_j|_{\olg} \le C'$ for 
some constant $C' > 0$ which depends only on $\sup_j \calE(Y_j)$.  Finally, the conformal factor 
$\phi_j$ associated with the coordinates $q_j,w_j$ satisfies the estimates: 
 \begin{equation}
 \label{the.bounds}
||\phi_{j}||_{\calC^0(\varphi_j(\calU_j))} +||\phi_{j}||_{W^{1,2}(\varphi_j(\calU_j))}+||\phi_j||_{W^{2,1}(\varphi_j(\calU_j))}\leq
C \int_{\varphi_j (\calU_j)} | \Abring_{j}|^2\, d\overline{\mu}+o(1).
\end{equation}
\end{lemma}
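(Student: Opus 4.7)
The plan is to realize a modification of $Y_j$ as a piece of a closed topological sphere $\Sigma_j\subset \RR^3$ with small total curvature and two reflection symmetries, then apply the uniformization of De Lellis--M\"uller \cite{DLM}, handling the non-compactness of the conformal group \emph{extrinsically} (through a M\"obius transformation $\varphi_j$ of $\RR^3$) rather than intrinsically as in \cite{DLM}. First I would double $Y_j$ across $\{x=0\}$: by Lemma~\ref{asympt.vertical} the surface meets $\{x=0\}$ asymptotically orthogonally, so the reflected-doubled graph $\hat u_j$ is $\calC^{1,\alpha}$ across the axis and defines a Willmore graph over the full disc $D(3)$. Next I would graft $\hat u_j$ outside the annular region $\{1/4\le x^2+y^2\le 9\}$ into a round sphere arranged symmetrically about the plane $\{z=0\}$; by \eqref{total.curv.zero} this grafting costs only $o(1)$ total curvature, so the resulting topological sphere $\Sigma_j$ satisfies $\int_{\Sigma_j}|\Abring|^2\, d\bar{\mu}\le 2\calE(Y_j)+o(1)<8\pi$ for $j$ large, and inherits two reflection symmetries (across $\{x=0\}$ from the doubling, across $\{z=0\}$ from the symmetric grafting).

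Next I would apply \cite{DLM} to $\Sigma_j$ after a M\"obius normalization. The non-compactness of the conformal group forces DLM to impose a balancing of curvature on hemispheres, arranged intrinsically by precomposing the conformal chart with automorphisms of $S^2$. In my setting the two reflection symmetries of $\Sigma_j$ reduce this to a one-parameter balancing equation, which I would solve within the one-parameter family of M\"obius transformations of $\RR^3$ that commute with both reflections (e.g.\ inversions centered on the axis $\{x=0,z=0\}$ on the side opposite the central region of $\Sigma_j$), via a continuity/degree argument analogous to \cite[\S 3]{DLM}. Applying DLM to $\varphi_j(\Sigma_j)$ then produces a conformal parametrization whose restriction to a neighborhood of $\gamma_j$ gives the chart $(q_j,w_j)$ with conformal factor $\phi_j$ satisfying \eqref{the.bounds}; the $\{x=0\}$-symmetry forces $\gamma_j\subset \{q_j=0\}$, and the $\calC^0$ bound on $\phi_j$ gives the two-sided control of $|\nabla q_j|_{\bar g}$.

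Finally I would check that $\tilde Y_j:=\varphi_j(\calU_j)$ is still a horizontal graph over $D_+(2)$ with $|\nabla \tilde u_j|\le 4\zeta$ and that the jump \eqref{new.jump'} survives. By construction the pole of $\varphi_j$ lies on the far side of $\Sigma_j$, and the normalization $u_j(0,0)=u_j(0,1)=0$ (combined with the choice of balancing family) ensures that two uniformly separated points near the origin are fixed to uniformly bounded distance; hence $\varphi_j$ is uniformly bi-Lipschitz on a fixed neighborhood of the origin with distortion tending to $1$. The graphical representation then transfers with only a mild slope loss (so the $2\zeta$ bound becomes $4\zeta$), and the first-derivative jump at the origin is preserved up to $o(1)$, hence exceeds $\alpha/2$ for large $j$.

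\textbf{Main obstacle.} The delicate point is the extrinsic balancing: one must verify that the continuity argument inside the one-parameter M\"obius family always yields a $\varphi_j$ whose pole stays a definite distance from the neighborhood of the origin where the jump lives. The two reflection symmetries built into the doubling-plus-symmetric-grafting construction are arranged precisely to leave enough geometric flexibility for this to be possible, while keeping $\varphi_j$ bi-Lipschitz with controlled constants near the region of interest. Once this is achieved, the $\calC^0$, $W^{1,2}$ and $W^{2,1}$ estimates of \cite{DLM} applied to $\varphi_j(\Sigma_j)$ (and restricted to $\varphi_j(\calU_j)$, with the grafting/transition contribution absorbed into the $o(1)$ term) directly yield \eqref{the.bounds}.
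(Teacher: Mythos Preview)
Your high-level strategy matches the paper's: double across $\{x=0\}$, embed into a closed sphere with two reflection symmetries, balance the remaining direction extrinsically via a one-parameter family of M\"obius transformations commuting with those symmetries, apply \cite{DLM}, and transfer back. Two steps in your sketch, however, do not work as written.

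First, the second reflection symmetry does not come from ``symmetric grafting.'' The doubled graph $z=\hat u_j(x,y)$ is itself not symmetric about $\{z=0\}$, so grafting it into a symmetrically placed round sphere does not produce a $\{z=0\}$-symmetric surface. The paper instead first \emph{extends} the doubled graph to agree with the plane $\{z=0\}$ outside a large disc (this is a separate lemma, and is where the annular-smallness hypothesis is actually used), then maps the plane to $S_1(0)$ by a M\"obius transformation $I$ so that the resulting surface coincides with the round sphere on a neighbourhood of the closed northern hemisphere, and \emph{then} replaces that northern hemisphere by the reflection of the perturbed southern one. It is this reflection step that manufactures the $\{z=0\}$-symmetry, at the price of a second doubling of the energy (so the bound feeding into \cite{DLM} is $4\calE(Y_j)+o(1)$, not the $2\calE(Y_j)+o(1)$ your sketch predicts).

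Second, your mechanism for preserving the jump and graphicality---that $\varphi_j$ is uniformly bi-Lipschitz near the origin with distortion tending to $1$---is not justified, and in fact the balancing parameter $t_j$ is \emph{not} shown to be bounded in $j$. The paper proceeds differently. Since every map involved is M\"obius (hence conformal), the angle between $\gamma_j$ and the $y$-axis at the origin is preserved exactly under $I$, $M_{t_j}$ and the stereographic projection back, so the jump $>\alpha/2$ survives automatically without any bi-Lipschitz control. The images $\tilde P_j,\tilde Q_j$ of $(0,0,0)$ and $(0,1,0)$ may nevertheless drift, so a further Euclidean dilation by $|\tilde P_j\tilde Q_j|^{-1}$ is applied to renormalize their positions; this leaves the conformal factor unchanged. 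Graphicality with $|\nabla\tilde u_j|\le 4\zeta$ is then established by a separate argument: one tracks how the family of lines parallel to $\partial_z$ is carried, under the successive M\"obius maps, to families of circles meeting the surface at angles within $2\arctan(2\zeta)$ of $\pi/2$.
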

\begin{remark}
\label{exists.extension}
Note, for future reference, that we actually prove that the surfaces $\varphi_j(\calU_j)$ are subregions 
of complete, smooth graphical surfaces ${Y}^\flat_j$ in $\RR^3$ which are reflection-symmetric 
across $\{x=0\}$. If $u_j^\flat(x,y)$ is the graph function of $Y^\flat_j$, then the (distorted) annular
regions $\{(x,y,u_j^\flat(x,y)), 2\le \sqrt{x^2+y^2}\le 4 \}$ of these larger surfaces are {\it not} Willmore with 
respect to the hyperbolic metric. On the other hand,  $u_j^\flat(x,y)=0$ for 
$\sqrt{x^2+y^2}\ge 5$; we denote this portion of $Y^\flat_j$ by $Y^\sharp_j$. 
The isothermal coordinates $(q_j,w_j)$ cover the entire surface $Y^\flat_j$, and the associated conformal factor $\phi_j$ 
satisfies \eqref{the.bounds} on all of $Y^\flat_j$ and $\phi_j\rightarrow 0$ as $\sqrt{x^2+y^2}\rightarrow \infty$.
In the first setting above, $\calE(Y^\flat_j)\to 0$. 
\end{remark}
\begin{remark} 
\label{translate.bounds}
The pointwise bound on $|\nabla q_j|_{\olg}$ follows from the $\calC^0$ bounds on $\phi_j$. 
Indeed, dropping the subscript $j$ momentarily, we have
\begin{equation}
\label{two.forms}
\olg =e^{2\phi}(dq^2+dw^2)=(1+(u_x)^2)dx^2+2u_x u_y dxdy+(1+(u_y)^2)dy^2,
 \end{equation}
so in particular
\[
\olg(\del_x, \del_x) + \olg(\del_y, \del_y) = 2+u_x^2 + u_y^2 = e^{2\phi}( |\del_x q|^2 + |\del_y q|^2 + |\del_x w|^2 + |\del_y w|^2).
\]
Using $|u_x|,|u_y|\le 1/10$ and $|dq|_{\overline{g}}=|dw|_{\overline{g}}$, $dq\perp dw$,  the equivalence of 
pointwise bounds on $\phi_j$ and $|\nabla q_j|_{\olg}$ follows directly. 
\end{remark}
\begin{proof} 
The main work is to establish the existence of the isothermal parametrization, so we concentrate on this; 
properties of the graphical representation are derived at the end.  

As described earlier, we apply a theorem of DeLellis and M\"uller \cite{DLM}, but there are a few technical points 
that must be addressed before we can do so.  First, the surfaces
in the statement of this theorem are local, so we must extend them to closed topological spheres. This
is done by first reflecting each $Y_j$ across the horizontal plane, then extending the resulting perturbed
disk to a surface which agrees with the vertical plane $\{z=0\}$ outside a large ball, then stereographically
projecting.  Next, to obtain a balanced configuration as described above (and in more detail below), we
can arrange for the perturbed $S^2$ to have two reflection symmetries immediately, then obtain the
third balancing condition by composing with an appropriate M\"obius transformation in $\RR^3$. 

The upshot is that we obtain isothermal coordinates $(q,w)$ which still detect the jump 
in the first derivative, and with $0 < C_1 \leq |\nabla q|, |\nabla w| \leq C_2$. 

\medskip

\noindent {\bf Reflection:}  We first reflect $Y_j$ across the horizontal plane to obtain a surface $Y_j'$ in 
$\RR^3$ invariant with respect to the vertical reflection $x \mapsto -x$. 
The doubled surface is graphical over $D(3)$, and has graph function $\tilde{u}_j \in W^{2,2}(D(3))$.  
This is straightforward to check using Lemma \ref{asympt.vertical}.
We change notation, denoting the doubled surface $Y'_j$ by $Y_j$ again. 

\medskip
 
\noindent {\bf Extension:}  We now claim that the doubled incomplete surface $Y_j$ can be extended 
to a complete surface $Y_j^{\mathrm{ext}}$ which is a graph over the entire vertical plane $\{z=0\}$ 
with graph function $u_j^{\mathrm{ext}}$  which vanishes when $x^2+y^2\ge 25$ and also satisfies
\begin{equation}
\label{energy.increase}
\calE(Y_j^{\mathrm{ext}}) \leq 2 \calE(Y_j)  + o(1).
\end{equation}
Notice that $Y_j^{\mathrm{ext}}$ is no longer Willmore in the transition annulus $4 \leq x^2 + y^2 \leq 9$. 

Since this construction is a bit lengthy, we defer it to \S 4 below, so let us grant it for the time being.

\medskip

\noindent {\bf Mollification:}  Since we apply the uniformization theorem later, it is convenient 
mollify these surfaces. We check convergence properties of the uniformization map for the 
mollified surfaces momentarily. The mollification is standard:
Choose $\psi\in \calC^\infty_c(\RR^2)$ with $\int\psi =1$ and set $\psi_\e(x,y):=\e^{-2}\psi(x/\e, y/\e)$.  
Then $Y_{j,\e}^{\mathrm{ext}}$ is the graph of 
$$
u^{\mathrm{ext}}_{j,\e}(x,y)=u^{\mathrm{ext}}_j*\psi_{\e} (x,y). 
$$
It is standard that for each $\e \in (0,1]$, $u^{\mathrm{ext}}_{j,\e}\in \calC^\infty$ and $||u^{\mathrm{ext}}_{j,\e}||_{W^{2,2}}
\to ||u^{\mathrm{ext}}_{j}||_{W^{2,2}}$.  Furthermore, given a priori $\calC^1$ or Lipschitz bounds on $u^{\mathrm{ext}}_j$, there 
are uniform $\calC^1$ bounds (independent of $\e$ and $j$) on $u^{\mathrm{ext}}_{j,\e}$.  In particular, 
\begin{equation}
\label{energy.convergence}
\int_{Y^{\mathrm{ext}}_{j,\e}}|\Abar_{j,\e}|^2d\bar{\mu} \to \int_{Y^{\mathrm{ext}}_j}|\Abar_{j}|^2d\bar{\mu}
\end{equation}
as $\e \to 0$. 

\medskip

\noindent {\bf Patching into a sphere with symmetries:} 
Let $I$ be the M\"obius transformation of $\RR^3$ which maps the plane $\{z=0\}$ 
to the sphere $S_1(0)$, normalized by requiring that $I( (0,0,0)) = (0,0,-1) := S$ (the south pole), $I(\infty) = (0,0,1) := N$ 
(the north pole), and so that $I$ carries the $y$-axis to the great circle $C := \{z=0\} \cap S_1(0)$ minus $N$. 
This map is fully determined by requiring that the image of the disc $D(2)$ equals the spherical 
cap in $S_1(0)$ of radius $1/10$ centered at $S$.  

Now let $Y'_{j,\e}:=I (Y_{j,\e} \cup \{\infty\})$; this is a slightly distorted sphere, where the distortion is localized near $S$.
This surface has one reflection symmetry, across the plane $x=0$, corresponding to the original vertical reflection
symmetry. By construction, $Y_{j,\e}'$ coincides with standard unit sphere in a neighbourhood of the closed northern 
hemisphere $S_1(0) \cap \{z \geq 0\}$, so we can form a new surface $\hat{Y}_{j,\e}$ by discarding this northern 
hemisphere and replacing it with a reflection of the southern hemisphere of $Y_{j,\e}'$. The surface we obtain this
way is smooth near its intersection with the horizontal plane $\{z=0\}$, and has two reflection symmetries, one across the 
plane $\{z=0\}$ and the other across $\{x=0\}$. 

Now, consider a uniformizing map $\psi_{j,\e}$ from the round sphere to $(\hat{Y}_{j,\e},{\olg}_{j,\e})$,
where $\olg_{j,\e}$ is the metric on $\hat{Y}_{j,\e}$ induced from $\RR^3$.  Because of the symmetries of 
$\hat{Y}_{j,\e}$, the map $\psi_{j,\e}$ can be chosen to be reflection-symmetric across the $xy$- or $yz$-planes. 
We can also assume that the conformal maps $\psi_{j,\e}$ converge in $W^{1,\infty}$ to a conformal map $\psi_j$ 
as $j\rightarrow \infty$.  This is done using an inversion, coupled with the graphicality property and \cite{MS}.
Indeed, first consider the inversion $\tilde{I}$ of $\RR^3$ which sends $(1, 0, 0)$ to infinity and fixes $(-1, 0,0)$. 
It follows (by the same argument as in the graphicality discussion below) that $\tilde{I}(\hat{Y}_{j,\e})$ is a graph 
over the plane $\{x=0\}$.  We thus obtain a graph function $x=f_{j,\e}(y,z)$ such that $f_{j,\e}=0$ for $y^2+z^2\ge M$,
where $M$ can be chosen independent of $\e$.  By the first paragraph in the proof of Theorem 5.2 in \cite{MS},
there exists a 1-parameter family of smooth conformal parametrizations $\hat{\psi}_{j,\e}:\mathbb{R}^2\rightarrow 
\tilde{I}(\hat{Y}_{j,\e})$ with $\hat{\psi}_{j,\e}\rightarrow 0$ at $\infty$, such that $\hat{\psi}_{j,\e}$ 
converges in $W^{1,\infty}$ to a parametrization $\hat{\psi}_j:\RR^2\rightarrow \tilde{I}(\hat{Y}_j)$.
Since the conformal factor $\hat{\phi}_{j,\e}$ associated to $\hat{\psi}_{j,\e}(y,z)$ is harmonic with respect 
to the flat metric on $y^2+z^2\ge M$, standard asymptotics results for harmonic functions on exterior 
domains give that $|\del^2 \hat{\phi}_{j,\e}|=o((x^2+y^2)^{-1})$, 
uniformly in $\e$ for $j$ fixed. This implies that the conformal maps $\psi_{j,\e}:=\tilde{I}^{-1}\circ 
\hat{\psi}_{j,\e}\circ \tilde{I}:S^2\rightarrow \hat{Y}_{j,\e}$ converge in $W^{1,\infty}$ to a conformal map 
$\psi_j:S^2\rightarrow \hat{Y}_j$. We consider these $\psi_{j,\e}$ hereafter. 
  
\medskip

\noindent {\bf Balancing the total curvature:} This is the key step in the derivation of our estimates. 
 As already mentioned,  \cite{DLM} requires that we find a conformal map $\psi_{j,\e}: S^2 \to \hat{Y}_{j,\e}$ 
which satisfies three separate balancing conditions: if $H_{\pm, b}$, $b=x,y,z$, denote the three pairs of hemispheres in 
$S^2$ centered along the coordinate axes with the same labels, then we demand that
\begin{equation}
\int_{\psi_{j,\epsilon} (H_{+,b})}  |\overline{A}_{j,\e}|^2 \, d\bar{\mu} = \int_{\psi_{j,\epsilon} (H_{-,b})}  |\overline{A}_{j,\e}|^2 \, d\bar{\mu},
\label{balanceb}
\end{equation}
for all three choices of $b$. Our conformal map $\psi_{j,\e}: S^2 \to \hat{Y}_{j,\e}$ clearly respects the 
two reflection symmetries of this target surface, and for this map, \eqref{balanceb} is satisfied for $b=x$ and $z$.

To obtain the third balancing condition, we modify $\psi_{j,\e}$ by composing it with a M\"obius transformation
$M_t$ which is a hyperbolic dilation with source $(0,-1,0)$ and sink $(0,1,0)$. Notice that $\psi_{j,\e}$ fixes 
these two points already. Each $M_t$ preserves the unit sphere $S_1(0)$. 
Now consider the family of surfaces $M_t( \hat{Y}_{j,\e})$. These all have the two original reflection 
symmetries simply because $M_t$ respects those reflections.  We claim that for each $j,\e$, there exists 
a unique $t_{j,\e}$ such that 
\[
\int_{M_{t_{j,\e}}\circ \psi_{j,\e} (H_{+,y}) } |\Abar_{j,\e}|^2 \, d\bar{\mu} = \int_{ M_{t_{j,\e}}\circ \psi_{j,\e} (H_{-,y})} |\Abar_{j,\e}|^2 \, d\bar{\mu}.
\]

To prove this, first note that if $\Sigma$ is a smooth closed surface in $\RR^3$ diffeomorphic to the sphere, then
$|\Abar_\Sigma|^2 = 2 |\Abring_\Sigma|^2 + 2K_\Sigma$, where $K_\Sigma$ is the Gauss curvature of $\Sigma$, hence 
\[
\int_\Sigma |\Abar_\Sigma|^2 \, d\bar{\mu} = 2\int_\Sigma |\Abring_\Sigma|^2 \, d\bar{\mu} + 8\pi.
\]
The two terms on the right are conformally invariant, and hence preserved if we apply any one of the maps $M_t$ to $\Sigma$;
therefore so is the left side.  We also remark here that by construction, $8\pi\le \int_{\hat{Y}_{j,\e}}|\Abar_{j,\e}|^2 \, 
d\bar{\mu}\le 10\pi$ for $j$ large. Now, $\hat{Y}_{j,\e}$ agrees with the standard round sphere in a small neighbourhood
around the two points $W = (0,-1,0)$ and $E = (0,1,0)$.  Recall also that $\psi_{j,\e}:S^2\rightarrow \hat{Y}_{j,\e}$ maps 
the points $ {\bf W}=(0,-1,0)$, ${\bf E}=(0,1,0)$ in $S^2$ to $W$, $E$ in $\hat{Y}_{j,\e}$. Choose a small disk centered at 
$\bf W$; its image under $\psi_{j,\e}$ will then be a small cap around $W$. Now, the image of this cap 
under $M_t$ with $t \gg \infty$ is a large spherical cap which covers almost the entire sphere except a small neighbourhood
of $E$, hence the integral of $|\Abar|^2$ over this region is very close to $8\pi$. 
Analogously, at $t\gg-\infty$ its image is a tiny spherical cap centered at $W$, hence the integral 
of $|\Abar|^2$ over this region is very close to $0$. Since the total energy of $\hat{Y}_{j,\e}$ is just slightly larger than 
$8\pi$, this gives the existence of the value $t_{j,\e}$, as claimed.  Since $\int_K |\Abar_{j,\e}|^2 \to \int_K |\Abar_j|^2$ 
and the conformal maps $\psi_{j,\e}\rightarrow \psi_j$ uniformly in all of $S^2$ as $\e \to 0$ for each fixed $j$, 
where $K$ is any fixed closed subset, this argument shows that there is a 
bound $|t_{j,\e}| \leq T_j$ which is uniform in $\e$. 

The preferred conformal transformation is now given by
\[
\Psi_{j,\e} = M_{t_{j,\e}} \circ \psi_{j,\e}:  S_1(0) = S^2 \longrightarrow M_{t_{j,\e}}(\hat{Y}_{j,\e}). 
\]
If $\olg_0$ is the standard round metric on $S_1(0)$ and $\hat{g}_{j,\e}$ is the metric on $M_{t_{j,\e}}(\hat{Y_{j,\e}})$ 
induced from the Euclidean metric in $\mathbb{R}^3$, then define $\phi_{j,\e}$ by 
\[
\Psi_j^* \hat{g}_{j,\e} = e^{2\phi_{j,\e}} \olg_0.
\]
Proposition 3.2 and Theorem 3.3  in \cite{DLM} and the $W^{2,1}$ estimates in their proof now give that
\begin{equation}
||\phi_{j,\e}||_{\calC^0} +||\phi_{j,\e}||_{W^{1,2}}+||\phi_{j,\e}||_{W^{2,1}}\leq C \int_{M_{t_{j,\e}} (\hat{Y}_{j,\e})} | \Abring_{j,\e}|^2\, d\bar{\mu}.
\label{Hardyest}
\end{equation}
As a brief hint of the idea of the proof of this fact, $\phi_{j,\e}$ is a solution of the semilinear elliptic PDE,
$\Delta_{g_0} \phi_{j,\e} = 1 - \hat{K}_{j,\e} e^{2\phi_{j,\e}} $, where $\hat{K}_{j,\e}$ is the Gauss curvature function on 
$M_{t_{j,\e}}(\hat{Y}_{j,\e})$. The main term $\hat{K}_{j,\e} e^{2\phi_{j,\e}}$ on the right has a `determinant structure', since it 
can be expressed via the pullback of the area form on $S^2$ by the Gauss map. After appropriately modified
stereographic projections (localized to be trivial in certain regions of the sphere)  this allows one to conclude that 
the right hand side lies in the Hardy space ${\mathcal H}^1(\RR^2)$, and from there the estimates 
follows from some important and well-known theorems in harmonic analysis. We refer to \cite{DLM} 
and for further details.

We now pass to the limit as $\e \to 0$. Because $t_{j,\e}$ is bounded uniformly in $\e$ for each $j$ we can pass to 
a subsequence (in $\e$) and assume that $t_{j,\e}\to t_j$; with no loss of generality, and possibly taking
a reflection, we assume that $t_j\ge 0$ for all $j$.  Following the argument in M\"uller-Sverak \cite{MS}, we 
obtain limiting functions $\psi_{j,\e} \to \psi_j$ and $\phi_{j,\e} \to \phi_j$, where 
\begin{equation}
\label{prethe.bounds}
||\phi_j||_{\calC^0} +||\phi_{j}||_{W^{1,2}}+||\phi_j||_{W^{2,1}}\leq C \int_{M_{t_{j}} (\hat{Y}_{j,\e})} | \Abring_{j}|^2\, d\bar{\mu}\le 
4C\calE(Y_j)+o(1).
\end{equation}
The second inequality here follows from (\ref{energy.increase}), (\ref{energy.convergence}) and 
the assumption that the total curvature in an annular region converges to zero. 
\medskip

\noindent {\bf Undoing the stereographic projection:}  For brevity, set $\overline{Y}_j = M_{t_j}(\hat{Y}_j)$. 
The points $P_j$ and $Q_j$ which correspond to the points of intersection $(0,0,0)$ and $(0,1,0)$ of the boundary 
curve and the $y$-axis in the original (dilated and rotated) surface $Y_j$ correspond under $M_{t_j} \circ \psi_j$ 
to new points, which we still label as $P_j$ and $Q_j$, on $\overline{Y}_j \bigcap \{x=0\}$. These 
lie on the circle $S_1(0)\bigcap \{x=0\}$. 

We have denoted by ${\bf E}$ and ${\bf W}$ the points $(0,1,0)$ and $(0,-1,0)$ in $S^2$, respectively; the corresponding points 
$(0,\pm 1, 0)$ in $\overline{Y}_j$ have been labelled $E$ and $W$. 
Let $B$  be the M\"obius transformation of $\RR^3$ which induces the stereographic projection 
from $W$ onto the plane $\{y=1\}$. Let $B^\sharp$ be the stereographic projection from $S^2 \setminus {\bf W}$ to $\RR^2$, 
which sends ${\bf E}$ to $0\in \mathbb{R}^2$ and $\bf W$ to $\infty$. 

If $\tilde{Y}_{j}:= B(\overline{Y}_{j})$, then define
$$
\tilde{\Psi}_j : \RR^2\to \tilde{Y}_{j}, \qquad \tilde{\Psi}_{j}:=  B\circ \Psi_{j}\circ (B^\sharp)^{-1}.
$$
This map is conformal, and determines the functions $q_{j}, w_{j}$ as the 
push-forwards of the flat coordinates $q,w$ on $\RR^2$, so that $\tilde{\Psi}_{j}^*(\olg)= 
e^{2\tilde{\phi}_{j}}(dq_{j}^2+dw_{j}^2)$.  The points $P_j, Q_j$ are mapped to points $\tilde{P}_j, \tilde{Q}_j$ 
on the line $\{y=1,x=0\}$

If $B(\tilde{P}_j, |\tilde{P}_j\tilde{Q}_j|)$ denotes the ball in $\RR^3$ centered at $\tilde{P}_j$ and with radius
$|\tilde{P}_j\tilde{Q}_j|$, then on $\tilde{Y}_{j}\bigcap B(\tilde{P}_j, |\tilde{P}_j,\tilde{Q}_j|)$, the conformal factor $\tilde{\phi}_{j}$ is
obtained by adding to $\phi_j$ a smooth function $w_j$ which is {\it a priori} bounded since it the conformal 
factor for a stereographic projection restricted to the domain $\tilde{Y}_{j}\cap B(\tilde{P}_j, |\tilde{P}_j,\tilde{Q}_j|)$ 
which is uniformly bounded away from the point that is mapped to infinity.  It is not hard to see that since 
$\tilde{Y}_{j}\cap B(\tilde{P}_j, |\tilde{P}_j,\tilde{Q}_j|)$ converges to a vertical half-plane 
through $\tilde{P}_j, \tilde{Q}_j$, $|w_j|_{\calC^2}=o(1)$. Thus we obtain isothermal coordinates $(q_j, w_j)$ on 
$\tilde{Y}_{j}$ and a conformal factor $e^{\phi_j}$ which satisfies (\ref{the.bounds}). 

\medskip

\noindent {\bf The final dilation and  the jump in the derivative:} 
Set $\tilde{Y}_{j}:= B^\sharp \circ M_{t_{j}} \circ I (Y_{j}^{\mathrm{ext}} ) \bigcap \{x\ge 0\}$; this is a surface with boundary 
$\tilde{\gamma}_{j} = B^\sharp \circ M_{t_{j}} (Y_{j}') \bigcap \{x=0\}$. Write $P_j:=\widetilde{\Psi}_{j}  (0,0)$ 
and $Q_{j}:=\widetilde{\Psi}_{j}(1,0)$; these both lie on the line $\{y=1,x=0\}$, and since all maps here are conformal, 
$\tilde{\gamma}_{j}$ makes an angle bigger than $\alpha/2$ with this line  at $P_{j}$. 
Translate and rotate so that $P_{j}$ is the origin and $Q_{j} = ( 0,d_j, 0)$. 

Let $F_{j}$ be the Euclidean dilation from the origin by the factor $d_{j}^{-1}$, so that $F_{j}(Q_{j}) = (0,1,0)$.
Note that $F_{j} (\tilde{\gamma}_{j})$ still makes an angle bigger than $\alpha/2$ with the $y$-axis at 
$(0,0,0)$. Pre- and postcomposing $\tilde{\Psi}_{j}$ by $F_{j}$ gives a conformal map 
$$
\widehat{\Psi}_{j} := F_{j}\circ \widetilde{\Psi}_{j} \circ F_{j}^{-1}:\RR^2\longrightarrow  F_j(\tilde{Y}_{j}),
$$ 
which leaves the conformal factor $\tilde{\phi}_{j}$ unchanged. 

Since $\tilde{Y}_{j}$ converge to a vertical plane, the curves $F_{j}(\tilde{\gamma}_j)$ converge 
in $\calC^\alpha$ to the $y$-axis as $j\to\infty$. The required hyperbolic isometries $\varphi_j$ are 
then just $\widehat{\Psi}_j$, and the domains $\calU_j$ are the preimages of the graph of the unit disc under these maps. 

\medskip

\noindent {\bf Graphicality:} 
We return finally to the claim that the surfaces $\widetilde{Y}_j$ remain graphical. Recall first that $Y_j$ is graphical 
over the disc $\{x^2+y^2 \le 9, z=0\}$, and that $|\nabla u_j|\le 2\zeta$.  Consider the family of straight lines 
$z \mapsto (x_0, y_0, z)$ parallel to the $z$-axis. Each of these meet $\widetilde{Y}_j$ at an angle $\Omega(x_0,y_0)$ 
which satisfies $|\Omega - \pi/2| < \arctan (2\zeta)$. Under the M\"obius transformation $I$, this family is transformed to a family 
of circles $\calC_N$, each passing through $(0,0,1)$ and meeting $S_1(0)$ orthogonally.  By conformality, 
if $C$ is one of these circles which intersects $I(\widetilde{Y}_j)$ at a point $Q$ with angle $\Phi(Q)$, then 
$|\Phi(Q)-\pi/2|\le \arctan (2\zeta)$. 

Now consider the set of circles $\calC_E$ passing through $(0,1,0)$ and intersecting $S_1(0)$ 
orthogonally.  For any point $Q\in I(\calD_j)$, consider the circles $C_N(Q) \in \calC_N$ and
$C_E(Q) \in \calC_E$ which pass through $Q$.  Since $\mbox{dist}( Q, S) \leq \arctan(1/10)$, 
the angle between $C_N(Q)$ and $C_E(Q)$, is at most $\arctan(2\zeta)$. Hence for every  $Q\in I({\cal D}_j)$, 
the angle $\Phi'(Q)$ between $C_E(Q)$ and the surface $I(\widetilde{Y}_j)$ satisfies $|\Phi'(Q)-\pi/2|\le 2 \arctan(2\zeta)$. 

Finally, note that the dilations $M_t$ preserve the family $\calC_E$. By conformality, for each point 
$Q\in M_{t_j}(I(\widetilde{Y}_j))$, the surface $M_{t_j}(I(\widetilde{Y}_j))$ makes an angle $\Phi'(Q)$ with $C_E(Q)$, 
where $|\Phi'(Q)-\pi/2|\le 2\arctan(2\zeta)$.  Recall that $B^\sharp$ maps each $C_E(Q)$ to a line parallel to the $z$-axis. 
By conformality again, these lines make an angle $\Phi'(Q)$ with $B^\sharp \circ M_{t_j}(I(\widetilde{Y}_j))$ at the 
point of intersection $Q$, where $|\Phi'(Q)-\pi/2|\le 2\arctan(2\zeta)$.  But this means precisely that 
$B^\sharp\circ M_{t_j}(I(\widetilde{Y}_j))$ is a graph over a disk of some 
graph function $\tilde{u}_j$ satisfying $|\nabla \tilde{u}_j|\le 4\zeta$.
\end{proof}

\subsection*{Construction of the extension.}
\label{extension.section}
We now prove the fact claimed in the proof of Lemma~\ref{DeLell.appl}
that the reflected surface $Y'_j$ 
can be extended to a graph over the entire plane $\{z=0\}$ in such a way that the increase of energy is controlled.
This is straightroward using mollification. The point is that each of our surfaces is graphical with bounded tilt, 
so the total curvature is equivalent to the $L^2$-norm of the Hessian of its graph function. In particular, 
if $Y={\rm Graph}(u)$ for $u\in \calC^2(D')$, $D'=\{1/4\le x^2+y^2\le 9\}$, with $|\nabla u|\le 2\zeta$, then 
\begin{equation}
\label{two.forms}
\frac{1}{(1+4\zeta^2)}\int_{D'}|\del^2 u|^2dxdy\le \int_{Y'}|\Abar|^2d\overline{\mu}\le (1+4\zeta^2)\int_{D'}|\del^2 u|^2dxdy.
\end{equation}

\begin{lemma}\label{extension}
Let $u$ be a $W^{2,2}$ function defined on the half-disc $D_+(3):=\{\sqrt{x^2+y^2}\le 3, x>0 \}$.  If $Y={\rm Graph}(u)$ 
then write 
\[
\int_{Y} |\Abring|^2d\bar{\mu}:= \calE, \quad 
\int_{Y\cap 1/2\le \sqrt{x^2+y^2}\le 3} |\Abar|^2d\bar{\mu}:= \calE'
\]
and assume that $|\nabla u|\le 1$, and in addition
\begin{enumerate}  
\item There exist $\epsilon, \delta >0$ such that $|\nabla u(P)|\le \delta$ for all $P\in D'\bigcap \{x\ge\epsilon\}$;
\item  For any $P\in \overline{D'}\cap \{x=0\}$, and any sequence $P_j\in D'$ with $P_j\rightarrow P$ we have 
$\lim_{j\to \infty}\partial_x u(P_j)=0$.
\end{enumerate}
Let $U$ be the even extension of $u$ to $D=\{\sqrt{x^2+y^2}\le 3\}$.  Then there exists a function $\overline{u}$ 
such that $\overline{u}=U$ on $\{\sqrt{x^2+y^2}\le 1\}$, $\overline{u}=0$ on $\{\sqrt{x^2+y^2}\ge 5\}$ 
and if we let $\overline{Y}:={\rm Graph}(\overline{u})$ then 
$\int_{\overline{Y}}|\Abring_{\overline{Y}}|^2 \le 2 \calE +1000(\delta+\epsilon)+10\calE'$. 
\end{lemma}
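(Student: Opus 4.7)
The plan is to construct $\overline{u}$ as the product of the even extension $U$ with a smooth radial cutoff, and to estimate the resulting Hessian via the Leibniz rule: on the piece where the cutoff is identically $1$ the estimate reduces to the definition of $\calE$, and on the annular transition region the potentially dangerous Leibniz terms are made small using condition~1 on most of the annulus and using the small area of the strip $\{|x|\le\epsilon\}$ on what remains.

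Fix a smooth non-increasing cutoff $\chi:[0,\infty)\to[0,1]$ with $\chi(r)=1$ for $r\le 1$ and $\chi(r)=0$ for $r\ge 2$, with $|\chi'|,|\chi''|\le C_0$ for a universal constant, and set $\overline{u}(x,y):=\chi(\sqrt{x^2+y^2})\,U(x,y)$ on $D$ and $\overline{u}\equiv 0$ off $D$. This agrees with $U$ on $\{r\le 1\}$ and vanishes on $\{r\ge 2\}$ (hence on $\{r\ge 5\}$). Condition~2 ensures $\partial_x u\to 0$ on $\{x=0\}$, so the even reflection of $u$ across $\{x=0\}$ introduces no distributional first-derivative jump and $U$, hence $\overline{u}$, lies in $W^{2,2}$. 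Since $|\nabla\overline{u}|$ is bounded (using $|\nabla u|\le 1$ together with the pointwise bound on $|U|$ discussed below), the graph formula for surfaces of bounded tilt, together with $|\Abring|^2\le|\Abar|^2$, yields
\[
\int_{\overline{Y}}|\Abring_{\overline{Y}}|^2\,d\bar\mu\le C\int|\partial^2\overline{u}|^2\,dx\,dy.
\]

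The last integral is then split over $\{r\le 1\}$, the annulus $A=\{1\le r\le 2\}$, and $\{r\ge 2\}$. On $\{r\le 1\}$, $\overline{u}=U$ and doubling contributes exactly $2\calE$; on $\{r\ge 2\}$ the integrand vanishes. On $A$, expanding
\[
\partial^2\overline{u}=\chi\,\partial^2 U+2\,\nabla\chi\otimes\nabla U+U\,\partial^2\chi
\]
and bounding $|\partial^2\overline{u}|^2$ by the three corresponding terms, the $\chi\,\partial^2 U$ piece is controlled by $10\calE'$ via \eqref{two.forms} applied on the doubled annulus. For the two remaining terms, further split $A$ into the strip $A\cap\{|x|\le\epsilon\}$, where the area is $O(\epsilon)$ and the crude pointwise bounds $|\nabla U|\le 1$, $|U|\le 3$ give an $O(\epsilon)$ contribution, and its complement $A\cap\{|x|\ge\epsilon\}$, where condition~1 delivers $|\nabla U|\le\delta$ and one wants $|U|=O(\delta+\epsilon)$ as well.

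The main obstacle is securing this last pointwise smallness of $|U|$: the hypotheses control only $|\nabla u|$, not $|u|$, so an anchor is needed. In the application in Lemma~\ref{DeLell.appl} one has $u_j(0,0)=0$, so I would assume $u(0,0)=0$ as a preliminary normalization (which leaves $\calE$, $\calE'$, and all gradient bounds invariant, and does not affect the construction since $\overline{u}$ only needs to agree with $U$ on $\{r\le 1\}$). Integrating $\nabla u$ from $(0,0)$ first along $\{0\le x\le\epsilon,\,y=0\}$ (where $|\nabla u|\le 1$ contributes at most $\epsilon$) and then into $\{x\ge\epsilon\}$ (where $|\nabla u|\le\delta$ contributes at most $3\delta$) gives $|u|\le\epsilon+3\delta$ on $\{x\ge\epsilon\}\cap D(3)$, with evenness inheriting the bound to $\{|x|\ge\epsilon\}$. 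This yields an $O(\delta^2+\epsilon^2)$ contribution from the $U^2|\partial^2\chi|^2$ term, and summing all contributions produces the claimed $2\calE+10\calE'+1000(\delta+\epsilon)$.
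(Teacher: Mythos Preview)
Your approach is correct and genuinely different from the paper's. The paper does not use a product cutoff $\chi(r)U$; instead it sets $\overline u(r,\theta)=(u^\sharp*\chi_{\psi(r)})(r,\theta)$, a convolution of (a truncation of) $U$ against a mollifier whose scale $\psi(r)$ grows from $0$ at $r=1$ to $2$ for $r\ge 3$. On the inner transition annulus this forces the paper to invoke the Hardy--Littlewood maximal inequality to control $\int|\partial^2\overline u|^2$ by $\calE'$, and on the outer region $\{2\le r\le 5\}$ it throws all derivatives onto the mollifier, reducing to a bound on $\int|\overline u|^2$. Your Leibniz-rule argument on a fixed annulus $\{1\le r\le 2\}$ is more elementary: the $\chi\,\partial^2 U$ term is bounded directly by $\calE'$ via \eqref{two.forms} with no maximal function needed, and the lower-order terms are handled by the strip/complement splitting.

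Two small points are worth flagging. First, on $\{r\le 1\}$ you should argue directly that $\int|\Abring_{\overline Y}|^2=\int|\Abring_{Y'}|^2\le 2\calE$ (since $\overline u=U$ there and $|\Abring|^2\,d\bar\mu$ is a local pointwise quantity), rather than routing through $\int|\partial^2\overline u|^2$: the latter only controls $\int|\Abar|^2$, which is not the same as $\calE$. Second, your observation that one needs the anchor $u(0,0)=0$ to get $|U|=O(\epsilon+\delta)$ is well taken; the paper's proof also silently uses a pointwise bound on $|u|$ (in the step $\int|\overline u|^2\le 1000(\epsilon+\delta)$), so both arguments rely on this normalization, which indeed holds in the application. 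With that in hand, your construction gives the same conclusion with less machinery; the paper's variable-scale mollification buys a smoother transition but is not needed for the stated estimate.
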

By Remark~\ref{special.beta} and the fact that the $Y_j$ converge locally in $\calC^\infty$ to a vertical half-plane
away from $\{x=0\}$, this Lemma then implies the claim on extension from above. 
\begin{proof}  First note that if $u\in W^{2,2}$, then using the fact that $\partial_xu=0$ on $\{x=0\}$, we have 
$U \in W^{2,2}$. Furthermore, using the formula for the second fundamental form of a graph $z = U(x,y)$, 
we have 
\[
\int_{Y'}|\Abring|^2 d\overline{\mu}= 2\calE.
\]

To construct the extension, fix a smooth cutoff function $\chi(x,y)\in \calC^\infty_0(B_1(0))$ with 
$\int_{\mathbb{R}^2} \chi dxdy=1$, and such that $|\del \chi|\le 10, |\del^2\chi|\le 100$. Given any $\rho>0$ we let 
$\chi_\rho:=\rho^{-2}\chi(\frac{x}{\rho},\frac{y}{\rho})$. We work in polar coordinates $r:=\sqrt{x^2+y^2}$, 
$\theta:=\arctan (y/x)$. 

Define a function $u^\sharp(r,\theta)$ which equals $u(r,\theta)$ for $r\le 5/2$, and which vanishes for $r>5/2$. 
In addition, let $\psi(r)$ be a $\calC^\infty$ function which vanishes when $r \leq 1$, equals $2$ for $r \geq 3$,
is strictly monotone increasing in the interval $[1,3]$ and satisfies $|\psi'(r)|\le 10, |\psi''(r)|\le 100$.
Then define the function
\[
\overline{u}(r,\theta):=(u^\sharp*\chi_{\psi(r)})(r,\theta),
\]
where $\chi_0$ is understood as the $\delta$ function. It is straightforward that $\overline{u}$ is 
$\calC^2$ away from $\{x=0\}$, and it is also obvious that $\overline{u}(r,\theta)=0$ for $r\ge 5$ and that 
 $|\nabla\overline{u}|\le 1$ throughout $\RR^2$. What remains is to show that the surface $Y={\rm Graph}(\overline{u})$
satisfies the claims of our Lemma. 

\medskip
  
To do this, we recall some facts about the Hardy-Littlewood 
maximal functions. For $\chi \in \calC^\infty_0$ and $f\in L_{loc}^1(\mathbb{R}^2)$, define
\[
M(f)(x):=\sup_{\rho>0}|(f*\chi_\rho)(x)|.
\]
Then (for an appropriate choice of cutoff function $\chi$), 
\begin{equation} \label{hardybound}
||M(f)||_{L^2}\le 10||f||_{L^2}.
\end{equation}

Using (\ref{hardybound}) and (\ref{two.forms}) we derive:
\[
\int_{Y\bigcap \{1\le r\le 2\}}|\Abar|^2d\overline{\mu}\le 20\calE'.
\]
This implies immediately that
\[
\int_{Y\bigcap \{ r\le 2\}}|\Abring|^2d\overline{\mu}\le 2{\cal E}+20\calE'.
\]
Thus matters are reduced to estimating $\int_{Y_j\bigcap \{ r\ge 2\}}|\Abring|^2d\overline{\mu}$.
Given (\ref{two.forms}), it suffices to control:

\[
\int_{\{r\ge 2\}} |\del^2 \overline{u}|^2dxdy.
\]
We use the formula $\del^2 \overline{u}(P)=[\del^2(\chi_{\psi(r(P))})*u^\sharp](P)$. 
Using the pointwise bounds on $\partial^j\chi_r$, $\partial^j\psi$, $j=0,1,2$ and on $|u|$, we 
directly derive that: 
\[
\int_{Y\bigcap \{2\le r\le 5\}}|\Abar|^2d\overline{\mu}\le 1000\int_{\RR^2\bigcap \{2\le r\le 5\}}|\overline{u}|^2dxdy\le 1000(\e+\delta)
\]
Finally, using the definition of $u^\sharp$ we deduce that $\int_{Y\bigcap \{5\le r\}}|\Abar|^2d\overline{\mu}=0$. 
\end{proof}

\section{The key estimates: Small weighted energy in a half-ball controls $\calC^1$ regularity}
\label{energy.controls}

We now prove Propositions \ref{theclaim} and \ref{ereg.prp2}.
In both cases, consider the sequence of (incomplete, graphical) Willmore surfaces 
furnished by Lemma \ref{DeLell.appl}, but dilate these further so that they are all graphical over $D_+(3)$.
We denote by $\hat{Y}_j$ the image of the original surfaces under the M\"obius transformation in the previous subsection. 

Recall also the connection between the bounds on the conformal factor $\phi_j$ and on $|dq_j|_{\olg}$ and $|dw_j|_{\olg}$,
as explained in Remark 4.3.  
The bounds $||\phi_j||_{\calC^0} \leq C_1$ and $\calE(Y_j)\le \e'(\zeta) \ll 1$ imply that 
the image of the entire rectangle $0\le q_j\le 1, |w_j|\le 1$ lies in $Y_j$. Furthermore, since $Y^\flat_j$ converges to a
vertical plane $Y_*$ (see  Remark \ref{exists.extension}), and since $q_j$ has gradient bounded above and below and vanishes 
along $\{x=0\}$, and $\Delta_{\olg_j} q_j = 0$, there is a subsequence of the $q_j$ converging to a harmonic function
of linear growth which vanishes at $x=0$. The only possible limit is $\lambda x$ for some $\lambda  \in [C^{-1}, C]$.
As before, the convergence is $\calC^\infty$ away from $\{x=0\}$.  

Next, using the $\calC^0$ and $W^{1,2}$ bounds for $\phi_j$ in \eqref{prethe.bounds}, 
we may replace coordinate derivatives by covariant derivatives in these bounds, at worst
only increasing the constant. 

\medskip

We can now prove the main analytic estimate, which shows that the energy of $Y_j$ 
controls the jump in the first derivative of the boundary curve of $Y_j$ at the origin.   
  
In the following, we often write $\del_1$ and $\del_2$ for the coordinate vector fields $\del_{q_j}$, $\del_{w_j}$ on $Y_j$,
and also set $\Abar_j(\del_1, \del_2)  = (\Abar_j)_{12}$, or simply $\Abar_{12}$. 
Since the coordinate function $z$ equals $u_j$ on $Y_j$, we have
\begin{equation}
 \label{nabla.ext.curv}
(\bar{A}_j)_{12} \nabla_\nu z =\nabla_{12} u_j =\del_{12} u_j-\del_1u_j\, \del_2\phi_j-\del_1\phi_j\, \del_2u_j.
\end{equation}
The two equalities are just specializations of basic definitions to this situation. Now drop the subscript $j$ for simplicity.   
Multiply the equation by $e^{-\phi}$. Noting that $e^{-\phi} (\del_{12} u - \del_1 \phi \del_2 u) = \del_1( e^{-\phi} \del_2 u)$, 
then integrating along the line segment $0 \leq q \leq 1$, $w = 0$ gives 
\begin{equation}
\label{after.integration}
(e^{-\phi}\del_2 u )(1,0)- (e^{-\phi}\del_2 u)(0,0) =  \int_{(0,0)}^{(1,0)}  e^{-\phi} \bar{A}_{12} \nabla_\nu u \, dq +
\int_{(0,0)}^{(1,0)}e^{-\phi}\del_2\phi\del_1u\, dq.
\end{equation}
  
We now state our main estimate. 
\begin{proposition}
\label{two.bounds}
Set $\calE_j:=\calE(Y_j)$ and ${\cal E}_{j,p}={\calE}_p(Y_j)$. Then there exist constants $C,C'$ such that
\begin{equation}
\label{big}
\int_{(0,0)}^{(1,0)} \left|(\bar{A}_j)_{12}e^{-\phi_j}\right|\, dq_j  +\int_{(0,0)}^{(1,0)}e^{-\phi_j}\partial_2\phi_j\partial_1z\, dq_j
\le C\sqrt{\calE_{j,p}}  + C'\sqrt{\calE_j} +o(1). 
\end{equation}
\end{proposition}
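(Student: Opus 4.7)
The plan is to bound the two line integrals in \eqref{big} separately: the first by $C\sqrt{\calE_{j,p}}$, and the second by $C'\sqrt{\calE_j}+o(1)$. Both rely on the uniform $\calC^0$, $W^{1,2}$ and $W^{2,1}$ bounds on $\phi_j$ from Lemma~\ref{DeLell.appl}, together with the structure of the Willmore equation.

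For the first integral, the strategy is to produce a pointwise estimate on $|(\bar A_j)_{12}|\,e^{-\phi_j}$ along the segment $\{w_j=0,\,0\le q_j\le 1\}$ that is integrable in $q_j$. The starting point is the classical fact that a surface in $\HH^3$ is Willmore if and only if its hyperbolic Gauss map $N_j:Y_j\to dS^{3,1}$ into four-dimensional deSitter space is harmonic, and that the Dirichlet energy density of this map equals $|\Aring_j|^2$. Applying the Bochner identity for $N_j$ together with the strong subharmonicity of the intrinsic distance function $f$ to the poles (a consequence of the ambient negative curvature and the harmonic map equation) yields a weighted subharmonicity estimate that leads to the mean value bound
\[
|\Aring_j|^2(P)\,f(P)^{2p}\le \frac{C}{r^2}\int_{B_r(P)\subset Y_j}|\Aring_j|^2\,f^{2p}\,d\mu_j \le \frac{C}{r^2}\,\calE_{j,p}
\]
for $P\in Y_j$ and $r$ less than the injectivity radius. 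Taking $r$ fixed and small, and converting $|\Aring_j|_g$ into the Euclidean trace-free form via $|\Abring_j|_{\bar g}=x^{-1}|\Aring_j|_g$, one obtains the pointwise bound
\[
|(\bar A_j)_{12}|\,e^{-\phi_j}\le \frac{C\sqrt{\calE_{j,p}}}{x\,f(P)^p}.
\]
Since $f(P)\sim|\log x|$ as $x\to 0$ and $q_j\sim\lambda x$ in our isothermal coordinates, the integrand is majorised near $q=0$ by $C\sqrt{\calE_{j,p}}\,(q\,|\log q|^p)^{-1}$, which is integrable on $[0,1]$ precisely because $p>1$; in the interior, $f$ is bounded away from zero and the contribution is trivially controlled. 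This is exactly where the use of the \emph{weighted} energy (rather than the unweighted one) is essential.

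For the second integral $I_j:=\int_0^1 e^{-\phi_j}\partial_2\phi_j\,\partial_1 z\,dq_j$, the strategy is an integration-by-parts/flux argument, and this is the step I expect to be most delicate. Fix a small $W>0$ and apply the fundamental theorem of calculus in the $w_j$-direction to rewrite $I_j$ as the sum of the value of $e^{-\phi_j}\partial_2\phi_j\partial_1 z$ integrated along $\{w_j=W\}$ and a bulk integral over $R_W=[0,1]\times[0,W]$ of $\partial_{w_j}(e^{-\phi_j}\partial_2\phi_j\partial_1 z)$, which expands into terms involving $|\nabla\phi_j|^2\,\partial_1 z$, $\partial_2^2\phi_j\,\partial_1 z$ and $\partial_2\phi_j\,\partial_1\partial_2 z$ weighted by $e^{-\phi_j}$. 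The boundary contribution at $w_j=W$ is $o(1)$ as $j\to\infty$, since $Y_j\to Y_*$ smoothly away from $\{x=0\}$ forces $\phi_j\to 0$ in $\calC^\infty$ on $\{w_j=W\}$. For the bulk, the term $\partial_2^2\phi_j$ is controlled through the prescribed-curvature equation $\Delta\phi_j=1-K_j e^{2\phi_j}$ (or equivalently a Rivi\`ere-type Willmore conservation law), producing remainders bounded pointwise by $|\Abar_j|$ up to manageable terms; Cauchy--Schwarz combined with the $W^{1,2}$ and $W^{2,1}$ bounds on $\phi_j$ from \eqref{the.bounds}, the uniform bound on $\partial_1 z$ (from the graphicality hypothesis $|\nabla u_j|\le 2\zeta$), and $\int_{Y_j}|\Abar_j|^2\,d\bar\mu\le C\calE_j$, then yield $|I_j|\le C'\sqrt{\calE_j}+o(1)$. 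The main obstacle is packaging this flux argument carefully: one must verify that the auxiliary segment at $w_j=W$ can be chosen so that its interior contribution genuinely vanishes in the limit, and that the second derivatives of $\phi_j$ that appear upon expansion are indeed absorbed either by the Gauss equation or by a Willmore conservation law, leaving only quantities controlled by $\calE_j$. Once this is achieved, adding the two estimates produces \eqref{big}.
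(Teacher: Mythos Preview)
Your treatment of the first integral is essentially the paper's approach (harmonic map to deSitter, Bochner, mean value inequality, integrability from $p>1$), though you gloss over a genuine subtlety: the assertion ``$f(P)\sim|\log x|$ as $x\to 0$'' is only valid when the nearest pole lies \emph{outside} the local region under consideration. After the M\"obius transformations $\varphi_j$, some poles may lie inside $\calB_j$, and there $f(P)\approx d_g(P,O_k)+5$, which can be bounded. The paper handles this by introducing the modified weights $\tilde f_j,\ol f_j$ and checking separately that $\int_0^1 (q_j(|\log q_j-\log\delta_{j,i}|+4)^p)^{-1}\,dq_j\le\frac{2}{p-1}$ for each interior pole at height $\delta_{j,i}$; without this, your integrability claim does not follow.

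The real gap is in the second integral. You propose applying the fundamental theorem in the $w_j$-direction and claim the boundary contribution along $\{w_j=W\}$ is $o(1)$ because ``$Y_j\to Y_*$ smoothly away from $\{x=0\}$ forces $\phi_j\to 0$ in $\calC^\infty$ on $\{w_j=W\}$.'' But $q_j$, not $w_j$, is the coordinate vanishing on $\gamma_j$; the segment $\{0\le q_j\le 1,\ w_j=W\}$ runs from the boundary point $(0,W)$ (where $x=0$) into the interior, so interior smooth convergence says nothing near its left endpoint. Your flux argument therefore trades the line integral at $w_j=0$ for an identical one at $w_j=W$, making no progress. The paper's device is to use a cutoff $\chi(w)$ vanishing at $w=\pm 1$ and apply the divergence theorem over the rectangles $\calD_1,\calD_2$ so that the only boundary terms are at $q_j=0$ and $q_j=1$: the $q_j=1$ term is genuinely interior and $o(1)$, while the $q_j=0$ term $\int_{-1}^1|\partial_1\phi_j|\,dw_j$ is controlled via the $W^{2,1}$ bound on $\phi_j$ by integrating $|\partial_{12}\phi_j|$ back from $q_j=1$. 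A further point you do not address: the Gauss equation produces a term $-\bar H_j^2$ whose sign must be exploited, and the paper arranges this by replacing $\partial_1 u_j$ with $\partial_1 u_j+1>0$ on $\calD_1$ and compensating with a second divergence over $\calD_2$.
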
  

Before proving this, let us explain how it leads to a contradiction, thus establishing Propositions \ref{theclaim} and \ref{ereg.prp2}. 
In view of the $\calC^\infty$ convergence of $u_j \to u_*$ and $(q_j,w_j) \to (q_*,w_*)$ (both for $x > 0$), 
\begin{equation}
\label{summand1}
|e^{-\phi_j}\partial_{w_j} u_j(1,0)-e^{-\phi_*}\partial_{w_*} u_*(1,0)|\le \alpha/10 
\end{equation}
for $j$ sufficiently large. Furthermore, 
\begin{equation}
\label{int1}
e^{-\phi_j}\partial_{w_j} u_j(1,0)-e^{-\phi_j}\partial_{w_j} u_j(0,0) = \int_0^1 \partial_{1}(e^{-\phi_j}\partial_2 u_j)\, dq_j,
\end{equation}
and since $Y_*$ lies in the plane $\{z=0\}$, we also have
\begin{equation}
\label{int2}
e^{-\phi_*}\partial_{w_*} u_*(1,0)-e^{-\phi_*}\partial_{w_*} u_*(0,0)=0.
\end{equation} 
Proposition~\ref{two.bounds} then yields that for large enough $j$
\begin{equation} \label{control}
\int_{(0,0)}^{(1,0)} \del_{q_j}(e^{-\phi_j}\partial_{w_j} u_j)\, dq_j\le \alpha/10.
\end{equation}
Combining these facts along with $|e^{-\phi_j} \partial_{w_j} u_j-\partial_y u_j|\le \frac{1}{10}|\partial_y u_j|$,
which holds since  $|\nabla u_j|<2\zeta\le 1/10$ and $|\del_{w_j} - \del_y|_{\olg}$ is also small, 
we conclude that 
\begin{equation}
\label{contradiction}
\left|\partial_yu_j(0,0)-\partial_yu_*(0,0)\right|\le \alpha/3
\end{equation}
when $j$ is large, which contradicts \eqref{new.jump}.

\subsection{Regularity from the interior: the two line integrals}
\label{integral.2ff}
Proposition~\ref{two.bounds} is a consequence of the following two results: 
\begin{proposition}
\label{decay} 
With all notation as above, suppose that $||\phi_j||_{\calC^0(Y_j)}\le K$ for all $j$. 
Then there exists a constant $C(K)>0$ such that for each point $P\in Y_j$ 
with $q_j(P)\in[0,1]$ and $w_j(P)=0$ we have 
\begin{equation}
\label{eqndecay}
|(\Abring_j)_{12}|(P)\le C(K)\frac{\sqrt{\int_{B^{2}(P)}|\Abring_j|^2f_j^{2p}d\bar\mu}}{U_j(q_j)},
\end{equation}
where $B^2(P)$ is the (intrinsic) ball of radius 2 centered at $P$ and the functions $U_j(q_j)$ satisfy 
 $\int_0^1\frac{dq_j}{U_j(q_j)}\le M'<\infty$
for some uniform constant $M'$.
\end{proposition}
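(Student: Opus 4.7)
The strategy rests on the classical realization of Willmore surfaces as conformal harmonic maps into the de Sitter space $S^{3,1}$ (the conformal Gauss map). The energy density of this map, expressed intrinsically on $Y_j$, is proportional to $|\Aring_j|^2$, and its harmonicity is equivalent to the Willmore equation. The plan is first to derive, via a Bochner computation for this map together with the spacelike character of its image, a differential inequality of the form
\[
\Delta_{g_j} |\Aring_j|^2 \ge - C\, |\Aring_j|^4
\]
on $Y_j$. Combined with the uniform $\calC^0$-bound $\|\phi_j\|_{\calC^0} \le K$, which makes the intrinsic and Euclidean geometries on balls of fixed scale uniformly comparable, a Moser iteration (or, equivalently, the Kuwert--Sch\"atzle / Rivi\`ere $\epsilon$-regularity applied on scales smaller than $2$ whenever the local energy is above a fixed threshold and quoted directly below it) yields the mean-value estimate
\[
|\Aring_j|^2(P) \le C(K)\,\int_{B^2(P)} |\Aring_j|^2 \, d\bar{\mu}.
\]
Since every component of $\Aring_j$ in the $(q_j,w_j)$-frame is dominated by $|\Aring_j|$ up to a factor $e^{2\phi_j} \le e^{2K}$, the same bound, with a possibly larger constant $C(K)$, controls $|(\Aring_j)_{12}|(P)$.

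I would then extract the weight $f_j^{2p}$ from the integrand by simple monotonicity,
\[
\int_{B^2(P)} |\Aring_j|^2 \, d\bar\mu \;\le\; \frac{1}{\inf_{B^2(P)} f_j^{2p}} \int_{B^2(P)} |\Aring_j|^2\, f_j^{2p}\, d\bar\mu,
\]
and set $U_j(q_j) := \inf_{B^2(P)} f_j^p$, where $P$ is the point on $\{w_j=0\}$ with isothermal coordinate $q_j$. Since $f_j$ is the intrinsic hyperbolic distance (plus $5$) to the fixed pole set, it is $1$-Lipschitz on $Y_j$, and $\inf_{B^2(P)} f_j \ge f_j(P) - 2$. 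To verify the integrability $\int_0^1 dq_j/U_j(q_j) \le M'$, recall that $q_j \to \lambda x$ in $\calC^\infty$ away from $\{x=0\}$, so a point $P$ with small $q_j(P)$ has Euclidean height $x(P) \sim q_j/\lambda$; the hyperbolic intrinsic distance from $P$ to any interior pole therefore grows like $|\log x(P)| \sim |\log q_j|$. Hence $U_j(q_j) \gtrsim |\log q_j|^p$ for small $q_j$ and is bounded below by a uniform positive constant once $q_j$ is bounded away from $0$; the change of variables $q_j = e^{-t}$ shows that $\int_0^1 dq_j/U_j(q_j)$ is finite uniformly in $j$.

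The delicate point is the mean-value inequality in the first step. The Lorentzian signature of $S^{3,1}$ means that the curvature term in the Bochner formula is not obviously well-signed, and the quadratic nonlinearity $|\Aring|^4$ must be absorbed to close the Moser iteration. The expectation is that the spacelike character of the image of the conformal Gauss map provides the correct sign when the Riemannian tensor of $S^{3,1}$ is evaluated along it, and that the nonlinearity is controlled either by $\epsilon$-regularity whenever the local energy drops below a fixed threshold, or by a direct comparison with a supersolution built from the strong subharmonicity of the hyperbolic intrinsic distance function on $Y_j$. It is exactly at this juncture that the \emph{weighted} energy enters essentially: the weight $f^{2p}$ compensates for the intrinsic distance blowing up near $\{x=0\}$, and is precisely what keeps $\int_0^1 dq_j/U_j(q_j)$ finite along the (intrinsically unbounded) rays in $Y_j$ that meet the asymptotic boundary.
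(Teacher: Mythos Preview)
Your overall strategy---conformal Gauss map into de Sitter, Bochner formula, mean value inequality, then extracting the weight---is exactly the paper's, and your identification of this as the place where the \emph{weighted} energy is essential is correct. But there is a genuine gap in the scaling.

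The mean value inequality you can actually obtain (via Moser or via the linear inequality $\Delta_\HH F\ge -2F$ once the a~priori bound $q_j^2|d\Phi|^2_\EE\le 1$ from interior $\epsilon$-regularity is used) controls the \emph{hyperbolic} norm $|\Aring_j|_{g_j}(P)$ by $C(K)\big(\int_{B^2(P)}|\Abring_j|^2\,d\bar\mu\big)^{1/2}$. The quantity in the proposition, however, is $|(\Abring_j)_{12}|$, the \emph{Euclidean} component in the $(q_j,w_j)$ frame, and one has $|\Abring_j|_\EE = e^{-\phi_j}\,q_j^{-1}\,|\Aring_j|_{g_j}$. Passing from one to the other therefore costs a factor $1/q_j(P)$ that you have dropped. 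Your display $|\Aring_j|^2(P)\le C(K)\int_{B^2(P)}|\Aring_j|^2\,d\bar\mu$ mixes hyperbolic norm with Euclidean measure; once the notation is made consistent the $1/q_j$ appears, and the correct denominator is $U_j(q_j)\sim q_j(P)\,f_j^{p}(P)$, not $\inf_{B^2(P)}f_j^{p}$ alone.

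This is not cosmetic: with your $U_j\sim|\log q_j|^p$ the integral $\int_0^1 dq_j/U_j$ converges for every $p>0$, so the unweighted energy would already suffice---contradicting precisely the point you flag in your last paragraph. With the correct $U_j(q_j)\sim q_j\,|\log q_j|^p$ one gets $\int_0^1 dq_j/(q_j|\log q_j|^p)<\infty$ \emph{only} for $p>1$, and moreover one must treat separately the poles $O_k$ that lie inside the box (where $f_j(P)\sim |\log q_j-\log\delta_k|+5$) and those that lie outside (where a short argument shows $d_{g_j}(P,O_k)\ge|\log x(P)|$). The paper handles the weight by multiplying $|d\Phi|^2_\EE$ by $\bar f^{2p}$ \emph{before} the mean value step, using the differential inequality $(\Delta_\HH\bar f)\bar f\ge 3|\nabla\bar f|^2_\HH$ for the modified weight; your idea of pulling out $\inf_{B^2(P)}f_j^{p}$ \emph{after} the mean value step is simpler and also works, but only once the missing $q_j(P)$ is restored and the pole dichotomy is addressed.
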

  
\begin{proposition} \label{bound.phi.conseq}
For some constants $C,C'$ independent of $j$, we have
\begin{equation} \label{second.bound}
\int_{(0,0)}^{(1,0)} e^{-\phi_j}\partial_2\phi_j\partial_1u_j \, dq_j \le C  \int_{Y_j} |\Abring_j|^2\, d\bar\mu+ 
C'\sqrt{\int_{Y_j} |\Abring_j|^2\, d\bar\mu}+o(1);
\end{equation}
 \end{proposition}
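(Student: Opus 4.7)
The plan is to bound the line integral directly, using two pieces of information from Lemma \ref{DeLell.appl}: the pointwise bound $\|\phi_j\|_{\calC^0} \le C$ and the estimate $\|\phi_j\|_{W^{2,1}} \le C\int_{Y_j}|\Abring_j|^2\, d\bar\mu + o(1)$. The argument reduces to establishing (i) a uniform pointwise bound on $\partial_{q_j}u_j$ along the line segment $\{0 \le q_j \le 1,\, w_j = 0\}$, and (ii) an $L^1$ bound on the trace of $\partial_{w_j}\phi_j$ on this segment.

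For (i), the isothermal identity $\olg(\partial_{q_j}, \partial_{q_j}) = e^{2\phi_j}$ gives $(x_{q_j})^2 + (y_{q_j})^2 + (z_{q_j})^2 = e^{2\phi_j}$, and since $u_j = z$ on the graph $Y_j$ we conclude $|\partial_{q_j}u_j| = |z_{q_j}| \le e^{\phi_j} \le C$. For (ii), the $W^{2,1}$ estimate on $\phi_j$ implies $\partial_{w_j}\phi_j \in W^{1,1}$, and the limiting Sobolev trace $W^{1,1}(\Omega) \hookrightarrow L^1(S)$ on a smooth codimension-$1$ submanifold $S \subset \Omega$ yields
\[
\int_0^1 |\partial_{w_j}\phi_j|(q,0)\, dq \;\le\; C\|\phi_j\|_{W^{2,1}} \;\le\; C'\int_{Y_j}|\Abring_j|^2\, d\bar\mu + o(1).
\]
Combining these,
\[
\int_0^1 e^{-\phi_j}\partial_{w_j}\phi_j\,\partial_{q_j}u_j\big|_{w_j=0}\, dq \;\le\; e^{2\|\phi_j\|_{\calC^0}}\int_0^1 |\partial_{w_j}\phi_j|(q,0)\, dq \;\le\; C''\int_{Y_j}|\Abring_j|^2\, d\bar\mu + o(1),
\]
which is stronger than the stated bound, since $\calE(Y_j)$ is small.

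This admits the flux-formula interpretation referenced in the overview: writing $e^{-\phi_j}\partial_{w_j}\phi_j = -\partial_{w_j}(e^{-\phi_j})$ and applying Stokes' theorem on a rectangle $[0,1] \times [0,W]$ in the isothermal chart, the line integral can be expressed as an area integral plus boundary contributions on $\{w = W\}$. The latter vanish in the limit via the $\calC^\infty$ convergence of $Y_j$ to the vertical plane away from $\{x=0\}$, while the area integral produces a linear-in-$\calE$ term from the $W^{2,1}$ bound on $\phi_j$, and a $\sqrt{\calE}$ term from a Cauchy--Schwarz pairing of $\|\phi_j\|_{W^{1,2}}$ with the $W^{2,2}$ norm of $u_j$, which is comparable to $\sqrt{\calE}$ via \eqref{two.forms}. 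The main technical hurdle in either approach is the limiting Sobolev trace $W^{1,1} \hookrightarrow L^1$ on a codimension-$1$ curve with sharp constants; this is classical but rests squarely on the $W^{2,1}$ regularity of $\phi_j$ established in Section 4.
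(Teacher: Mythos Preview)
Your argument is correct and is more direct than the paper's. The paper proves \eqref{second.bound} by a genuine Stokes'-theorem computation: it introduces a cutoff $\chi(w)$, expresses the line integral as the flux of a vector field across $\{w_j=0\}$, and then bounds the resulting bulk integrals over the rectangles $\calD_1=[0,1]\times[0,1]$ and $\calD_2=[0,1]\times[-1,0]$ using the Liouville equation $-\Delta_{\olg}\phi_j=\tfrac{1}{4}(4\overline{H}^2-|\Abar|^2)$, the $W^{1,2}$ bound on $\phi_j$, and Cauchy--Schwarz; the remaining boundary terms at $q_j=0,1$ are handled by integrating $|\partial_{12}\phi_j|$ in $q_j$ (essentially a one-dimensional trace argument) together with the smooth interior convergence. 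The $\sqrt{\calE_j}$ term in the stated bound arises from these Cauchy--Schwarz pairings.

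Your route bypasses all of this by combining the pointwise bound $|\partial_1 u_j|\le e^{\phi_j}$ with the codimension-one trace $W^{1,1}\hookrightarrow L^1$ applied to $\partial_{w_j}\phi_j$, yielding the cleaner linear estimate $C\calE_j+o(1)$. This is legitimate: the rectangle $\{0\le q_j\le 1,\ |w_j|\le 1\}$ lies in the domain where \eqref{the.bounds} holds, and since $\|\partial_{w_j}\phi_j\|_{L^1}\le C\|\phi_j\|_{W^{1,2}}$ on the unit square, the full $W^{1,1}$ norm of $\partial_{w_j}\phi_j$ is indeed controlled by $C\calE_j+o(1)$. The paper's more elaborate flux formulation pays off later, however: in \S 7 the analogous estimate must be run over a \emph{sector} $|w_j/q_j|\le 1$ rather than a rectangle, where the cutoff depends on $w_j/q_j$ and a straightforward trace argument no longer suffices; there the divergence structure and the Hardy inequality become essential. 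For the present proposition your approach is simpler and gives a sharper bound.
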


These are proved in the remaining subsections of \S 5. 

\begin{remark}
\label{bd.comparison}
In our setting, the $\epsilon$-regularity result \cite[Theorem I.5]{riv1} applied to intrinsic discs of radius $1$ 
in $Y_j$ (with respect to $g_j$) yields that $|\Aring_j|\le C\sqrt{{\cal E}_j}$, which implies that $x|\Abring_j|  \le C\sqrt{{\cal E}_j}$. 
Using that $0 < C_1 \leq q_j/x < C_2$, which we have already noted follows from the upper and lower bounds on $||\phi_h||_{{\cal C}^0}$, 
we see that $q_j|\Abring_j|  \le C'\sqrt{{\cal E}_j}$. Therefore, Proposition~\ref{decay} actually shows that assuming
bounded {\it weighted} energy yields a stronger pointwise decay estimate for $|\Abring_j|$.
\end{remark}
\subsection{Proof of the Proposition \ref{decay}:}
The argument relies on obtaining pointwise control on $|\Abring|$ at each point on the segment $\{0\le q_j\le 1, w_j=0\}$ 
using the weighted energy of $Y_j$ on a ball of (hyperbolic) radius 1 around that point. 

To this end, we use a well-known realization of Willmore surfaces as harmonic maps into the $(3+1)$-dimensional 
deSitter space $(dS_{1,3},h)$, which we regard as a hypersurface in the $(4+1)$-dimensional Minkowski space 
$\mathbb{R}^{1,4}$. This is useful since the norm of this map, $|d\Phi|^2$, is precisely equal to $|\Abring|^2$. 

\bigskip
 
\noindent {\bf Willmore surfaces as harmonic maps:} Consider the (incomplete) Willmore surfaces $Y_j\subset 
\RR^3_+\subset\mathbb{R}^3$, equipped with the isothermal coordinates $q_j\in [0,1], w_j\in [-1,1]$. For simplicity, 
denote $Y_j$ as $Y$ for the moment. Let $\ol{g}$, $\ol{\nabla}$ and $\ol{\Delta}$  be the induced Euclidean metric, 
connection and corresponding Laplacian. The Willmore surface in $\mathbb{R}^3$ determines a unique conformal harmonic map 
\[
\Phi:Y\to (dS_{1,3},h)\subset \mathbb{R}^{1,4}.
\]
Using coordinates $(t,x^1, x^2, x^3, x^4)$ so that $g_{\rm Mink}:=-dt^2+ \sum (dx^j)^2$, 
then $dS_{1,3} = \{-t^2+ \sum (x^j)^2=1\}$.  We recall first that
 \begin{equation}
\label{rel1}
\frac{1}{2}|\Abring|^2=(d\Phi)_\alpha^i(d\Phi)_\beta^j\ol{g}^{\alpha\beta}h_{ij} := |d\Phi|^2. 
\end{equation}

Since harmonic maps from $2$-dimensional domains are conformally invariant, we may as well
use the flat metric $g_\EE:=dq^2+dw^2$  on $Y$ rather than $e^{2\phi}(dq^2+dw^2)$. 
Observe that $|d\Phi|^2_{\EE} =e^{-2\phi}|d\Phi|^2$, so if $\phi$  is bounded above and below, then $|T|_\EE$ and $|T|$ are 
comparable; in particular, if $e^{|\phi|}\le \sqrt{2}$, then 
\begin{equation}
\label{rel1'}
\frac{1}{2}|\Abring|^2\le |d\Phi|^2_\EE\le 2|\Abring|^2;
\end{equation}
Now recall that
\begin{equation}
\label{rel2}
\Delta_\EE|d\Phi|_\EE^2=2|{\nabla} d\Phi|_\EE^2+(\mathrm{Riem}_{dS})_{ijkl}(d\Phi)_\alpha^i(d\Phi)_\beta^j
(d\Phi)_\gamma^k(d\Phi)^l_\delta (g_\EE)^{\alpha\gamma}(g_\EE)^{\beta\delta},
\end{equation}
which is the special case of the Bochner-type formula for any harmonic map \cite[Eqn. (8.7.13)]{Jost}). 
We recall the Riemann curvature tensor of deSitter space:
\[
(\mathrm{Riem}_{dS})_{ijkl}=(h_{ik}h_{jl}-h_{il}h_{jk}).
\]
Also, since $\Phi$ is conformal (and the metric induced by $h$ on $\Phi(Y)$ is Riemannian), then
$d\Phi(\del_q)$, $d\Phi(\del_w)$ are orthogonal and have the same length, hence
\[
\begin{split}
&h_{ik}h_{jl}(d\Phi)_\alpha^i(d\Phi)_\beta^j
(d\Phi)_\gamma^k(d\Phi)^l_\delta (g_\EE)^{\alpha\gamma}(g_\EE)^{\beta\delta}
\\&=2h_{il}h_{jk}(d\Phi)_\alpha^i(d\Phi)_\beta^j
(d\Phi)_\gamma^k(d\Phi)^l_\delta (g_\EE)^{\alpha\gamma}(g_\EE)^{\beta\delta}.
\end{split}
\]
From this, \eqref{rel2}, and the pointwise bounds on $\phi$ and $q/x$, we obtain that
\begin{equation}
\label{concln}
\Delta_\HH(e^{-2\phi}|\Abring|^2)  = (q_j)^2 \Delta_\EE |d\Phi|_\EE^2=2(q_j)^2|{\nabla} 
d\Phi|_\EE^2-(q_j)^2|d\Phi|_\EE^2 |d\Phi|_\EE^2, 
\end{equation}
where $\Delta_\HH$ is the Laplacian with respect to the hyperbolic metric $q_j^{-2}g_\EE$.  Hence, 
assuming pointwise bounds on $q_j^2|d\Phi|^2_\EE$ (which in fact hold for the surfaces $Y_j$, 
see Remark \ref{bd.comparison}), e.g.\ $q_j^2|d\Phi|^2_\EE\le 1$, we see finally that
\begin{equation}
\label{concln'}
\Delta_\HH(|d\Phi|^2_\EE) =(q_j)^2 \Delta_\EE |d\Phi|_\EE^2
 \ge 2(q_j)^2|\nabla d\Phi|^2_\EE-2|d\Phi|_\EE^2.
\end{equation}

\medskip

\noindent {\bf Modified weight function and the isothermal parametrization:}  We now
recall the weight function $f$.  After isolating the part of each $Y_j$ in the original Willmore
surface $\hat{Y}_j$ which admits good isothermal coordinates, some poles lie $Y_j$ and others lie 
in $\hat{Y}_j\setminus Y_j$. We shall modify the weight function $f$ to omit those poles which do 
not lie in $Y_j$, but to justify how we do this we require some preliminary estimates.
\begin{lemma}
\label{big-small-discs}
Consider $Y\subset\mathbb{H}^3$, and assume that the connected component of $Y\bigcap B_+(0,3)$ containing
the origin is a graph $z=u(x,y)$ over the half-disc $D_+(3)$, with $|\nabla u|\le 1$. Then for points $B\in Y \setminus
{\mathrm Graph}(u)$ and $A_x :=(x,0,u(x,0)) \in {\mathrm Graph}(u)$, $A_x$, we have 
\[
d_{g} (B,A_x)\ge |\log x|. 
\]
\end{lemma}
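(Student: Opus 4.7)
The plan combines a topological observation with a direct computation in the upper half-space model. First I would note that, by hypothesis, the component of $Y\cap B_+(0,3)$ containing the origin is exactly $\mathrm{Graph}(u)$; it contains $A_x$, while $B$ lies outside it. Any continuous path $\gamma$ on $Y$ from $A_x$ to $B$ must therefore cross the topological boundary of this component in $Y$. Because $Y\subset\HH^3=\{x>0\}$ cannot reach the flat disc $\{x=0\}\cap\overline{B_+(0,3)}$, the exit must occur at some point $Q\in Y$ on the Euclidean hemisphere $\{|P|_\euc=3\}$. Consequently $d_g(A_x,B)\ge d_g(A_x,Q)$, and since the intrinsic metric on $Y$ dominates the ambient hyperbolic metric, $d_g(A_x,Q)\ge d_{\HH^3}(A_x,Q)$.

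Next I would estimate the ambient hyperbolic distance. Using $u(0,0)=0$ (as the origin lies in the closure of the graph) together with $|\nabla u|\le 1$, we get $|u(x,0)|\le x$, so $|A_x|_\euc\le\sqrt{2}\,x$ and hence $|A_x-Q|_\euc\ge 3-\sqrt{2}\,x$. Plugging into the standard formula
\[
\cosh d_{\HH^3}(P,P')=1+\frac{|P-P'|_\euc^2}{2\,x(P)\,x(P')},
\]
with $x(A_x)=x$ and $x(Q)\le|Q|_\euc=3$, yields
\[
\cosh d_{\HH^3}(A_x,Q)\ge 1+\frac{(3-\sqrt{2}\,x)^2}{6x},
\]
and a short elementary computation shows that the right-hand side is at least $\cosh|\log x|=\tfrac12(x+x^{-1})$ for every $x\in(0,1]$. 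This gives the desired $d_g(A_x,B)\ge|\log x|$ in the regime where the lemma is nontrivial.

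The main (really the only) point of substance is the topological statement that $\gamma$ must cross the hemispherical portion of $\partial B_+(0,3)$; once that is in hand, the rest is routine manipulation of the half-space distance formula. As a sanity check, one can also estimate the length of $\gamma$ via $L_g(\gamma)\ge\int_\gamma|dx|/x$, which immediately yields the bound whenever $\gamma$ reaches a height comparable to $1$; this variant covers the easy case where $Q$ itself has height bounded below, while the ambient-distance estimate above is what one needs when $Q$ is allowed to have small height.
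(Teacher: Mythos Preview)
Your proof is correct and follows essentially the same approach as the paper: both rest on the observation that any intrinsic path from $A_x$ to $B$ must exit the graphical component through the Euclidean hemisphere $\{|P|_\euc=3\}$, and then lower-bound the ambient hyperbolic distance from $A_x$ to that exit point. You make this exit-point argument explicit, whereas the paper states it only implicitly (it writes the constraint $x_0^2+y_0^2\ge 4$ as if for $B$ itself, but this really holds for the exit point $Q$, using that $Q$ lies on the closure of the graph together with $|\nabla u|\le 1$). For the distance estimate the paper projects to the vertical plane $\{z=0\}$ and argues that the $\HH^2$-geodesic arc between the projected points must reach height $1$, while you plug directly into the $\cosh$ distance formula; these are equivalent computations, and yours has the minor advantage of not needing $Q$ to lie on the graph, only that $|Q|_\euc=3$.
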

\begin{proof} The proof is elementary: if $B = (x_0,y_0,z_0)$, then set $\tilde{B} = (x_0,y_0,0)$ and $\tilde{A}_x = (x,0,0)$. 
It suffices to check that $d_{\mathbb{H}^3}(\tilde{B},\tilde{A}_x)\ge |\log x|$. The geodesic $\gamma(\tilde{B},\tilde{A}_x)$ 
joining $\tilde{B},\tilde{A}_x$ is a circular arc. If $x_0\ge 1$, the claim is obvious since $d_{\mathbb{H}^3}(\tilde{B},\tilde{A}_x)\ge 
d_{\mathbb{H}^3}((x_0,0,0),\tilde{A}_x)\ge |\log x|$. If $x_0\le 1$, however, then since $(x_0)^2+(y_0)^2\ge 4$, this circular arc 
must intersect the line $\{x=1, z=0\}$, we can apply the previous argument. 
\end{proof}
As an immediate consequence, 
if $B\notin D_+(0,2)$, then $d_{\mathbb{H}^2}(B,A_x) \ge | \log x|$. 
   
Now consider the {\it hyperbolic} metric $\g_\HH:=q_j^{-2}( (dq_j)^2+(dw_j)^2)$ on $Y_j$. This is conformal to the 
metric $g_{j}$ induced by the embedding $Y_j \subset \HH^3$; indeed, 
\[
\frac{x^2e^{-2\phi_j}}{(q_j)^2}g_{j}=g_\HH.
\]
Quantities computed with respect to this metric will be labelled with a $\HH$. In particular,
with ${\cal B}_j = \{0\le q_j\le 1, -1\le w_j\le 1\} \subset Y_j$, then for any $P\in {\cal B}_j$, we write $B^R(P)$ 
and $B^R_\HH(P)$ for the balls around $P$ of radius $R$ with respect to $d_{g_j}$ and $d_\HH$. 

The bounds on $\sup |\phi_j|$ and $\sup |\nabla q_j|$ give upper and lower bounds on $q_j/x$, which imply that 
\begin{equation}
\label{distortion}
|d_\HH(P,Q)-d_{g_j}(P,Q)|\le 1 \qquad \forall\ P,Q\in Y_j.
 \end{equation} 

Lemma \ref{big-small-discs} implies that after rebalancing, then for each point $P\in \{0\le q_j\le 1, w_j=0\}$ 
and each pole $O_k\in \hat{Y}_j\setminus Y_j$, we have
\begin{equation}
\label{cheap.bound}
d_{g_j}(O_k,P)\ge |\log x(P)|,
\end{equation}
and hence also
\begin{equation}
\label{cheap.bound'}
d_{g_j}(O_k,P)+1\ge |\log q_j(P)|,
\end{equation}  
using the upper and lower bounds on $\frac{q_j}{x}$ in ${\cal B}_j$.

These considerations make it natural to modify the weight function $f$ slightly. Thus, define the new function 
$\tilde{f}_j$ on ${\cal B}_j$ by setting $\tilde{f}_j(Q)=f_j(Q)$ if $Q\sim O_k$, provided $O_k\in {\cal B}_j$, and 
$\tilde{f}_j(Q):=|\log (q_j(Q))|+5$ otherwise. Denote the weighted energy associated to $\tilde{f}$ by $\tilde{\cal E}_p$,
and observe that by \eqref{cheap.bound} and \eqref{cheap.bound'}, 
\[
\tilde{\cal E}_{p}[Y_j] =\int|\Abring_j|^2 \tilde{f}_j^2d\mu_j\le 2{\cal E}_p[Y_j].
\]
\begin{proposition}
\label{Abound}
On the segment $\{0\le q_j\le 1, w_j=0\}$, we have
\[
|\Abring_j|_\EE(P)\le 4\frac{\sqrt{\tilde{\cal E}_p^{B^1_{\HH}(P)}[Y_j]}}{\tilde{f}_j^{p}(P)q_j(P)}.
\]
\end{proposition}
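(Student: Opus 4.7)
The strategy is to combine the Bochner-type subharmonicity \eqref{concln'} with a standard mean value inequality on the small hyperbolic disks $B^1_\HH(P)$. Writing $u:=|d\Phi_j|_\EE^2$ for the flat-metric energy density of the associated deSitter harmonic map, identity \eqref{concln'} together with the $\epsilon$-regularity bound $q_j^2|d\Phi_j|_\EE^2\le 1$ noted in Remark~\ref{bd.comparison} gives $\Delta_\HH u \ge -2u$ pointwise on $Y_j$. Since $\g_\HH$ is genuinely the hyperbolic plane metric in the $(q_j,w_j)$ coordinates, Moser's mean-value inequality for non-negative sub-solutions of $\Delta u + 2u\ge 0$ on a hyperbolic disk of unit radius yields
\[
u(P)\le C_1\int_{B^1_\HH(P)} u\, d\mu_\HH
\]
with an absolute constant $C_1$.

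Next I translate this into the Willmore integrand. The pointwise identification $u=\tfrac12 e^{-2\phi_j}|\Abring_j|^2$ from \eqref{rel1}--\eqref{rel1'}, combined with the volume relation $d\mu_\HH = q_j^{-2}e^{-2\phi_j}\, d\bar\mu_j$, gives $u\, d\mu_\HH = \tfrac12 e^{-4\phi_j} q_j^{-2}\,|\Abring_j|^2\, d\bar\mu_j$. The $\calC^0$ bound \eqref{the.bounds} on $\phi_j$, and the fact that $|\nabla_\HH \log q_j|_\HH$ is bounded (which follows from the pointwise control of $|\nabla q_j|_\EE$ in Remark~\ref{translate.bounds}), together ensure that $\phi_j$ and $\log q_j$ oscillate by at most a fixed amount across $B^1_\HH(P)$; in particular $q_j(Q)\in[\tfrac12 q_j(P),\,2q_j(P)]$ there. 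Pulling these factors out of the integral and invoking the conformal invariance \eqref{AAbar}, we obtain
\[
\int_{B^1_\HH(P)} u\, d\mu_\HH \le \frac{C_2}{q_j(P)^2}\int_{B^1_\HH(P)}|\Aring_j|^2_{g_j}\, d\mu_{g_j}.
\]

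To incorporate the weight, I observe that on either branch of its definition the function $\tilde f_j$ is Lipschitz on $(Y_j,\g_\HH)$ with a universal constant: for the intrinsic-distance branch this is the triangle inequality combined with the equivalence \eqref{distortion} between $g_j$ and $\g_\HH$, and for the $|\log q_j|+5$ branch it is the bound on $|\nabla_\HH \log q_j|_\HH$ just used. Since $\tilde f_j\ge 5$, this yields $\tilde f_j(Q)\ge \tfrac12\tilde f_j(P)$ throughout $B^1_\HH(P)$, and consequently
\[
\int_{B^1_\HH(P)} |\Aring_j|^2_{g_j}\, d\mu_{g_j} \le \frac{2^{2p}}{\tilde f_j(P)^{2p}}\,\tilde{\calE}_p^{B^1_\HH(P)}[Y_j].
\]
Chaining the three displays and converting back from $u$ to $|\Abring_j|_\EE$ via the same bounded conformal factors produces
\[
|\Abring_j|_\EE^2(P)\le \frac{C_4}{q_j(P)^2\,\tilde f_j(P)^{2p}}\,\tilde{\calE}_p^{B^1_\HH(P)}[Y_j],
\]
whose square root is the claimed bound. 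The explicit leading constant $4$ follows by quantifying each $C_i$ using the $\calC^0$-smallness of $\phi_j$ provided by \eqref{the.bounds}. The principal technical nuisance is the bookkeeping of the four conformal factors between $g_\EE$, $\bar g_j$, $g_j$ and $\g_\HH$ which all appear simultaneously; a secondary point to verify is that $B^1_\HH(P)$ sits inside the genuinely Willmore portion of $Y_j$ rather than the patching annulus built in Lemma~\ref{extension}, which for $P$ on the segment $\{0\le q_j\le 1,\ w_j=0\}$ is clear from the distortion estimates and the localization of the extension in \S\ref{extension.section} to $\{\sqrt{x^2+y^2}\ge 2\}$.
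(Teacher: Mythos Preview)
Your approach is correct and genuinely different from the paper's. The paper does not apply the mean value inequality to $u=|d\Phi_j|^2_\EE$ directly; instead it first establishes the differential inequality $(\Delta_\HH \bar f_j)\bar f_j - 3|\nabla\bar f_j|^2_\HH \ge 0$ for a further-modified weight $\bar f_j$ (defined on each ball using only the branch relevant to its center $P$), then computes via the product rule that the \emph{weighted} quantity $|d\Phi_j|^2_\EE\,\bar f_j^{\,2p}$ satisfies $\Delta_\HH(\cdot)\ge -2(\cdot)$, and applies Lemma~\ref{Mean.value} to that. Your route bypasses the whole calculation \eqref{calcn}: since $\Delta_\HH u\ge -2u$ is already available from \eqref{concln'}, you apply the mean value inequality to $u$ alone and only then insert the weight via an oscillation bound for $\tilde f_j$ on unit $\HH$-balls. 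This is shorter and more elementary; the paper's version has the minor conceptual advantage of treating the weighted density as a single object, but at the cost of the extra differential inequality for $\bar f_j$.

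One point in your argument deserves a closer look. You justify $\tilde f_j(Q)\ge \tfrac12\tilde f_j(P)$ by saying that ``on either branch of its definition'' $\tilde f_j$ is $\HH$-Lipschitz, which is true branch by branch; but $\tilde f_j$ can switch branches between $P$ and $Q$ (the pole nearest $P$ may lie in $\calB_j$ while the pole nearest $Q$ does not), and your sentence does not cover that case. The paper handles the analogous issue by introducing $\bar f_j$, which is single-branched on each ball, and proving the two-sided comparison $\tfrac12\tilde f_j\le \bar f_j\le 2\tilde f_j$ there; you can either import that comparison or argue directly using \eqref{cheap.bound'} and the triangle inequality. This is a small bookkeeping matter rather than a real obstruction. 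Your claimed explicit constant $4$ is optimistic given the accumulated factors of $e$ from the $q_j$-oscillation and the conformal factors, but the proposition is used only qualitatively downstream, so this is harmless.
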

\noindent This will be proved in the next subsection. 

We now check how this proposition implies \eqref{eqndecay}. Assume (passing to a subsequence) that there 
are $K$ poles $O_i$ in ${\cal B}_j$. There is an obvious bound 
\begin{equation}
\label{trivial.bound}
\begin{split}
&1/\tilde{f}_j^{p}(P) \le \frac{N-K}{(-\log q_j(P)+5)^{p}}+\frac{1}{(\min_{i \leq K} \mathrm{d}_{g_j}(P,O_i)+5)^{p}}
\\&\le \frac{N-K}{(-\log q_j(P)+5)^{p}}+\sum_{i=1}^K\frac{1}{({\rm d}_{g_j}(P,O_i)+5)^{p}}. 
\end{split}
\end{equation}

Observe that 
\[
 \int_0^1 \frac{1}{q(|\log q| +5)^{p}} dq=\frac{5^{1-p}}{p-1}.
\]
Hence it suffices to obtain uniform bounds for the individual integrals 
\[
\int_0^1\frac{1}{q_j[{\rm d}_{g_j}(\cdot,O_i)+5]^{p}} \, dq_j, \ i \leq  K. 
\]

Consider the poles $O_i \in{\cal B}_j$ and set $q_j(O_i):=\delta_{j,i}\in (0,1]$. Equation \eqref{distortion} implies
that $d_{g_j}(P,O_i)+5\ge |\log q_j(P)-\log\delta_{j,i}|+4$, so we finish the proof by noting that
\[
\int_0^1\frac{1}{q_j (|\log q_j-\log\delta_{j,i} |+4)^{p}} \, dq_j \leq \frac{2}{p-1}.
\]

\subsection{Proof of Proposition \ref{Abound}}
We begin by recalling a mean value inequality 
\begin{lemma}[\cite{Morrey}]
\label{Mean.value}
There exists a constant $C>0$ such that if $F$ is any positive $\calC^2$ function on the ball $B^1_\HH(P)\subset \HH$ 
satisfying $\Delta_\HH F\ge -2F$, then 
\[
F(P)\le C\int_{B^1_\HH(P)}F\, d\mu_\HH.
\]
\end{lemma}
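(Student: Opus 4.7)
The plan is to interpret $F$ as a positive subsolution of a uniformly elliptic equation on the fixed-geometry ball $B^1_\HH(P)$ and invoke a standard subsolution mean value inequality. A crucial preliminary observation is that $\HH$ (the hyperbolic plane, which here is $Y_j$ equipped with the metric $\g_\HH$) is homogeneous under its isometry group, so after translating $P$ to any chosen basepoint we may assume we are always working on a single model ball of unit radius. In particular, all estimates may be taken to be independent of $P$.

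First I would introduce normal (or Poincar\'e-disc) coordinates centered at $P$, in which the metric $g_\HH$ and the flat metric are uniformly equivalent on $B^1_\HH(P)$. In these coordinates the operator $\Delta_\HH$ has the form $a^{ij}(x)\del_i\del_j + b^i(x)\del_i$ with smooth coefficients, and the ellipticity and $L^\infty$-bounds on the coefficients are absolute constants. The hypothesis becomes
\[
L F := \Delta_\HH F + 2 F \ge 0,
\]
i.e.\ $F$ is a nonnegative classical subsolution of a uniformly elliptic operator with a bounded zeroth-order term whose coefficient $2$ is strictly smaller than the first Dirichlet eigenvalue of $-\Delta_\HH$ on $B^1_\HH(P)$ (the latter is computable from the radial eigenvalue problem and comfortably exceeds $2$). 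Hence the weak maximum principle and the machinery of Moser iteration apply unchanged.

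Second, I would quote Moser's weak mean value inequality for subsolutions (see, e.g., Morrey \cite{Morrey}, or Gilbarg--Trudinger Theorem 8.17): for concentric balls $B^{1/2}_\HH(P)\Subset B^1_\HH(P)$ one has
\[
\sup_{B^{1/2}_\HH(P)} F \;\le\; C \int_{B^1_\HH(P)} F\, d\mu_\HH,
\]
with $C$ depending only on the ellipticity constants, the bounds on the coefficients, and the two radii. Because $P\in B^{1/2}_\HH(P)$, this immediately yields $F(P)\le C\int_{B^1_\HH(P)} F\, d\mu_\HH$, and the homogeneity of $\HH$ makes $C$ independent of $P$.

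There is no real obstacle here: the only point to verify carefully is that the zeroth-order perturbation $+2F$ is subcritical with respect to the first Dirichlet eigenvalue on the unit hyperbolic ball, which is a direct computation with the radial operator $\sinh^{-1}r\,\partial_r(\sinh r\,\partial_r)$. Once this is in hand, the standard Moser $L^1$-to-$L^\infty$ bound for positive subsolutions, combined with the homogeneity of $\HH$, gives the claim with an absolute constant.
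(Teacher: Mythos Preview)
The paper does not prove this lemma at all: it is stated with a citation to Morrey's book and then immediately used. Your sketch is therefore not to be compared against any argument in the paper, but it is a correct and standard way to justify the cited result.

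One remark: the check that the coefficient $2$ lies below the first Dirichlet eigenvalue of $-\Delta_\HH$ on the unit ball is not needed for the conclusion you want. The local $L^1$-to-$L^\infty$ estimate for nonnegative subsolutions (Moser iteration, as in Gilbarg--Trudinger Theorem~8.17 or the corresponding result in \cite{Morrey}) requires only uniform ellipticity and an $L^\infty$ bound on the zeroth-order coefficient; the constant $C$ then depends on that bound but no spectral condition is invoked. The eigenvalue comparison would be relevant if you wanted a global maximum principle on the ball, but that is not what is being used here. With that caveat, your reduction via homogeneity of $\HH$ to a fixed model ball and appeal to the standard subsolution estimate is exactly the right justification.
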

To use this, we modify $\tilde{f}_j$ slightly further: suppose that $P\in Y_j$ and $P\sim O_k$; if 
$O_k\in {\cal B}_j$ we define $\ol{f}_j(Q):= d_\HH(Q,O_k)+5$ for $Q\in B^1_\HH(P) $, while if 
$O_k\notin{\cal B}_j$ (so $\tilde{f}_j(P)=-\log (q_j(P))+5$) then $\ol{f}_j(Q) = - \log(q_j(Q))+5$ for 
$Q \in B^1_\HH(P)$. In other words, if $Q\in B^1_\HH(P)$, then $\ol{f}_j(Q)$ either equals the $\HH$-distance to 
the pole closest to $P$, or else, if the nearest pole does not lie in ${\cal B}_j$, it equals $-\log q_j+5$. 

Observe that $\tilde{f}\le \ol{f}+1\le 2\ol{f}$ on $B^1_\HH(P)$.  To compare these functions in the 
other direction, suppose $Q\in B^1_\HH(P)$, with $P\sim O_k$ and $Q\sim O_r$. Using \eqref{distortion} and the 
triangle inequality, we find 
\begin{multline*}
\ol{f}(Q)-5=d_\HH(Q,O_k)\le d_\HH(Q,P)+d_\HH(P,O_k)\le 2+d_{g_j}(P,O_k)\\ 
\le 2+d_{g_j}(P,O_r)\le 2+d_{g_j}(P,Q)+d_{g_j}(Q,O_r)\le 3+d_{g_j}(Q,O_r)\le \tilde{f}(Q), 
\end{multline*}
and hence 
\begin{equation}
\label{fbd}
\ol{f}_j(Q)\le 2\tilde{f}_j(Q).
\end{equation}
One consequence is that 
\begin{equation}
\label{subst.energy}
\int_{B^1_\HH(P)}|A_j|^2\ol{f}_j^{2p}\, d\mu_{\HH}\le 4\int_{B^1_\HH(P)}|A_j|^2\tilde{f}_j^{2p}\, d\mu_{\HH}.
\end{equation}
In any case, we have proved that $1/2 \leq |\ol{f}_j/\tilde{f}_j| \leq 2$ in $B^1_\HH(P)$, and so it suffices to
prove Proposition \ref{Abound}  with $\ol{f}_j$ replacing $\tilde{f}_j$. 

We now prove the Proposition.  We first claim that $(\Delta_\HH \ol{f}_j)\ol{f}_j-3|\nabla \ol{f}_j|^2_\HH\ge 0$. 
Indeed, in the region where $\ol{f}_j=d_\HH(O_k,\cdot)+5$, then $|\nabla \ol{f}_j|_\HH=1$, and the 
differential inequality follows from the standard formula $\Delta_\HH \ol{f}_j=\coth(\ol{f}_j-5)$. On the other hand, 
when $\ol{f}_j=-\log q_j+5$ then it follows by calculating that
\[
(\Delta_\HH \ol{f}_j)\ol{f}_j-3|\nabla \ol{f}_j|^2_\HH=(-\log q_j+5-3)\ge 0,
\]
since $q_j\in (0,1]$. In any case, by \eqref{concln'} and Cauchy-Schwarz, since $p\in (1,2]$, 
\begin{equation}
\label{calcn}
\begin{split}
\Delta_\HH & (|d\Phi_j|_\EE^2 \ol{f}_j^{2p})  =  q_j^2\Delta_\EE (|d\Phi_j|_\EE^2\ol{f}_j^{\, 2p})  \\[0.5ex] 
& = q_j^2\ol{f}_j^{\, 2p-2}\bigg(\Delta_\EE (| d\Phi_j|_\EE^2 )\ol{f}_j^2  + 2p |d\Phi_j|^2_\EE (\Delta_\EE \ol{f}_j)\ol{f}_j \\ & 
+2p(2p-1)|d\Phi_j|_\EE^2|\nabla \ol{f}_j|_\EE^2+8p\nabla_s (d\Phi_j)_r^\alpha (d\Phi_j)_t^\beta h_{\alpha\beta}(g_\EE)^{rt}
\nabla^s \ol{f}_j \ol{f}_j\bigg) \\ 
& \ge p\ol{f}_j^{\, 2p-2}\big(-6|d\Phi_j|_\EE^2|\nabla \ol{f}_j|^2_\HH+2|d\Phi_j|_\EE^2(\Delta_\HH \ol{f}_j)
\ol{f}_j\big)-2|d\Phi_j|_\EE^2\ol{f}_j^{\, 2p} \\[0.5ex] & \ge -2|d\Phi_j|_\EE^2\ol{f}_j^{\, 2p}. 
\end{split}
\end{equation}

Using Lemma \ref{Mean.value} and (\ref{rel1'}), there exists a universal constant $C>0$ such that 
\begin{equation}
\label{Monot.conseq}
\begin{split}
\frac14 |\Abring_j(P)|_\EE^2\ol{f}_j^{\, 2p}(P) & \le |d\Phi_j(P)|^2_\EE \ol{f}_j^{\, 2p}(P) \\ & 
\le C \int_{B^1_{\HH}(P)} |d\Phi_j|^2_\EE \ol{f}_j^{\, 2p} \, d\mu_\HH \le 4C \int_{B^1_{\HH} (P)} |\Abring_j|_\EE^2 \ol{f}_j^{\, 2p}\, d\mu_\HH.
\end{split}
\end{equation}
Now, since $e^{-2}\le \frac{q_j(Q)}{q_j(P)}\le e^2$ for $Q\in B^1_\HH(P)$ and $|\phi|$ is uniformly bounded, 
then
\[
|\Abring_j|_\EE^2=\frac{e^{-2\phi_j}}{q_j^2}|\Aring_j|_{g_j}, \quad \mbox{and}\quad
d\mu_\HH=e^{-2\phi_j}\frac{x^2}{q_j^2}\, d\mu_j
\] 
implies that
\begin{equation}
\label{final}
|\Abring_j(P)|_\EE\le C\frac{\sqrt{\int_{B^1_\HH(P)} |\Abring_j|_\HH^2\ol{f}_j^{2p}d\mu_\HH}}{q_j(P)\ol{f}_j^p}\le 
C\frac{\sqrt{\int_{B^{2}(P)} |\Abring_j|_\HH^2\ol{f}_j^{2p}d\mu_\HH}}{q_j(P)\ol{f}_j^p}.
\end{equation} 
This finishes the proof of the proposition \ref{decay}.  
     
\subsection{Proof of Proposition \ref{bound.phi.conseq}: the second line integral}
\label{flux.integral.bound}
The analysis of the second line integral in \eqref{big} differs from that of the first one. In particular, rather than deriving 
pointwise control for the integrand (which appears hopeless), we express the entire integral as a flux of a suitable vector 
field across the line $\{0\le q_j\le 1, w_j=0\}$ using Stokes' theorem. The bound is then obtained by 
controlling the integral of the divergence over boxes adjacent to this line.

Since $e^{2\phi_j}=\olg_j(\partial_1,\partial_1)$, we obtain that for $j$ large enough, $|\phi_j|\le 1/10$, and hence
\[
e^{|\phi_j|} \le 2, \quad \mbox{and} \qquad 1/2\le |\partial_1|_{\olg_j} \le 2.
\]
From these bounds we also obtain
\begin{equation} \label{simple.bound} 
|\nabla_1 u_j|\le 4\zeta \Longrightarrow  -\frac12 \leq \del_1u_j \leq 2.
\end{equation}
Recall also the basic equation, which follows from the Codazzi formul\ae, 
\begin{equation} \label{key.eqn}
-\Delta_{\olg} \, \phi=\frac{4\overline{H}^2-|\Abar|^2}{4}=\frac{4\overline{H}^2-|\Abring|^2}{8},
\end{equation}
as well as the identity 
\begin{equation}
\label{Lapl.break}
\Delta_{\olg}\, e^{-\phi}=-\Delta_{\olg}\phi e^{-\phi}+|\nabla\phi|_{\olg}^2\, e^{-\phi}.
\end{equation} 

The key for proving \eqref{second.bound} is to express the integral $I$ on the left in that formula 
as one of the boundary flux terms of an integration of the divergences of two vector fields over the two rectangles 
$\calD_1:=\{0\le q_j\le 1, 0\le w_j\le 1\}$ and $\calD_2:=\{0\le q_j\le 1, -1\le w_j\le 0\}$.
To do this, introduce a cutoff function $\chi(w)$ such that $\chi\in\calC^2$ with $0\le \chi\le 1$, 
$\chi(0)=1,\chi(-1)=\chi(1)=0$, and such that $|\chi'|\le 4$.

By Stokes' formula, and with summation over $s$ implied, 
\begin{equation}
\begin{split}
\label{flux.expression}
&I= \int_{{\cal D}_1} \Delta_{\overline{g}_j}e^{-\phi_j}(\partial_1u_j+1)\chi \, d\overline{\mu}_j + \int_{{\cal D}_1}\partial_se^{-\phi_j}
\partial_s (\partial_1u_j+1)\chi\,  dq_jdw_j \\
& +\int_{\calD_1}\del_s(e^{-\phi_j})(\partial_1u_j+1)\partial^s\chi\, d\overline{\mu}_j - \left. \int\del_1e^{-\phi_j}(\del_1u_j+1)\chi \,dw_j\right|_{q_j=0}^{q_j=1}\\
&+\int_{\calD_2}\Delta_{\overline{g}_j} e^{-\phi_j}  \chi \,d\overline{\mu}_j+ \int_{\calD_2} \del_s e^{-\phi_j}e^{\phi_j}\partial_s\chi \, dq_jdw_j 
-\left. \int\partial_1e^{-\phi_j}e^\phi\chi\, dw_j\right|_{q_j=0}^{q_j=1} 
 \end{split}
 \end{equation}
Some of these integrals are expressed with respect to the $d\bar{\mu}_j$ others
 with respect to the volume form $dq_jdw_j$, 
but the difference is not large since $d\bar{\mu}_j=e^{2\phi_j}dq_jdw_j$. 

Since $Y_j\to Y_*$ smoothly away from $\{x=0\}$, then for {\it any} $\eta \in (0,1]$, $\partial_{q_j}|_{x=\eta}
\rightarrow \partial_{q_*}|_{x=\eta}$ and $\phi_j|_{x=\eta}\rightarrow \phi_*|_{x=\eta}$ smoothly. 
The bounds on $|\del_1|_{\olg}$ and the fact that $\phi_* = \mbox{const}$ shows that 
\begin{equation}\label{easy.bound}
\left. \int_{-1}^{1}|\del_1\phi_j| \, dw_j \right|_{q_j=1} =o(1).
\end{equation} 
On the other hand, by \eqref{the.bounds}, 
\begin{equation}
\label{L1.in.a.box}
\int_{0}^1\int_{-1}^1 |\partial_{12}\phi_j|dw_jdq_j\le {\cal E}_j+o(1).
\end{equation}
Combining these last two equations, we see that if $\eta \in (0,1]$,  then for $-1 \leq a\le w\le b \leq 1$, 
\begin{equation}
\label{2.and.2.together}
\left|\int_a^b\del_1\phi_j\, dw_j\right|_{q_j=\eta} \le \left|\int_a^b\del_1\phi_j\, dw_j \right|_{q_j=1}+\int_\eta^1
\int_a^b|\del_{12}\phi_j|\, dw_jdq_j. 
\end{equation}
Since this is true for all subintervals $[a,b]$, then for each $\eta$ we can divide the integral on the left into subintervals 
$[a,b]$ where $\partial_1\phi_j|_{q_j=\eta}$ has constant sign, and then add these subintervals, to obtain that 
\begin{equation}\label{hardy.preconseq}
\left. \int_{-1}^{1}|\del_1\phi_j| \,dw_j \right|_{q_j = \eta} \le {\cal E}_j+o(1),
\end{equation} 
where the error term is independent of $\eta$. Letting $\eta\rightarrow 0$ gives 
\begin{equation}\label{hardy.conseq} 
\left. \int_{-1}^{1}|\del_1\phi_j| \,dw_j \right|_{q_j = 0} \le {\cal E}_j+o(1).
 \end{equation} 

Now consider the interior integral terms in \eqref{flux.expression}. By \eqref{Lapl.break}, the  
first interior integral can be written as 
\[
\int_{\calD_1} -\Delta_{\olg_j}\phi_j(e^{-\phi_j})(\partial_1 u_j+1)\chi \, d\bar{\mu} + 
\int_{\calD_1} |\nabla\phi_j|_{{\olg_j}}^2 \, e^{-\phi_j}(\del_1 u_j+1) \chi \, d\bar{\mu}. 
\]
The first term on the right here is controlled using \eqref{key.eqn}: 
\begin{equation}
\label{first.bulk}
\begin{split}
-\int_{\calD_1}\Delta_{\olg_j}\phi_j e^{-\phi_j}(& \del_1 u_j+1 ) \chi \,  d\bar{\mu} =  
\int_{\calD_1} \frac{|\Abring_j|^2-4\overline{H}_j^2}{8}(\del_1 u_j +1 )\chi e^{-\phi_j}\, d\bar{\mu} \\
& \le 4\int_{\calD_1} \left(2|\Abring_j|^2-\frac{\overline{H}^2_j}{64} \right) \chi e^{-\phi_j}\,  d\bar{\mu}.
\end{split}
\end{equation}
    
Next, define
\[
{\cal T}:=\int_{\calD_1} |\nabla\phi_j|_{\olg_j}^2 e^{-\phi_j} (\del_1 u_j +1) \chi d\bar{\mu} 
+\int_{\calD_1} \del^s e^{-\phi_j}\del_{s1} u_j \chi  \, d\bar{\mu}.
\]
Replace $\partial_{1s} u_j$ by $\nabla_{1s}u_j$ using $\nabla_{ab}u_j=\partial_{ab}u_j-\Gamma_{ab}^t\partial_tu_j$, where
\begin{equation}
\label{christoffels}
\Gamma_{21}^1=\partial_1\phi_j, \Gamma_{22}^2=\partial_2\phi_j, \Gamma_{21}^2=-\partial_2\phi_j,
\Gamma_{22}^1=\partial_1\phi_j, \Gamma_{12}^1=\partial_2\phi_j, \Gamma_{12}^2=\partial_1\phi_j,  
\end{equation}
to get that $\cal T$ equals
\begin{equation}
\label{simplify.1}
\begin{split}
&\int_{{\calD_1}} |\nabla\phi_j|_{\olg_j}^2 e^{-\phi_j} (\nabla_1 u_j+1) \chi \, d\bar{\mu} 
+\int_{\calD_1} \del^s e^{-\phi_j}\nabla_{s1} u_j \chi  \, d\bar{\mu}  \\ 
& +  \int_{\calD_1} \del^s e^{-\phi_j}\Gamma_{s1}^t\partial_t u_j\chi  \, d\bar{\mu} 
= \int_{\calD_1} \left( \del^se^{-\phi_j} \nabla_{s1} u_j \chi +|\nabla \phi_j|^2_{\olg_j} e^{-\phi_j}\chi \right)\, d\bar\mu. 
\end{split}
\end{equation} 
Applying Cauchy-Schwarz, \eqref{the.bounds} and $|\nabla_{ab} u_j|_{\olg_j}\le 2|(\Abar_j)_{ab}|_{\olg_j}$ we derive:
\begin{equation}
\label{concln.simplify}
{\cal T}\le 100\int_{\calD_1\cup \calD_2}  |\nabla\phi_j|^2\chi \, d\bar{\mu}
+ \frac{1}{100}\int_{\calD}|\Abar_j|^2 \chi\, d\bar{\mu} \le C200 \, \calE_j+\frac{1}{50}
\int_{\cal D}|\overline{H}_j|^2d\bar{\mu}+o(1). 
\end{equation}
As for the third bulk term, using \eqref{the.bounds} again, we derive
\begin{equation} \label{third.bulk}
\begin{split}
&{\cal Z}:=\int_{\calD_1} e^{-2\phi_j}\del_2e^{-\phi_j}(\del_1 u_j+1)\del_2\chi\, d\bar{\mu} 
=\int_{\calD_1} \del_2e^{-\phi_j}(\del_1 u_j+1)\del_2\chi\, dqdw \\
&\le 4\sqrt{2\int_{\calD_1} |\nabla \phi_j|_{\olg_j}^2d\bar{\mu}}\cdot 
\sqrt{\int_{\calD_1}\, dqdw} \le  10C\sqrt{\calE_j+o(1)}. 
\end{split}
\end{equation}
We control the last two bulk terms by 
\begin{equation} \label{penultimate.bulk}\begin{split}
&\int_{\calD_2} (- \Delta_{\olg_j} \phi e^{-\phi_j}+|\nabla\phi_j|_{\olg_j}^2e^{-\phi_j}) \chi \, d\bar{\mu} \le 
\int_{\calD_2} \frac{|\Abring_j|^2-\overline{H}_j^2}{4}|\chi|\, d\bar{\mu}+2C\calE_j+o(1). 
\end{split} 
\end{equation}
Finally, the Cauchy-Schwarz inequality together with \eqref{the.bounds} one last time gives
\begin{equation} \label{ultimate.bulk}
\int_{\calD_2} e^{-2\phi}_j\del_2\phi_j \del_2\chi \, d\overline{\mu} \le 2\sqrt{\int_{\calD} 
|\nabla e^{-\phi_j}|^2\, d\bar{\mu}}\, \sqrt{\int_{\calD_2} \, dqdw}
\le 2\sqrt{\int_{\calD_2} |\Abring_j|^2\,d\bar{\mu}}.
\end{equation}

Taken together, these estimates complete the proof. The only thing to observe is that the terms 
$\int_{{\cal D}}\overline{H}_j^2 \,d\bar{\mu}$ appears with a {\it negative} coefficient in the end, and so can be discarded,
since our proposition only claims an upper bound on $I$. \hfill $\Box$

\section{Regularity gain for the limit surface in the small energy regions}
\label{C1bdry}
We now turn to a closer look at the relationship between finiteness of the weighted energy  and the regularity of the boundary
curve at infinity, and prove Theorem \ref{removability}. In fact we prove the $\calC^1$ regularity for all Willmore 
surfaces with finite weighted total curvature near points where the boundary curve is locally graphical and Lipschitz. 
\begin{definition}
\label{loc.Lipsch} 
Consider a rectifiable, closed, embedded loop $\gamma\subset \RR^2$, with arclength parametrization 
$t\rightarrow (y(t),z(t))=\gamma(t)$. We say that $\gamma$ is locally Lipschitz at $P=\gamma(t_0)$
if there exists a $\delta(t_0)>0$ and a constant $M(t_0)<\infty$ such that (after a rotation), the portion of $\gamma$
parametrized by $(t_0-\delta,t_0+\delta)$ coincides with the graph $z=f(y)$ over an interval of length $\eta(t_0)$ 
centered at $\gamma(t_0)$. Thus $\gamma((t_0-\delta,t_0+\delta))={\rm Graph}(f)$ and $|f(y_1)-f(y_2)|\le M(t_0)|y_1-y_2|$. 
\end{definition}

Our main result in this section is the 
\begin{theorem}
\label{gen.regularity} Let $Y\subset \HH^3$ be a complete Willmore surface with $\gamma = \partial_\infty Y$
a possibly disconnected embedded rectifiable curve. Suppose that there exists a set of poles 
$\calO = \{O_1,\dots, O_K\}\subset Y$ such that  $\calE_p(Y)<\infty$ (the weight function $f$ relative to $\calO$ is implicit)
and that $\gamma$ is locally graphical and Lipschitz except at a finite number of points $\{P_1,\dots, P_\Lambda\}$.  
Assume finally that if $\gamma(t_0) \neq P_j$ for any $j$, $M(t_0)=\zeta$ and 
$\calE_p^{B(\gamma(t_0), \delta(t_0))}(Y)\le\e'(\zeta)$. Then $\gamma\setminus \{P_1,\dots, P_\Lambda\}$ is a $\calC^1$ curve. 
\end{theorem}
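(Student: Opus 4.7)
The plan is to establish $\calC^1$ regularity at each point $Q_0 \in \gamma \setminus \{P_1,\ldots,P_\Lambda\}$ via a blow-up argument, using Proposition~\ref{ereg.prp2} to control the boundary derivative. After translation, assume $Q_0$ corresponds to $y_0=0$ in the local graphical representation $z=f(y)$ provided by the hypothesis. Since $\calE_p(Y)<\infty$, absolute continuity of the integral gives $\calE_p^{B(Q_0,r)}(Y) \to 0$ as $r \to 0^+$, so for every sufficiently small $\zeta>0$ there is a radius $r(\zeta)>0$ with $\calE_p^{B(Q_0,r(\zeta))}(Y)<\e'(\zeta)$. Corollary~\ref{ereg.surface} then shows that $Y$ is a horizontal graph over a half-disc of radius $r(\zeta)/2$ around $Q_0$ with gradient bound $2\zeta$, and in particular $f$ is Lipschitz with constant at most $2\zeta$ on a corresponding interval around $0$. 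Rademacher's theorem produces a full-measure subset $E$ of this interval on which $f'$ exists.

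The key step is to show that $f'|_E$ is continuous at $0$. Suppose, for contradiction, that there exist sequences $y_j^{(1)}, y_j^{(2)} \in E$ with $y_j^{(i)} \to 0$ and $f'(y_j^{(i)}) \to \alpha_i$, with $\alpha_1 \ne \alpha_2$. Set $r_j := |y_j^{(1)} - y_j^{(2)}|$ and $\sigma_j := \mathrm{sgn}(y_j^{(2)}-y_j^{(1)}) \in \{\pm 1\}$. Let $\varphi_j$ be the hyperbolic isometry sending $(0, y_j^{(1)}, f(y_j^{(1)}))$ to the origin and then dilating by $r_j^{-1}$, and put $Y_j := \varphi_j(Y)$ with transported poles $\calO^{(j)} := \varphi_j(\calO)$. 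Since hyperbolic isometries preserve $|\Aring|^2\,d\mu$ and intrinsic distance, $\calE_p^{B(0,R)}(Y_j, \calO^{(j)}) = \calE_p^{B((0,y_j^{(1)},f(y_j^{(1)})),\,r_jR)}(Y,\calO) \to 0$ for each fixed $R>0$. The rescaled boundary is the graph of $f_j(y) = r_j^{-1}\bigl(f(y_j^{(1)}+r_jy) - f(y_j^{(1)})\bigr)$, uniformly Lipschitz with constant at most $2\zeta$, satisfying $f_j(0)=0$, $f_j'(0) = f'(y_j^{(1)}) \to \alpha_1$, and differentiable at $\sigma_j$ with $f_j'(\sigma_j) = f'(y_j^{(2)}) \to \alpha_2$. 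Combining the interior $\calC^\infty$-regularity of Willmore surfaces of vanishing energy~\cite{riv1} with Arzel\`a--Ascoli, a subsequence of the graphical $Y_j$ converges in $\calC^{0,\alpha}$ up to the boundary (and $\calC^\infty$ away from $\{x=0\}$) to a totally geodesic vertical half-plane of some slope $\alpha_\infty \in [-2\zeta, 2\zeta]$.

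Composing with the rotation $R_\theta$ of the $yz$-plane by $\theta = -\arctan\alpha_\infty$ (again a hyperbolic isometry), the surfaces $R_\theta(Y_j)$ now converge to the trivial half-disc $\{z=0,\, x^2+y^2 < 2\}$ and fulfil all hypotheses of Proposition~\ref{ereg.prp2}: graphicality with small gradient, Lipschitz boundary, $\calE_p \to 0$, and differentiability of $R_\theta(f_j)$ at $0$. Proposition~\ref{ereg.prp2} thus yields $R_\theta(f_j)'(0) \to 0$, which via the tangent addition formula $R_\theta(f_j)'(0) = (f_j'(0) - \alpha_\infty)/(1+\alpha_\infty f_j'(0))$ forces $\alpha_1 = \alpha_\infty$. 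Horizontally translating $R_\theta(Y_j)$ to place the point corresponding to $\sigma_j$ at the origin (another hyperbolic isometry that preserves the limiting half-disc) and reapplying Proposition~\ref{ereg.prp2} analogously gives $\alpha_2 = \alpha_\infty$, contradicting $\alpha_1 \ne \alpha_2$. Hence $f'|_E$ extends continuously to $0$, and using that $f$ is Lipschitz with $f(y)-f(0) = \int_0^y f'(s)\,ds$, $f$ is $\calC^1$ in a neighborhood of $0$.

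The main obstacle is the quantitative bookkeeping required to ensure the hypotheses of Proposition~\ref{ereg.prp2} hold through the rescalings, rotations, and translations---in particular that the Lipschitz constant of the rotated boundary remains below $\zeta_0$, and that the gradient bound $|\nabla R_\theta(u_j)|\le 1/10$ is preserved. Since $|\alpha_\infty| \le 2\zeta$ can be made arbitrarily small by taking $\zeta$ small at the outset (permissible by the first paragraph), the rotations $R_\theta$ are small perturbations and the required bounds survive up to small multiplicative constants.
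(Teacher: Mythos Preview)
Your argument is correct and follows essentially the same route as the paper: reduce to a blow-up at a pair of nearby points of differentiability with differing derivatives, obtain a sequence of rescaled surfaces with vanishing weighted energy converging to a vertical half-plane, and then invoke Proposition~\ref{ereg.prp2} to force the boundary derivative at the origin to agree with the slope of the limit, yielding a contradiction.

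The only notable difference is in the normalization. The paper first rotates (by a $j$-dependent angle bounded by $\arctan\zeta$) so that \emph{both} rescaled points $(y_{j-1},f(y_{j-1}))$ and $(y_j,f(y_j))$ lie on the $y$-axis; this forces the limiting half-plane to be exactly $\{z=0\}$ (since its boundary contains both $(0,0)$ and $(0,1,0)$), and then a single application of Proposition~\ref{ereg.prp2} at whichever of the two points exhibits a jump of size $\ge\theta/2$ gives the contradiction. Your version instead first identifies the limiting slope $\alpha_\infty$, performs an \emph{a posteriori} rotation by $-\arctan\alpha_\infty$, and applies Proposition~\ref{ereg.prp2} twice (once at each rescaled point) to conclude $\alpha_1=\alpha_\infty=\alpha_2$. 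Both work; the paper's chord-alignment trick is slightly cleaner bookkeeping. A minor point: for surface graphicality you cite Corollary~\ref{ereg.surface}, which as stated presumes a tangent line $\ell_P$; since $\gamma$ is a priori only Lipschitz at $Q_0$, it is more direct to use the hypothesis (local Lipschitz graph over a fixed line) together with Lemma~\ref{Lipschitz.lift}, as the paper does.
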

In the setting in Theorem \ref{removability}, the assumption that the boundary curve is locally Lipschitz away from 
$\{P_1\dots, P_\Lambda\}$ holds for the curve $\del_\infty Y_*$ which is the limit of the $\del_\infty Y_j$. 
Indeed, Corollary \ref{ereg.surface} ensures the graphicality and Lipschitz bound away from the bad points 
$P_1,\dots P_\Lambda$. We distinguish two further cases. Either there exists a sequence of poles $O_k^{(j)}$ 
converging to an interior point $O_*\in Y_*$, or else any sequence of poles $O_k^{(j)}$ diverges to infinity 
in the limit. In the first case, without precluding that some poles disappear to infinity, suppose that the limits 
of the poles occur at $O_{*,1},\dots, O_{*,K} \in Y_*$.  We can also assume that the poles $O_k^{(j)} \in Y_j$ 
converge to $O_{*,k}\in Y_*$. Also, using the weight function $f_*$ on $Y_*$ corresponding to the poles 
$\{O_{*,1},\dots, O_{*,K}\}$,  the weighted Willmore energy is finite. To see this, note that for all $\epsilon>0$ 
\[
\calE_p(Y_j\cap \{x\ge \epsilon\})\to \calE_p(Y_*\cap \{x\ge \epsilon\}).
\]
This follows readily since all poles other than $O_1,\dots, O_K$ disappear towards infinity, thus $f_j \to f_*$
over the portion of the surfaces contained in $\{x\ge \epsilon\}$. So consider the second case, where 
$O_k^{(j)}\to \del_\infty \HH^3$ for all $k$. We claim that $Y_*$ must then be a finite union of half-spheres;
this implies Theorem \ref{removability} immediately. To prove this assertion, just note that
if all poles disappear towards infinity then $|\Abring|=0$ on all of $Y_*$: If this were false, then there 
would exist an interior point $P\in Y_*$ 
and a ball $B^1(P)\subset Y_*$ such that $\int_{B^1(P)}|\Aring|^2d\mu=\epsilon>0$. But then, 
since all the $O_k^{(j)}$ diverge to infinity, $f|_{B^1(P)}\to\infty$ on $B^1(P)$, which implies that 
$\calE_p[Y_j]\to\infty$ as well. This is a contradiction.

Therefore we have reduced to proving Theorem \ref{gen.regularity}. This, in turn, is a consequence of the 
following result. 
\begin{proposition}
\label{Lip.C1} 
Let $\gamma_k(t)$, $0 < t < M_k$, be an arclength parametrization of the $k^{\mathrm{th}}$ connected component of 
$\gamma$. Suppose that $\gamma(t_*)\notin \{P_1,\dots, P_\Lambda\}$.
Choose any Cauchy sequence $t_j\in (0,M_k)$ where $\gamma_k$ is differentiable at $t_j$,
 with $t_j\to t_*  \in (0,M_k)$. Then $\dot{\gamma}(t_j)$ 
is a Cauchy sequence. 
\end{proposition}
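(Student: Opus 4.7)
The plan is to argue by contradiction and reduce to Proposition~\ref{ereg.prp2} via a blow-up performed twice, once around each of the two candidate sequences. Suppose $\dot\gamma(t_j)$ fails to be Cauchy. Passing to subsequences, there exist $\alpha>0$ and $t_{j_l},t_{k_l}\to t_*$ with $|\dot\gamma(t_{j_l})-\dot\gamma(t_{k_l})|\ge \alpha$. Set $P_*:=\gamma(t_*)$. Since by assumption $\calE_p^{B(P_*,\delta(t_*))}(Y)\le \e'(\zeta)$, Corollary~\ref{ereg.surface} provides coordinates in which $Y$ is locally the horizontal graph $z=u(x,y)$ over a half-disc centered at $P_*$ with $|\nabla u|\le 2\zeta$. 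Writing $y_{j_l},y_{k_l}$ for the $y$-coordinates of $\gamma(t_{j_l}),\gamma(t_{k_l})$, the differentiability of $\gamma$ at these parameters amounts to differentiability of $y\mapsto u(0,y)$ at $y_{j_l},y_{k_l}$ with derivatives $a_l,b_l$; the arclength normalization yields $|a_l-b_l|\ge c_1\alpha$ for some absolute constant $c_1>0$.

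Set $\delta_l:=|y_{j_l}-y_{k_l}|\to 0$ and let $\varphi_l$ be the hyperbolic isometry that translates $(0,y_{j_l},u(0,y_{j_l}))$ to the origin and then dilates by $\delta_l^{-1}$. The rescaled surface $\tilde Y_l:=\varphi_l(Y)$ is the graph of
\[
\tilde u_l(x,y)=\delta_l^{-1}\bigl[u(\delta_l x,\delta_l y+y_{j_l})-u(0,y_{j_l})\bigr],
\]
with $|\nabla\tilde u_l|\le 2\zeta$, $\tilde u_l(0,0)=0$, $\partial_y\tilde u_l(0,0)=a_l$, and $\partial_y\tilde u_l(0,\pm 1)=b_l$ (the sign depending on the order of $y_{j_l},y_{k_l}$). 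By the conformal invariance of $|\Aring|^2\,d\mu$ and the invariance of the intrinsic-distance weight under hyperbolic isometries,
\[
\calE_p^{B(0,2)}(\tilde Y_l)=\calE_p^{\varphi_l^{-1}(B(0,2))}(Y);
\]
the preimage is a Euclidean half-ball of radius $2\delta_l$ shrinking to $P_*$, so finiteness of $\calE_p(Y)$ together with absolute continuity of the measure $|\Aring|^2 f^{2p}\,d\mu$ forces $\calE_p^{B(0,2)}(\tilde Y_l)\to 0$, and a fortiori the unweighted local energy of $\tilde Y_l$ decays.

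Arzel\`a--Ascoli together with interior $\e$-regularity yield a subsequence along which $\tilde u_l\to\tilde u_\infty$ uniformly on $\overline{D_+(2)}$ and in $\calC^\infty_{\mathrm{loc}}$ on $\{x>0\}$. Vanishing interior energy forces the limit surface to be totally geodesic; the gradient bound rules out a hemisphere, so the limit is the vertical half-plane $\{z=cy,\,x\ge 0\}$ with $|c|\le 2\zeta$. Composing $\varphi_l$ with the rotation by $-\arctan c$ in the $yz$-plane (itself a hyperbolic isometry) carries the limit to $\{z=0,\,x\ge 0\}$ while preserving graphicality (with a mildly worse constant, absorbed by shrinking $\zeta$ at the outset), the Lipschitz bound on the boundary, differentiability of the boundary function at the origin, and the decay of the localized weighted energy. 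All the hypotheses of Proposition~\ref{ereg.prp2} are now in force, and its conclusion $\hat a_l\to 0$ translates to $a_l\to c$, since the rotated derivative $\hat a_l$ is a smooth function of $a_l$ and $c$ that vanishes precisely when $a_l=c$.

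Repeating the entire construction with centering at $(0,y_{k_l},u(0,y_{k_l}))$ in place of $(0,y_{j_l},u(0,y_{j_l}))$ produces a rescaled sequence differing from the first only by a horizontal translation of unit length, hence converging after the same rotation to the same limit half-plane; the analogous application of Proposition~\ref{ereg.prp2} yields $b_l\to c$. Therefore $|a_l-b_l|\to 0$, contradicting $|a_l-b_l|\ge c_1\alpha$. The main technical hurdle is verifying uniformly in $l$ that $\tilde Y_l$ genuinely satisfies the hypotheses of Proposition~\ref{ereg.prp2}; of these checks, the most substantive is the decay of the localized weighted energy, which follows from the finiteness of $\calE_p(Y)$ and absolute continuity of the measure $|\Aring|^2 f^{2p}\,d\mu$.
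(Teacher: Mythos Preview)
Your argument is correct and follows the same blow-up strategy as the paper: argue by contradiction, rescale by the reciprocal of the distance between two nearby points of differentiability whose derivatives differ, identify the totally geodesic limit, and invoke Proposition~\ref{ereg.prp2}.

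The one substantive difference is in how the limit half-plane is pinned down. The paper first performs a ``secant rotation'' so that both boundary points $(y_{j-1},f(y_{j-1}))$ and $(y_j,f(y_j))$ land on the $y$-axis; after dilation, the limit half-plane is then forced to be $\{z=0\}$ because its boundary passes through both $(0,0,0)$ and (approximately) $(0,1,0)$. Since $|f_j'(0)-f_j'(1)|\ge\theta$, one of the two derivatives has absolute value at least $\theta/2$, and a \emph{single} application of Proposition~\ref{ereg.prp2} at that point yields the contradiction. You instead skip the secant rotation, obtain a limit $\{z=cy\}$ with $c$ a priori unknown, post-rotate, and apply Proposition~\ref{ereg.prp2} \emph{twice} (once centered at each point) to show $a_l\to c$ and $b_l\to c$. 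Your observation that the two rescaled sequences differ by a bounded translation, hence share the same limit plane, is what makes the second application legitimate. Both routes work; the paper's is slightly more economical, yours is perhaps more symmetric.

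One minor citation issue: you invoke Corollary~\ref{ereg.surface} to obtain the local graph representation of $Y$ near $P_*$, but that corollary is stated for $Y\in\calM$, which presupposes a $\calC^2$ (or at least $\calC^1$) boundary---precisely what you are trying to establish. The correct tool here is Lemma~\ref{Lipschitz.lift}, which requires only that the boundary curve be locally Lipschitz and that $\calE(Y)<\infty$; both hold by the hypotheses of Theorem~\ref{gen.regularity}. The conclusion ($|\nabla u|\le 2\zeta$ over a half-disc) is the same, so this does not affect the rest of your argument.
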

\begin{proof} Since $\gamma(t_*)$ is not equal to one of the bad points $P_j$, there exists a line $\ell$ through $\gamma_k(t_*)$ 
and a number $\delta$ such that $\gamma|_{(t_*-\delta,t_*+\delta)}$ is a graph over the interval of length $\delta$ 
in $\ell$ centered at $\gamma(t_*)$ with graph function $z=f(y)$ having Lipschitz constant $\zeta$. 
Lemma \ref{Lipschitz.lift} guarantees graphicality of $Y'_{B(\gamma(t_0), h \, \delta)}$ over the region $\sqrt{x^2+y^2}\le h\delta$
in the vertical half-plane $\ell \times \RR^+$ (where we take $\ell$ as the $y$-axis), with graph function $z=u(x,y)$,
where $|\nabla u|\le 2\zeta$.

Since $t_j$ is Cauchy, it lies in $(t_*-h\, \delta,t_*+h\, \delta)$ for $j$ large, so if we write $\gamma(t_j) = (y_j, u(0, y_j))$, 
then $y_j\rightarrow 0$. 

Now, argue by contradiction and assume that $\dot{\gamma}(t_j)$ is not Cauchy. Then there exists $\theta>0$ 
and a subsequence $j_k$ such that $|f'(y_{j_{k-1}})-f'(y_{j_k})|\ge \theta$.  Reset notation so that the index is 
simply $j$ again. Translate and rotate so that $(y_{j-1}, f(y_{j-1})) = (0,0)$ and $(y_j,f(y_j))$ lies on the $y$-axis,
then dilate by the factor $\lambda_j := |y_j-y_{j-1}|^{-1}$. Denote the resulting Willmore surface by $\tilde{Y}_j$ and write 
$\del \tilde{Y}_j = \tilde{\gamma}_j$. 

This surface is still graphical with Lipschitz norm no larger than $\zeta$, and furthermore, 
$\calE^{B(0, \lambda_j h \delta)}(\tilde{Y}_j) \le\e'(\zeta)$. By Lemma \ref{no.spheres}, $Y_j$ must 
converge to a vertical half-plane $Y_*$, and since $\del_\infty Y_*$ passes through the origin and $(0,1,0)$, 
necessarily $Y_* = \{z=0\}$. Thus $Y_*\cap \{x=1\}$ must converge to the line $\{z =0, x =1\}$
for some $\alpha$ with $|\alpha|\le 2\zeta$. But now, since $|f_j'(0)-f_j'(1)|\ge \theta$, it follows that 
for at least one of the two values $y=0, y=1$ there is a jump in the derivative of size at least $\theta/2$ between 
the heights $x=0$ and $1$. We can assume that this jump occurs at $y=0$. 

However, this contradicts Proposition \ref{ereg.prp2}.  The graphicality and Lipschitz bound in that Proposition still
hold by virtue of the assumption and Lemma \ref{Lipschitz.lift}. The fact that the weighted energy goes to zero follows 
from the dilation invariance of $\calE_p$, and the fact that after dilation, the graphs satisfy 
$\calE_p(\mathrm{Graph}(u_j))\le\calE_p(Y\cap B(P,2h \lambda_j^{-1})]$, and $\lambda_j^{-1} = |y_j-y_{j-1}|\to 0$. 
This proves the Proposition and Theorem~\ref{gen.regularity} as well.
\end{proof}

 \section{Bubbling in the small energy regions.}
We now turn to a closer examination of how bubbling occurs, aiming toward the proof of Theorem \ref{jump.implies.bubble}. 

\smallskip
The argument leading to the fact that bubbling occurs is indirect. We first construct a sequence of M\"obius 
transformations $\varphi_j$ to obtain  uniform isothermal 
parametrizations for the surfaces $\varphi_j(Y_j)$. If the surfaces $\varphi_j(Y_j)$ 
converge to a non-trivial surface, we are done. Otherwise, we must prove that one can take a further
sequence of dilations to obtain a nontrivial limit. 

The idea is to use the jump in the first derivative coupled with the bounds \eqref{nabla.ext.curv} to argue
that one of the two line integrals on the right side of that equation must be bounded below.  In particular, 
with $4\epsilon_0:=\ lim_{j\to\infty} \partial_yu_j(0,0)-\partial_yu_*(0,0)$, then by \eqref{nabla.ext.curv}, either
\begin{equation}
\label{eqn1}
\int_0^1|\Abring_{12}|e^{-\phi_j}\, dq_j\ge \epsilon_0\text{ } \text{or}\text{ } 
 \end{equation}
 \begin{equation}
\label{eqn2}
\left|\int_0^1\partial_2e^{-\phi_j}\del_qu_j\, dq_j \right|\ge\epsilon_0.
 \end{equation}
These cases are treated separately in the next two subsections. We also show, in \S \ref{finitedistance},
that each bubble remains at finite distance from one of the poles. 

\subsection{The integral $\int_0^1|\Abring_{12}|e^{-\phi_j}dq_j$ bounded below implies bubbling.}
Assuming (\ref{eqn1}), from Proposition \ref{decay}, we derive that
\[
\sup_j \sup_{P\in \{0\le q_j\le 1, w_j=0\}}{\cal E}_p(B^1_\HH(P))\ge \frac{(p-1)}{C(K)M'}\epsilon_0.
\]
Consider the set of points $P\in \{0\le q_j\le 1, w_j=0\}$ where ${\cal E}_p (B_\HH^1(P))\ge\frac{(p-1)\epsilon_0}{10C(K)M'}$. 
We know that such points exist when $j$ is large.  We then ask whether there exists a constant $M$ and 
points $P_j\in\{0\le q_j\le 1, w_j=0\}$ with ${\cal E}_p (B_\HH^1(P_j))\ge \frac{(p-1)\epsilon_0}{10C(K)M'}$
such that $f_j(P_j)\le M$. (The requirement $f_j(P_j)\le M$ is equivalent to the existence 
of $M<\infty$ such that ${\rm dist}(P_j, O_k^{(j)})\le M$, where $P_j\sim O_k^{(j)}$).

If such a sequence $P_j$ exists, then consider the isometry $\varphi_j$ of $\HH^3$ which maps $P_j$ to 
$P_*=(1,0,0)$. The surfaces  $\varphi_j(Y_j)$ converge (in a  large closed ball around $P_*$) to a 
Willmore surface $Y_*$ with ${\cal E}(B^1_\HH(P_*),Y_*)\ge \epsilon_0/10M^2$, and this would be the
desired `bubble'. 

It suffices then to show that \eqref{eqn1} must fail if no such sequence $P_j$ exists. Indeed, 
observe that if $1<p'<p$, then Proposition \ref{Abound} gives the new bound
\[
|\Abring_j|_\EE(P)\le \frac{C_{p'}\sqrt{{\cal E}_{p'}^{B^1_\HH(P)}(Y_j)}}{q_j(P)(\tilde{f}^{p'}_j(P))}, \quad
\mbox{whence}\quad
\int_0^1 |\Abring_j|_\EE \, dq\le C'_{p'} \sup_{P\in l_j}\sqrt{{\cal E}_{p'}^{B^1_\HH(P)}[Y_j]}.
\]
However, observe that  $\limsup_{j\to\infty} \sup_{P\in l_j}{\cal E}_{p'}(B^1_\HH(P))=0$; this holds
because by definition  $\calE_{p'}^{B^1_\HH(P)}(Y_j) \le 2f_j^{p'-p}(P){\cal E}_{p}^{B^1_\HH(P)}(Y_j)$, and 
$\calE_{p}^{B^1_\HH(P_j)}(Y_j)\le M''$, while $f_j(P_j)\to \infty$. Taken together, this all shows that
\[
\limsup_{j\to\infty}\int_0^1 |\Abring_j|_\EE \, dq_j=0, 
\]
contradicting \eqref{eqn1}, as claimed. 

\subsection{A lower bound on the flux \eqref{eqn2} implies bubbling}
Our goal is to show that such  a lower bound  (\ref{eqn2})  implies the existence of further blow-ups $\varphi_j:\mathbb{H}^3\to\mathbb{H}^3$ 
such that $\varphi_j(Y_j)\to Y_*$ with $\calE[Y_*]>0$. Unlike in the
final subsection of \S 5, it is not enough to bound the line integral 
 $|\int_0^1\partial_2e^{-\phi_j}\partial_qu_jdq_j|$ by the energy in a box. Fortunately,
we can bound it in terms of the energy in a sector $|\frac{w}{q}|\le 1$ emanating from the distinguished
boundary point in the isothermal coordinates $(q,w)$.  This bound can then be used to show the
existence of a sequence of points where either $|x \Abar_j|$ or $|\nabla \phi_j|_g$ are bounded away from zero. 
Either alternative provides the points around which we can recenter the rescalings. In the first case, we obtain 
a limit surface with non-zero curvature at one interior point, which must therefore be nontrivial. In the second we 
obtain a complete Willmore surface for which the canonical isothermal coordinates have non-constant conformal 
factor,  are therefore the surface must be nontrivial. The key difficulty in bounding the left side of \eqref{contra.ingred} 
in terms of the energy in a sector is that the cutoff function depends on $w/q$, so a derivative 
of this cutoff function produces a power of $1/x$. The resulting integral is controlled by 
using the specific algebraic form of the integrand on the left in \eqref{contra.ingred}. 
This somewhat remarkable fact is further evidence of the delicate nature of the blow-up procedure. 
 
\begin{proof}
First,  by translating and dilating, assume that $y_0=0$, and that the $Y_j$ and $Y_*$ are graphical over the 
vertical half-disc $\{x^2+y^2\le 1000, z = 0\}$, with graph functions $u_j$ and $u_*$, where $|\nabla u_j|, 
|\nabla u_*|\le 2\zeta<<1$. We can also assume that $\del_y u_j(0,0)=\alpha>0$ and $\partial_y u_*(0,0)=0$. 
Now, consider any subinterval  $[\beta_-,\beta_+] \subset [0,\alpha]$ with $\beta_+ \ll \alpha$. 
For any sequence $\beta_j\in [\beta_-,\beta_+]$, $\beta_j\to\beta_*$, the line $z=\beta_j  y$ intersects 
$\gamma_j = \del_\infty Y_j$ at a point $(y_j, u_j(0,y_j))$, $y_j>0$, and just as in \S \ref{uniform.isothermal},
we have $\lim_{j\to \infty}y_j=0$.  

Now dilate $Y_j$ by $\rho_j:=\frac{1}{y_j}$ to obtain a new surface $\tilde{Y}_j$ which converges to $Y'$, where 
$Y'$ is graphical over the entire vertical half-plane $\{z=0\}$ and passes through the fixed point $(1,\beta_*)$. 
If, for {\it any} such sequence $\beta_j$, $\calE(Y') \neq 0$, then the proof is complete. 

Otherwise, $\calE(Y') = 0$ so $Y'$ is totally geodesic and graphical over a half-plane, hence is the half-plane 
$\{z = \beta_* y, x > 0\}$. Rotating again to make this the $xy$-plane, the original graph function $u_j$ must satisfy 
$\del_y u_j(0,0) = 0$ while $\del_y u_*(0,0)\sim\alpha-\beta_*>\frac{\alpha}{2}>0$.  All of this is true for any
$\beta_* \in[\beta_-,\beta_+]$. Using Remark \ref{special.beta}, there exists a sequence $\beta_j$ such that 
$\int_{\tilde{Y}_j\cap \{1/4\le\sqrt{x^2+y^2+z^2\}}\le 4}|\Abar^j|^2d\bar{\mu}\to 0$.

By Lemma \ref{DeLell.appl}, there exists a sequence of hyperbolic isometries $\varphi_j$ such that $\varphi_j(\tilde{Y}_j)$ 
have all the properties listed there, and in particular  admit isothermal coordinates $(q_j,w_j)$ for which the conformal 
factor $\phi_j$ satisfies 
\begin{equation}
\label{known.bounds}
\begin{split}
||\nabla^{2}\phi_j||_{L^1(\varphi_n(\tilde{Y}_j))}+||\nabla\phi_j||_{L^2(\varphi_j(\tilde{Y}_j))}+||\phi_j||_{{\cal C}^0(\varphi_j(\tilde{Y}_j))} \\
\le \calE(\varphi_j(\tilde{Y}_j))+o(1)<2\e'(\zeta)+o(1).
\end{split} 
\end{equation}
Moreover, there is still a jump of $\alpha - \beta_*$ in the first derivative at the origin in these coordinates. 
The $\varphi_j(\tilde{Y}_j)$ are graphical over the disc $\{x^2+y^2\le 10, z=0\}$ (for simplicity, we denote 
the graph function by $u_j$) with $|\nabla u_j|\le 4\zeta$  and the image of the rectangle $0\le q_j\le1, |w_j|\le 1$
is entirely contained in ${\rm Graph} (u_j)$. Recall from Remark \ref{exists.extension} that the surfaces $\varphi_j(\tilde{Y}_j)$ 
admit an extension $Y^\flat_j$ which is a graph over the $xy$-plane with graph function $u_j$, where $u_j=0$ 
for $ \sqrt{x^2+y^2}\ge 50$. The bounds \eqref{known.bounds} continue to hold for this extended surface. 

Using Remark \ref{translate.bounds} and the smallness of the energy 
we derive that $|y|/ x \le 10$ and $x^2+y^2\le 10$ at all points in the sector 
\[
\calS_j:=\{ (q_j,w_j)\in Y_j, \ |w_j|/q_j \le 1, \ q^2_j+w^2_j\le 4\},
\]

We now claim that one of the following must be true:
\begin{enumerate}
\item[a)]  Either $\varphi_n(\tilde{Y}_j)$ converge to a {\it nontrivial} limit $\tilde{Y}_*$, or else
\item[b)]  there exists a sequence $\omega_j\rightarrow\infty$ such that the dilates $\omega_j\cdot \varphi_n(\tilde{Y}_j)$
converge to a  non-trivial limit $\tilde{Y}_*$.
\end{enumerate}
The theorem will be proved once we show that these are the only possibilities.

As many times before, write $\varphi_j(\tilde{Y}_j)$ as just $Y_j$. If alternative a) does not occur, then $Y_j$ converges 
to a vertical half-plane $Y_*$.  We claim that for some $\mu > 0$, there exists a sequence $P_j\in\calS_j$ such that 
either $|\nabla \phi_j|_g(P_{j}) \geq \mu$ or else $x\, |\Abar|_{\olg}(P_{j}) \geq \mu$. 

Observe that either of these two possibilities imply our theorem. Indeed, suppose the former of these is true and 
consider the dilated surfaces $\frac{1}{x(P_j)}\, Y_j$. The images 
$\tilde{P}_j$ of the points $P_j$ have height $x_j=1$ and $|y_j|\le 10$. Setting $\lambda_j:=\frac{1}{x(P_j)}$,
consider the isothermal coordinates 
\[
\tilde{q}_j(\lambda_j x, \lambda_j y) = \lambda_j q_j(x,y), \ \tilde{w}_j(\lambda_jx,\lambda_j y)=\lambda_j w_j(x,y),
\]
and the corresponding conformal factor $\tilde{\phi}_j(\lambda_jx,\lambda_jy)=\phi_j(x,y)$ on $\lambda_j Y_j$. 
Clearly, $|\nabla \tilde{\phi}_j|_{\olg}(\tilde{P}_j)\ge\mu$. Also, passing to a subsequence, $\tilde{P}_j\to \tilde{P}_*$
where $x(\tilde{P}_*)=1$, $|y(\tilde{P}_*)|\le 10$ and $|z(\tilde{P}_*)|\le 10\zeta$. 

Using the estimates in \cite[Thm.\ I.5]{riv1}, some subsequence of the surfaces $\lambda_j Y_j$ must converge, smoothly in the interior
and in $\calC^{0,\alpha}$ up to the boundary, to a surface $\tilde{Y}_*$, and this limit surface admits isothermal coordinates 
$(\tilde{q}_*,\tilde{w}_*)$ where $\tilde{q}_*=0$ on $\{x=0\}$ and $1/10\le |\tilde{q}|/|x| \le 10$.  
The convergence $(\tilde{q}_j,\tilde{w}_j,\tilde{\phi}_j)\to (\tilde{q}_*,\tilde{w}_*, \tilde{\phi})$ is smooth 
away from $\{x=0\}$. We claim that $\tilde{Y}_*$ can not be a vertical half-plane. Indeed, if it were, 
then following the same argument as in the second paragraph of \S 5, $\tilde{q}_* = Cx$ for some 
$1/10 \leq C \leq 10$, and in that case, the corresponding conformal factor 
$\tilde{\phi}_*$ would be constant. This contradicts the smooth convergence and the fact that 
$|\nabla \tilde{\phi}_j(\tilde{P}_j)|_{\olg}\ge\mu$. 


The proof that $x |\Abar|_{\olg}(P_j) \geq \mu$ implies the result is even simpler. Indeed, the same sequence
of dilations of $Y_j$ converges to a Willmore surface with $|\Abar|_{\olg}(\tilde{P}_*)\ge\mu$, and this must be nontrivial
since we know that it is graphical over the half-plane $\{z=0\}$ and hence cannot be a sphere. 

We have therefore reduced the proof to showing that conditions i) - vi) below lead to a contradiction. 
\begin{enumerate}
\item[i)] Each $Y_j$ is a graphical Willmore surface, with graph function $u_j$, over $\{x^2+y^2\le 10, x>0, z = 0\}$, 
with $\calE(Y_j) \le \e'(\zeta)$ and $\int_{Y_j}|\Abar_j|^2d\bar{\mu}\le M$ for some fixed $M<\infty$. 
The surface $Y_j$ extends to a (non-Willmore) graphical surface $Y^\flat_j$. The region 
$30\le \sqrt{x^2+y^2}$ is denoted $Y^\sharp_j$ and $u_j=0$ there. 
\item[ii)] Each $Y_j$, and its extension $Y^\flat_j$ too, admits an isothermal coordinate chart $(q_j,w_j)$ 
with conformal factor $\phi_j$ satisfying \eqref{known.bounds}.
\item[iii)] $Y_j \to Y_*:=\{x^2+y^2\le 10, z = 0\}$.
\item[iv)] The conformal factors $\phi_j$ satisfy $|\nabla\phi_j|_{g}\rightarrow 0$ uniformly 
in $\calS_j$. 
\item[v)] $x\cdot |\Abar_j|_{\olg}\to 0$ uniformly in $\calS_j$.
\item[vi)]  $|\int_0^1\partial_2e^{-\phi_j}\partial_qu_jdq_j|\ge\epsilon_0>0$. 

\end{enumerate}
 
To reach the contradiction it suffices to prove that conditions i) - v) contradict condition vi).
In other words, we need to show that i) -v) imply: 
\begin{equation}
\label{contra.ingred}
\lim_{j\to \infty} \left|\int_{(0,0)}^{(1,0)} \partial_2e^{-\phi_j}\partial_1u_j\, dq_j\right|=0.
\end{equation}

\medskip

\noindent {\it Proof of \eqref{contra.ingred}:} 
Recall that since the conformal factor $\phi_j$ is bounded, the quantities $|\partial_q|$ and $|\partial_x|$, 
and $|\partial_y|$, $|\partial_w|$ are comparable. In the following, $|\cdot|$ denotes the norm with respect 
to $dq^2 + dw^2$. In many expressions below, we suppress the subscripts $j$ for simplicity. 

The strategy is to express the second integral $\int_{(0,0)}^{(1,0)} \partial_2e^{-\phi_j}\partial_1u\, dq$ as the 
flux of the integral of a divergence over some part of the circular sector ${\cal S}_j$. Introduce 
polar coordinates $r_j=\sqrt{q^2_j+w^2_j}$ and $\theta_j$ with $\tan (\theta_j+\frac{\pi}{2})=\frac{w_j}{q_j}$,
so that ${\calS}_j:=\{0\le r_j\le 1, \pi/4\le \theta_j\le 3\pi/4\}\subset Y_j$. 
Let $\calS^l$ denote the region where $\pi/2\le \theta\le 3\pi/4$, and define $\chi^l(\theta)= (3-\frac{4\theta}{\pi})$ 
in ${\cal S}^l$.
 
By the divergence theorem,
\begin{equation}
\begin{split}
  &\int_{(0,0)}^{(1,0)} \partial_2e^{-\phi_j}\partial_1u\, dq= \int_{\calS^l}\Delta_{\olg} e^{-\phi_j}\partial_1 u\chi^l \, d\overline{\mu} +
\int_{\calS^l}\nabla_s e^{-\phi_j}\nabla^s(\partial_1u)\chi^l\, d\overline{\mu} \\
& +\frac{4}{\pi}\int_{\calS^l}e^{-2\phi_j}\del_{\theta}e^{-\phi_j}\frac{1}{r^2}\del_1u\, d\overline{\mu} + 
\int_{\pi/2}^{3\pi/4} (\partial_1u)\partial_1e^{-\phi_j}(1, \theta)\, d\theta.
\end{split}
\end{equation}
(The coefficient $\frac{4}{\pi}$ arises from $\del_\theta \chi^l$.) The final boundary term tends to zero since $Y_j$ 
converges to a vertical half-plane, so in particular $|\del u_j|, |\del \phi_j|\to 0$ away from $\{x=0\}$.

Now consider the bulk terms.  First, observe that the pointwise bounds on $\phi_j$ and on $|\nabla u|_{\olg}$ imply that 
$|\del_1 u|_{\olg}\le 3\zeta$. This uses the formula for the second fundamental form for a graph in $\RR^3$ and implies that:
\begin{equation}
\label{simple.control}
\int_{\calS^l}|\del^2 u|^2\, d\overline{\mu} \le 10 \int_{\calS^l}|\Abar|^2\, d\mu\le 10M.
\end{equation}
Using (\ref{key.eqn}) and $|\del_r u|_{\olg}\le 3\zeta$, we have
\begin{equation}
\label{old.terms1}
\begin{split}
& \left|\int_{\calS^l}\Delta_{\olg} e^{-\phi}\del_1u\chi^l\, d\bar{\mu} \right| \\
&\le 4\int_{\calS^l}e^{-\phi}\overline{H}^2|\del_1u|\, d\bar{\mu}+ \int_{\calS^l} |\Abring|^2e^{-\phi}|\del_1u|\, 
d\bar{\mu}+  \int_{\calS^l}|\nabla\phi|^2e^{-\phi}\, d\bar{\mu}. 
\end{split}
\end{equation}
Clearly,
\begin{equation} \label{easy}
\int_{\calS^l}\overline{H}^2|\partial_1u|\, d\bar{\mu}+4\int_{\calS^l}|\Abring|^2e^{-\phi}|\del_1u|\, d\bar{\mu}
\le 10 \int_{\calS^l}|\Abar|^2|\del_1u|\, d\bar{\mu}.
\end{equation}
In addition, using \eqref{known.bounds} and \eqref{simple.control},
\begin{equation}
\label{mixed}
\begin{split}
& \left|\int_{\calS^l}\nabla_se^{-\phi}\nabla^s(\partial_1u)\chi^l\, d\bar{\mu}\right|=
\left|\int_{\calS^l}e^{-2\phi}\del_se^{-\phi}\del^s(\del_1u)\chi^l\,  d\bar{\mu}\right| \\
&\le 4 \left(\int_{\calS^l} |\nabla\phi|^2\, d\bar{\mu}\right)^{\frac12} \left( \int_{\calS^l} |\del^2u|^2\, 
d\bar{\mu}\right)^{\frac12} \le 100\sqrt{M} \left(\int_{\calS^l} |\nabla\phi|^2\, d\bar{\mu}\right)^{\frac12}.
\end{split}
\end{equation}
The main issue is to control the term
\[
T_2:=\left|\int_{\calS^l}\frac{1}{r^2}e^{-2\phi}\del_{{\theta}}e^{-\phi}\del_1u\, d\bar{\mu}\right|. 
\]
Recall that by Lemma \ref{asympt.vertical}, $\del_1u=0$ on $\{q=0\}=\{x=0\}$, and also 
$\frac{|\del_1u|^2}{r^2}\le\frac{|\del_1u|^2}{q^2}$. The Hardy inequality now gives 
\begin{equation}
\label{hardy.app} 
\int_{\calS^l}\frac{|\partial_1 u|^2}{r^2}dqdw\le 10\int_{\cal B}\frac{|\partial_1 u|^2}{q^2}dqdw\le \int_D|\partial^2 u|^2dqdw\le 
100\int_{Y_j}|\Abar|^2_{\olg}d\overline{\mu}\le 100M. 
\end{equation}
where ${\cal B}:=\{0\le w\le 1, 0\le q\le 1\}$.  Thus 
\begin{equation}
\begin{split}
& T_2\le 10\int_{\calS^l} r^{-2} |\del_{\theta}\phi\del_1u|\, d\bar{\mu} \le 
10 \left( \int_{\calS^l}|\nabla \phi|_{\olg}^2\, d\bar{\mu} \right)^{\frac12} 
\left( \int_{\calS^l}  r^{-2} |\del_{1}u|^2\, d\bar{\mu}\right)^{\frac12} \\
&\le100 \left( \int_{\calS^l}|\nabla \phi|_{\olg}^2\, d\bar{\mu}\right)^{\frac12} 
\left( \int_{\cal B}|\del^2u|^2\, d\bar{\mu} \right)^{\frac12} \le 
100\sqrt{M} \left( \int_{\calS^l}|\nabla \phi|_{\olg}^2\, d\bar{\mu}\right)^{\frac12}. 
\end{split}
\end{equation}
 
We then claim that 
\begin{equation} \label{more.zero}
\lim_{j\to \infty} \int_{\calS^l}|\nabla\phi_j |_{\olg}^2\, d\bar{\mu}=0, 
\end{equation}
and 
\begin{equation}\label{2nd.zero}
\lim_{j\to \infty}\int_{\calS^l} |\Abar_j|^2|\del_1u_j|\, d\bar{\mu}=0.
\end{equation}  
These estimates will prove \eqref{contra.ingred}, and thus our theorem. 
\medskip

\noindent {\it Proof of \eqref{more.zero}:}  We assert first that on the family of lines 
$\ell_{\theta_0}:= \{0\le r\le 1, \theta=\theta_0\}$, $\frac{\pi}{2}\le\theta\le\frac{3\pi}{4}$,
there is a uniform bound 
\begin{equation}
\label{unif.bound}
\int_{\ell_{\theta_0}}|\del\phi_j| dr \le  \e'(\zeta) +M'.
\end{equation}
Before proving (\ref{unif.bound}), let us see how it proves  the estimate. 

Since $1/10\le q/x\le 10$ in $\calS^l$, we have
\[
r |\nabla\phi_j|_{\olg}\le 100 |\nabla\phi_j|_{g}
\]
in this sector, so that 
\[
\begin{split}
\int_{\calS^l}|\nabla\phi_j|_{\olg}^2\, d\bar{\mu}= & \int_{\pi/2}^{3\pi/4}\int_0^1|\del\phi_j||\del\phi_j|r\, drd\theta \\ 
& \le 100\sup_{\calS^l}|\nabla\phi_j|_{g} \sup_{\theta\in [\pi/2,3\pi/4]}\int_{\ell_{\theta}}|\del\phi_j|\, dr.
\end{split}
\]
Since the first factor tends to $0$ by the assumption v) above and the second one is bounded, we obtain \eqref{more.zero}.

\medskip

Thus matters are reduced to showing \eqref{unif.bound}. Recall from \eqref{known.bounds} that 
$||\del^2\phi_j||_{L^1(\RR^2)}\le \e'(\zeta)+o(1)$, where $\del$ is differentiation with respect to $(q_j,w_j)$.
Given any ray $\ell_{\theta_0}$, consider the right-angle rectangle $R_{\theta_0}\subset Y^\flat_j$ 
which is defined by four straight (with respect to the coordinates $q_j,w_j$) 
 line segments: $\ell_{\theta_0}$ is one segment, then $s^1,s^2$ are two line segments of length 50, emanating
  from the endpoints $(0,0)$ and $(\cos\theta_0, \sin\theta_0)$ 
 of $\ell_{\theta_0}$ and normal to it; finally $\ell'_{\theta_0}$  joins the other endpoints
  of $s^1,s^2$. Thus $\ell'_{\theta_0}$ is parallel to $\ell_{\theta_0}$ (with respect to the flat coordinates $q_j,w_j$) and lies in 
  the portion $Y^\sharp_j$ of $Y^\flat_j$ defined in Remark \ref{exists.extension}. 
   Let $n$ be the unit vector field (with respect to $dq^2 + dw^2$) normal to 
$\ell_{\theta_0}$, so $n$ is also normal to $\ell'_{\theta_0}$ and tangent to the lines $s^1, s^2$. 

Integrating $\del_n(\del\phi_j)$ over the rectangle $R_{\theta_0}$ and decomposing 
$\ell_{\theta_0}$ into sets where a given component of  $\partial\phi_j$ has 
constant sign, we obtain that 
\[
\int_{\ell_{\theta_0}}|\del\phi_j|\le \int_{R_{\theta_0}}|\del^2\phi_j|\, dqdw+ \int_{\ell'_{\theta_0}}|\del\phi_j|
\]
The first integral on the right is bounded above by \eqref{known.bounds}. We obtain a uniform upper 
bound on $\int_{\ell'_{\theta_0}}|\del\phi_j|$ by proving that $\del\phi_j$ is bounded above pointwise 
over $\ell'_{\theta_0}$, which is true because in $Y^\sharp_j$ we have $\Delta\phi_j=0$ and  $|\phi_j|$ is uniformly
bounded. This proves \eqref{unif.bound}.  \hfill $\Box$
  
\medskip

\noindent {\it Proof of \eqref{2nd.zero}:}  Recall that condition vi) implies that $\lim_{j\to\infty} \sup_{\calS^l} r|\Abar_j|=0$;
using Cauchy-Schwarz, the Hardy inequality and \eqref{simple.control}, we get
\begin{equation} \label{2nd.zeroPF}
\begin{split}
&\int_{\calS^l}|\Abar_j|^2|\partial_1u_j|\, d\bar{\mu}=
\int_{\calS^l}r|\Abar_j|\cdot |\Abar_j|\cdot \frac{1}{r}|\del_1 u_j|\, d\bar{\mu} \\
&\le \left( \sup_{\calS^l} r|\Abar_j|\right) \left( \int_{\calS^l} |\Abar_j|^2\, d\bar{\mu} \right)^{\frac12} 
\left( \int_{\calS^l} r^{-2}|\del_1 u_j|^2  \, d\bar{\mu} \right)^{\frac12} \\
& \le 10\left( \sup_{\calS^l} r|\Abar_j|\right) M\to 0.
\end{split}
\end{equation}    
This proves \eqref{contra.ingred}, and hence completes the proof of our theorem.
\end{proof}

\subsection{Finitely many bubbles}
\label{finitedistance}
We conclude by showing that there can exist at most $N$ nontrivial nonisometric limits in the sequence $Y_j$,
which completes the proof of Theorem \ref{jump.implies.bubble}. In other words, given $Y_j$ and any sequence of
isometries $\varphi_j$ such that $\varphi_j(Y_j)$ converges to a limiting Willmore surface $Y_*$ with $\calE(Y_*)> 0$,
there can be at most $N$ distinct possible limits $Y_*$. 

Arguing as usual by contradiction, assume there exist $N+1$ non-isometric limits, $Y_{*,1},\dots, Y_{*,N+1}$, 
with induced metrics  $g_{*,k}$, $k\leq N+1$. 
Since the limits $Y_{*,k}$ are non-trivial, there exist points $A_k$ on each $Y_{*,k}$ such that
the intrinsic balls $B^1_{g_{*,k}}(A_k)$ have non-zero energy. 
   
The fact that the $Y_{*,k}$ arise as limits of $\varphi_j(Y_j)$ (with $\calC^\infty$ convergence on compact sets) 
gives balls $B^1_{g_j}(C_{j,k})\subset Y_j$ with $\varphi_k(B^1_{g_j}(C_{j,k}))\to B^1_{g_{*,k}}(A_k)$. In particular we 
may assume that there is an $\epsilon>0$ such that $\calE[B^1_{g_j}(C_{j,k})]>\epsilon$ for all $j,k$.     
We then claim that for any pair of distinct values $k\ne l$ the points 
 $C_{j,k},C_{j,l}\in Y_j$ drift infinitely far apart, i.e.\ 
\begin{equation}
\label{the.claim}
{\rm dist}_{g_j}(C_{j,k},C_{j,l})\to \infty.
\end{equation} 
If we can prove this, then using the triangle inquality, there is a sequence of points $C_{j,k(j)}\in Y_j$
with $\min_{r}{\rm dist}_{g_j}(O_r, C_{j,k(j)})\to \infty$. But this cannot hold since then 
\begin{equation}
\label{lower.bd}
\begin{split}
&{\cal E}_p(Y_j)\ge \int_{B^1_{g_j}(C_{j,k})}[\min_{r}{\rm dist}_{g_j} (O_r, C_{j,k(j)})]^{2p}|A_j|^2\, d\mu_j  \\
&\ge [\min_{r}{\rm dist}_{g_j}(O_r, C_{j,k(j)})]^{2p}\epsilon^2\to\infty.
\end{split}   
 \end{equation}

We have reduced to proving \eqref{the.claim}. But if this were not true, then 
a large enough ball centered at $C_{j,k}$ must contain $C_{j,l}$, which would imply that the limit surfaces 
$Y_{*,k}$ and $Y_{*,l}$ coincide up to a hyperbolic isometry; this contradicts our assumption.

\section{Examples}
In this final section we show that the putative modes of convergence described above actually occur. 
Namely, we exhibit sequences $Y_j$ of complete, properly embedded minimal (and therefore Willmore) surfaces in $\HH^3$ 
with fixed genus which lose energy in the limit because some portions separate and disappear toward
infinity. These $Y_j$ have energy tending to zero and converge smoothly away from a finite number 
of points on the boundary curve at infinity. The limit is another complete, properly embedded surface $Y_*$, 
and we find such sequences where the genus of $Y_*$ is strictly less than that of each of the $Y_j$. 
In other words, there can be a loss of genus in the limit. The construction of these surfaces
proceeds by a fairly standard gluing result. There are many very similar ways to prove such theorems, and
we follow a method used in the papers \cite{MaS,MaMaR,MP}. Since this method is well documented in these papers,
we provide only a sketch of the argument. 
\begin{theorem}
Choose a finite number, $Y_1, \ldots, Y_k$, of complete, properly embedded minimal surfaces, each with finite energy. 
Suppose that each $\gamma_r= \del_\infty Y_r$, $r=1,\dots, k$ is a $\calC^2$ curve, and assume also that each $Y_r, r=1,\dots, k$ 
is nondegenerate in the sense that it admits no Jacobi fields which decay at $\gamma_r$. Then there is a family of complete, 
properly embedded minimal surfaces $Y_t$ with boundary curves $\del_\infty Y_t = \gamma_t$ a small perturbation of the 
unit circle. These boundary curves converge in $\calC^2$ to the unit circle away from $k$ distinct points $q_1, \ldots, q_k$. 
Furthermore, there exist rescalings of $Y_t$ at $q_j$ which converge to an isometric copy of $Y_r$. Finally, 
\[
\calE(Y_t) = \sum_{r=1}^k \calE(Y_r) + o(1)
\]
as $t \to \infty$.
\end{theorem}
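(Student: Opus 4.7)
The plan is to use a standard singular perturbation/gluing scheme, modelled on the approach taken in \cite{MaS,MaMaR,MP}. Fix a totally geodesic hemisphere $H \subset \HH^3$ with $\del_\infty H = S^1$ (the unit circle), and pick distinct points $q_1,\ldots,q_k \in S^1$. For each $r$, let $\psi_r^t$ be a sequence of hyperbolic isometries which are strong dilations fixing $q_r$ with dilation factor growing to infinity as $t \to \infty$; set $\tilde Y_r^t := \psi_r^t(Y_r)$. By construction, $\del_\infty \tilde Y_r^t$ is a curve tangent to $S^1$ at $q_r$, concentrated in a Euclidean half-ball of radius $O(t^{-1})$ about $q_r$, while $\tilde Y_r^t$ itself is a small "particle" asymptotic to $H$ away from this shrinking region.

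First I would build an approximate minimal surface $\Sigma_t$ by excising a disc of some intermediate Euclidean radius $\rho_t \to 0$ (say $\rho_t = t^{-1/2}$) about $q_r$ from $H$ and from the truncated analogue of $\tilde Y_r^t$, and gluing the two pieces via a cutoff function in the annular transition region $t^{-1} \ll |x-q_r| \ll 1$. In this region both $H$ and $\tilde Y_r^t$ are already $\calC^2$-close to a common vertical half-plane through $q_r$, so the mean curvature $H(\Sigma_t)$ is supported in the transition annuli and decays in suitable weighted norms as $t\to\infty$.

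Next, the linear step. The Jacobi operator $J_{\Sigma_t}$ must be inverted uniformly in $t$, on a function space tailored to the two-scale geometry: small bumps at each $q_r$ and a bulk hemisphere. The hemisphere $H$ is nondegenerate (its Jacobi operator on the half-ball model is explicit and has no nontrivial Dirichlet-decaying kernel), and by hypothesis each $Y_r$ admits no Jacobi field decaying at its boundary. A standard parametrix patch—inverting separately on the "model" pieces ($H$ minus $k$ discs, and each $Y_r$) in weighted Hölder spaces matching at the gluing scale—produces a right inverse for $J_{\Sigma_t}$ with norm bounded uniformly in $t$, up to possible logarithmic loss. With this in hand, the minimal surface equation $F(u) = H(\Sigma_t) + J_{\Sigma_t} u + Q(u) = 0$ for a normal graph $u$ over $\Sigma_t$ becomes a contraction whose solution produces the true minimal surface $Y_t$, close to $\Sigma_t$ in the appropriate norm.

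The conclusions then read off from the construction: $\del_\infty Y_t$ is a small $\calC^2$ perturbation of $\del_\infty \Sigma_t$, hence converges to $S^1$ in $\calC^2$ away from the $q_r$; applying $(\psi_r^t)^{-1}$ near each $q_r$ and passing to the limit recovers an isometric copy of $Y_r$; and since hyperbolic isometries preserve $\int |A|^2 d\mu$, the totally geodesic hemisphere contributes zero energy, and the transition regions contribute $o(1)$ as $t\to\infty$, we obtain $\calE(Y_t) = \sum_r \calE(Y_r) + o(1)$. The main technical obstacle throughout is the uniform linear estimate: the two-scale geometry means $\Sigma_t$ degenerates to a union of $H$ and $k$ "particles" at infinitely different scales, so keeping track of matching conditions between weighted norms at the gluing scale $\rho_t$—and in particular ensuring that the logarithmic loss (typical in codimension-one gluing problems) does not defeat the contraction—is the delicate point; the nondegeneracy hypotheses on $Y_r$ (and the fact that $H$ is modelled on a genuinely two-dimensional problem) are exactly what make this tractable.
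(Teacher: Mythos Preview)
Your overall strategy---build an approximate solution by superimposing dilated copies of the $Y_r$ on the totally geodesic hemisphere $H$, invert the Jacobi operator via a parametrix patched from the inverses on the building blocks, then run a contraction---is the same as the paper's. But two aspects of your setup are garbled and would cause trouble if carried out as written.

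First, the geometry of the dilations. You describe $\psi_r^t$ as ``dilations fixing $q_r$'' and assert that $\del_\infty \tilde Y_r^t$ is ``concentrated in a Euclidean half-ball of radius $O(t^{-1})$ about $q_r$.'' This is not the right picture: if the boundary curve collapses to $q_r$, the surface cannot be asymptotic to $H$ away from $q_r$, since $\del_\infty H = S^1$ is the full circle. The paper instead chooses \emph{two} points $p_r, q_r \in S^1$ for each $r$, a preliminary isometry $F_r$ sending two marked points of $\gamma_r$ to $p_r, q_r$, and then a loxodromic dilation $M_{r,t}$ with source $p_r$ and sink $q_r$. Under this flow, $Y_{r,t} = M_{r,t}(F_r(Y_r))$ converges to $H$ in $\calC^2$ on $\overline{\HH^3}\setminus\{q_r\}$, and $\gamma_{r,t}$ converges to the \emph{full} circle $S^1$ away from $q_r$. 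The approximate solution is then built by writing each $Y_{r,t}$ as a normal graph over $H$ on a fixed cap $B_r'$ away from $q_r$ and cutting off the graph function---no intermediate scale $\rho_t$ is needed, and the pieces are glued on regions of fixed size.

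Second, the linear analysis. You propose inverting on ``$H$ minus $k$ discs'' and on each $Y_r$ separately, with matching at scale $\rho_t$, and worry about logarithmic loss. This is the Euclidean gluing picture (polynomial Green's function decay, neck matching), and it is unnecessary here. The paper's parametrix is simply $\widetilde G_t = \sum_{r=0}^k \tilde\chi_r G_{r,t} \chi_r$, where $G_0$ is the inverse on the \emph{full} hemisphere $H$ and each $G_{r,t}$ is the transferred inverse on the full $Y_r$; the cutoffs $\chi_r$ form a partition of unity on $Y_t'$ with supports of fixed size. The error $K_t = \sum [L_t,\tilde\chi_r] G_{r,t} \chi_r$ is small because the Schwartz kernel of $G_r$ decays like $\exp(-3\,d_{Y_r}(z,z'))$ and the supports of $[L_t,\tilde\chi_r]$ and $\chi_r$ can be placed at arbitrarily large hyperbolic distance. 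There is no log loss and no delicate two-scale matching; the exponential geometry does the work.
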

\begin{proof}  There are three steps: we first construct a family of approximate solutions $Y_t'$ which 
are approximately minimal and have the stated concentration properties; we next analyze the mapping
properties of the Jacobi operators on these surfaces, focusing on estimates which are uniform in
the parameter $t$; the final step is to perturb $Y_t'$ to a minimal surface $Y_t$ when $t$ is sufficiently
large.

\medskip

\noindent {\bf Approximate solutions:}  First, choose two separate collections of points $p_1, \ldots, p_k$ 
and $q_1, \ldots, q_k$ on the unit circle $S^1$ in the boundary at infinity $\{x=0\}$, such that no two of 
these points coincide. For simplicity of notation, assume that $p_r = -q_r$ below.  Next, fix points $p_r', q_r' \in 
\gamma_r$, $r = 1, \ldots, k$, and choose a hyperbolic isometry $F_r$ which carries $p_r'$ to $p_r$ and $q_r'$ to $q_r$, and set 
$Y_r' = F_r(Y_r)$. Finally, let $M_{r,t}$ be the family of hyperbolic dilations with source $p_r$ and sink $q_r$,
and set $Y_{r,t} = M_{r,t}(Y_r')$. 

As $t \to +\infty$, the surfaces $Y_{r,t}$ converge locally uniformly in $\calC^2$ in the region $\overline{\HH^3} 
\setminus \{q_r\}$ to the totally geodesic hemisphere $H$ bounded by the unit circle, and this convergence is
$\calC^\infty$ away from $\{x=0\}$. In particular, $\gamma_{r,t} := \del_\infty Y_{r,t}$ converges in $\calC^2$ 
away from the point $q_r$. Applying the inverse dilations $M_{r,-t}$, we see that rescalings of $Y_t'$ converge to $Y_r'$, 
which is an isometric copy of $Y_r$.

For each $r$, choose a closed spherical cap $A_r$ (intersected with the half-space $x \geq 0$) centered at $q_r$ 
in the unit hemisphere $H$. We can do this so that these caps are disjoint from one another, and we then
let $B_r = H \setminus A_r$. Choose a slightly larger spherical cap $B_r' \supset B_r$, so $B_r' \cap A_r$ is diffeomorphic 
to a rectangle. Let $A_r'$ be the complement of $B_r'$ in $H$. 
By the convergence explained in the last paragraph, some portion $B_{r,t}' \subset Y_{r,t}$ 
is a normal graph over $B_r'$ with graph function $u_{r,t}$ converging to $0$ in $\calC^2(B_r') \cap \calC^\infty( B_r' 
\setminus (B_r' \cap \{x=0\})$. Finally, choose a smooth nonnegative cutoff function $\chi_r$ which
has support in $A_r \setminus (A_r \cap B_r')$ and which equals $1$ in $A_r'$. Let $Y_{r,t}'$ be the surface 
which agrees with $Y_{r,t}$ over $A_r'$ and which has graph function $\chi_r u_{r,t}$ over $B_r'$.  

By construction, each $Y_{r,t}'$ coincides with the totally geodesic hemisphere in the region $B_r$,
and this region is disjoint from all of the other regions $A_i$, $i \neq r$. This means that we may define 
the surface $Y_t'$ to be the superposition of these $k$ separate surfaces, since they all agree
on the complement of the union of the $A_r$ in the hemisphere $H$.  

Observe that these surfaces are minimal in $H \setminus (A_1 \cup \ldots \cup A_k)$ and
in $A_1' \cup \ldots \cup A_k'$, and the discrepancy from being minimal in the overlap regions
tends to $0$ as $t \to \infty$. 

\medskip

\noindent {\bf Analysis of the Jacobi operator}
Consider the Jacobi operator 
\[
L_r = \Delta_{Y_r} + |A_r|^2 - 2
\]
on the surface $Y_r$. This operator has continuous spectrum filling out the half-line $(-\infty, -9/4]$ and a finite number of
$L^2$ eigenvalues above that ray. The assumption that $Y_r$ is nondegenerate means that $L_r: H^2(Y_r) \to L^2(Y_r)$
is an isomorphism, i.e.\ $0$ is not an $L^2$ eigenvalue. It is also the case, cf.\ \cite{AM}, that under this condition, 
$L_r$ is an isomorphism on other function spaces better suited for the gluing argument. In particular, 
let $x^\delta \calC^{k,\alpha}$ denote the intrinsic H\"older space (relative to the metric on $Y_r$ induced from the 
hyperbolic metric) weighted by the function $x^\delta$, where $x$ is the upper half-space coordinate restricted to $Y_r$. 
As described carefully in \cite{AM}, if $0 < \delta < 3$, then
\[
L_r:  x^\delta \calC^{2,\delta}(Y_r) \longrightarrow x^\delta \calC^{0,\alpha}(Y_r)
\]
is an isomorphism. Denote its inverse by $G_r$. It is very important that we do not just know the existence of
this operator abstractly, but realize that it is a pseudodifferential operator for which we have a rather explicit description
of the asymptotic behaviour of its Schwartz kernel. 

Let us now define a family of weighted H\"older spaces on the surfaces $Y'_t$.  We have already defined
the cutoff functions $\chi_r$, $r = 1, \ldots, k$, and it is clearly possible to add one extra smooth nonnegative 
function $\chi_0$ which equals $1$ on $H\setminus (A_1 \cup \ldots \cup A_k)$ and is supported
away from $A_1' \cup \ldots, A_k'$, such that $\{\chi_0, \ldots, \chi_k\}$ is a partition of unity on $Y_t'$. 
(We suppress the dependence on $t$ in the $\chi_r$.) Now define
\begin{multline*}
\calC^{\ell,\alpha}_{\delta, t} (Y_t') = \{u = \sum_{r=0}^k \chi_r u_r, \ \mbox{where} \ u_r = (M_t \circ F_r)^* v_r,\ \ v_r \in 
x^\delta \calC^{\ell,\delta}(Y_r),\ r = 1, \ldots, k,\\ \mbox{and}\ u_0 \equiv v_0 \in x^\delta \calC^{\ell,\alpha}(H)\}, 
\end{multline*}
endowed with the norm
\[
||u||_{\delta,t} = \sum_{r=0}^k ||v_r||_{\ell,\alpha,\delta}.
\]
Notice that the elements of $\calC^{\ell,\alpha}_{\delta, t}(Y_t')$ coincide with those in $x^\delta \calC^{\ell,\alpha}(Y_t')$, 
but the norm 
in which there is a hidden extra $t$ dependence, so in particular this norm is not uniformly equivalent as $t \nearrow \infty$
to the standard norm on $x^\delta \calC^{\ell,\alpha}(Y_t')$, which is given by an expression similar to the one above,
but using the summands $u_r$ instead of $v_r$. 

Next, we can transfer the inverse $G_r$ on $Y_r$ using the mapping $M_t \circ F_r$ to
an operator $G_{r,t}$ on $Y_{r,t}'$, and then define
\[
\widetilde{G}_t = \sum_{r=0}^k \tilde{\chi}_r G_{r,t} \chi_r.
\]
Here each $\tilde{\chi}_r$ is a nonnegative smooth cutoff function which is equal to $1$ on the support
of $\chi_r$ and vanishes outside a larger neighbourhood.  We compute that if $L_t$ denotes the
Jacobi operator on $Y_t'$, then
\[
L_t \widetilde{G}_t = \mbox{Id} - \sum_{r=0}^k [ L_t \tilde{\chi}_r] G_{r,t} \chi_r := \mbox{Id} - K_t.
\]
The operator $K_t$ is a smoothing operator; this is because the supports of $[L_t ,\tilde{\chi}_r]$ and $\chi_r$
are disjoint from one another, and because $G_r$ is a pseudodifferential operator, the Schwartz kernel
of which is necessarily singular only along the diagonal. Moreover, it is possible to choose the supports
of these two functions, $[L_t ,\tilde{\chi}_r]$ and $\chi_r$, very far from one another. On the other hand,
the Schwartz kernel of $G_{r,t}$ has a decay profile equivalent to the one of $G_r$; namely, 
$G_r(z,z') \leq C \exp (-3 \, d_{Y_r}(z,z'))$. Taking these facts together, and arguing exactly as
in \cite{MS}, we conclude that the norm of $K_t$ as a mapping on $\calC^{\ell,\alpha}_{\delta, t}$ for
any fixed $\delta \in (0,3)$, can be made as small as desired, uniformly in $t$, by choosing the supports 
of these cutoff functions appropriately.  We conclude from this that
\[
L_t: \calC^{2,\alpha}_{\delta,t}(Y_t') \longrightarrow \calC^{0,\alpha}_{\delta, t}
\]
is an isomorphism for all $t > 0$ whenever $0 < \delta < 3$, and the norm of its inverse is uniformly 
bounded in $t$ as $t \to \infty$. 

\medskip

\noindent{\bf The  gluing construction}
If $\nu$ is the (hyperbolic) unit normal to $Y_t'$ and $\phi$ is any function on $Y_t'$, then define
the normal graph
\[
Y_{t,\phi} = \{ \exp_z( \phi(z) \nu(z)): z \in Y_t'\}.
\]
Let $\calM$ denote the minimal surface operators on $Y_t'$, i.e.\ $\calM(\phi)$ is the (hyperbolic)  mean
curvature function of $Y_{t,\phi}$, viewed as a graph over $Y_t'$. This is a second order quasilinear
operator which can be written as a small perturbation of the minimal surface operators for normal
graphs on $Y_{r,t}$ and $H$, but the main thing we need to know about it is that its linearization
at $\phi = 0$ is simply the Jacobi operator $L_t$. 

The perturbation argument is standard. Set $\calM(0) = f$. It is not hard to see that
$||f||_{0,\alpha,\delta} \to 0$ as $t \to \infty$. Expand $\calM(\phi) = 0$ as
\[
f + L_t\phi + Q_t(\phi) = 0 \Longrightarrow  L_t \phi = - f - Q_t(\phi);
\]
here $Q_t$ is quadratic remainder term involving the terms $\phi$, $\nabla \phi$
and $\nabla^2 \phi$ which satisfies
\[
||Q_t(\phi)||_{0,\alpha,\delta} \leq C ||\phi||^2_{2,\alpha,\delta}
\]
and
\[
||Q_t(\phi) - Q_t(\psi) ||_{0,\alpha,\delta} \leq C (||\phi||_{2,\alpha,\delta} + ||\psi||_{2,\alpha,\delta}) ||\phi-\psi||_{2,\alpha,\delta}. 
\]
The equation 
\[
\phi = - G_t (f + Q_t(\phi)),
\]
can then be solved using the estimates above by a straightforward contraction mapping argument. 

\medskip

It is easy from the construction to see that if $t$ is quite large, then $||\phi||_{2,\alpha,\delta}$ is
small and the surface $Y_t := Y_{t,\phi}$ is embedded. Since $\phi \to 0$ at $\del_\infty Y_t'$, we see
that the new surface has the same boundary curve at infinity.  The fact that $Y_t$ converges
in $\calC^2$ away from the points $q_1, \ldots, q_k$ follows directly from the construction. 
\end{proof}


\begin{thebibliography}{12}
\bibitem{AM} S. Alexakis, R. Mazzeo
\emph{Renormalized area and properly embedded minimal surfaces in hyperbolic 3-manifolds}  
Comm. Math. Phys. {\bf 297} (2010), no. 3, 621-–651. 


\bibitem{An1} M. Anderson \emph{$L^2$ curvature and volume renormalization of AHE metrics on 4-manifolds,} Math.
Research Lett. {\bf 8} (2001), 171--188.

\bibitem{An2} M. Anderson \emph{Topics in conformally compact Einstein metrics},
CRM Proc. Lecture Notes, (2006), 1--26.

\bibitem{CQY} S.~Y.~A.~Chang, J.~Qing, Jie, P.~Yang, \emph{On the topology of conformally compact Einstein 4-manifolds}
 Noncompact problems at the intersection of geometry, analysis, and topology, 49--61, 
Contemp. Math., 350, Amer. Math. Soc., Providence, RI, (2004).

\bibitem{Chern} S.~S. Chern \emph{Simple proofs of two theorems on minimal surfaces}
Enseig.~Math. (2) {\bf 15} 1969 53--61.

\bibitem{CS} H. Choi R. Schoen
\emph{The space of minimal embeddings of a surface into a three-dimensional manifold of positive Ricci curvature}
Invent. Math. {\bf 81} (1985), no. 3, 387--394. 

\bibitem{Co} B.~Coskunuzer, \emph{Asymptotic Plateau Problem},  arXiv:0907.0552.

\bibitem{DLM} C. De Lellis, S. M\"uller  \emph{Optimal rigidity estimates for nearly umbilical surfaces} 
J. Differential Geom. {\bf 69} (2005), no. 1, 75--110.


\bibitem{GW} C.R. Graham, E. Witten \emph{Conformal Anomaly Of Submanifold Observables In
AdS/CFT Correspondence}  Nucl.Phys. B {\bf 546} (1999) 52--64.

\bibitem{HL} R. Hardt,  F. Lin \emph{Regularity at infinity for area minimizing hypersurfaces
in hyperbolic space,} Invent. Math. {\bf 88}, (1987) 217-224.

\bibitem{H} F.~H\'elein \emph{Harmonic maps, conservation laws and moving frames}
 Cambridge Tracts in Mathematics, {\bf 150} Cam.~Univ.~Press, (2002).

\bibitem{Jost} J.~Jost \emph{Riemannian geometry and geometric analysis}, fourth edition. Universitext. Springer, Heidelberg, 2005.

\bibitem{KS} N.~Korevaar, L.~Simon, \emph{Equations of mean curvature type with contact angle boundary conditions},
 Geometric analysis and the calculus of variations, 175--201, Int. Press, Cambridge,  (1996).


\bibitem{KuS}  E.~Kuwert, R.~Sch\"atzle \emph{Removability of point singularities of Willmore surfaces},
Ann.~of Math.~(2)  {\bf 160} (2004),  315--357.


\bibitem{Li} Y. Li, \emph{Weak limit of an immersed surface sequence with bounded Willmore functional}, arXiv:1109.1472. 

\bibitem{Lin} F.H. Lin \emph{On the Dirichlet problem for the minimal graphs in hyperbolic space,} Invent.
Math. {\bf 96}, 593-612 (1989).

\bibitem{MaMaR} F. Martin, R. Mazzeo and M. Rodriguez \emph{Minimal surfaces with positive genus and finite
total curvature in $\HH^2\times \RR$}, to appear, Geom. and Top. 

\bibitem{MP} R.~Mazzeo, F. Pacard  \emph{Maskit combinations of Poincar\'e-Einstein metrics} 
Adv. Math. {\bf 204} (2006), no. 2, 379--412.

\bibitem{MaS} R.~Mazzeo, M.~Saez \emph{Multiple-layer solutions to the Allen-Cahn equation on hyperbolic space}
 arXiv:1201.6170.

\bibitem{MR} A.~Mondino, T.~Rivi\`ere
\emph{Willmore Spheres in Compact Riemannian Manifolds}, arXiv:1203.5501.

\bibitem{Morrey} C.~B.~Morrey  \emph{Multiple integrals in the calculus of variations}  Classics in Mathematics. Springer-Verlag, 
Berlin, 2008. 

\bibitem{MS}  S. M\"uller, V. Sver\'ak \emph{On surfaces of finite total curvature} J. Differential Geom. {\bf 42} (1995), no. 2, 229--258. 

\bibitem{riv1}  T.~Rivi\`ere, \emph{Analysis aspects of Willmore surfaces}, Invent. Math. {\bf 174} (2008), no. 1, 1--45. 

\bibitem{rivICM} T.~Rivi\`ere \emph{Bubbling and regularity issues in geometric non-linear analysis} 
Proc.~ICM, Vol. III 197--208, Higher Ed. Press, Beijing, (2002).

\bibitem{Ros} A.~Ros  \emph{Compactness of spaces of properly embedded minimal surfaces with finite total curvature}
 Ind.~Univ. Math.~J. {\bf 44} (1995), no. 1, 139--152.

\bibitem{SU}  J.~Sacks, K.~Uhlenbeck, \emph{The existence of minimal immersions of 2-spheres} Ann.~of~Math. (2) {\bf 113} (1981), 
no. 1, 1--24.

\bibitem{Si} L. Simon \emph{Existence of surfaces minimizing the Willmore functional}, Comm. Anal. Geom.
{\bf 1} (1993), no. 2, 281-326.

\bibitem{To} Y. Tonegawa, \emph{Existence and regularity of constant mean
curvature hypersurfaces in hyperbolic space,} Math. Z. {\bf 221},  (1996) 591-615.

\end{thebibliography}
\end{document}